\theoremstyle{plain}
\newtheorem{theorem}{Theorem}[section]
\newtheorem{cor}[theorem]{Corollary}
\newtheorem{prop}[theorem]{Proposition}
\newtheorem{lemma}[theorem]{Lemma}
\theoremstyle{definition}
\newtheorem{definition}[theorem]{Definition}
\newtheorem{example}[theorem]{Example}
\newtheorem{remark}[theorem]{Remark}
\newtheorem{construction}[theorem]{Construction}
\newtheorem{notation}[theorem]{Notation}
\numberwithin{equation}{section}
\def\on{\operatorname}
\newcommand{\Aut}{\operatorname{Aut}}
\newcommand{\Band}{\operatorname{Band}}
\DeclareMathOperator*{\colim}{colim}
\newcommand{\Cyc}{\operatorname{Cyc}}
\newcommand{\Deltaop}{\Delta^{\op}}
\newcommand{\Equiv}{\operatorname{Equiv}}
\newcommand{\Ext}{\operatorname{Ext}}
\newcommand{\Ho}{\operatorname{Ho}}
\newcommand{\Hom}{\operatorname{Hom}}
\newcommand{\Homeo}{\operatorname{Homeo}}
\newcommand{\id}{\operatorname{id}}
\newcommand{\inj}{\operatorname{inj}}
\newcommand{\Iso}{\operatorname{Iso}}
\newcommand{\Lambdaop}{\Lambda^{\op}}
\newcommand{\Map}{\operatorname{Map}}
\newcommand{\Mon}{\operatorname{Mon}}
\newcommand{\ob}{\operatorname{ob}}
\newcommand{\op}{\operatorname{op}}
\newcommand{\proj}{\operatorname{proj}}
\newcommand{\Reedy}{\operatorname{Reedy}}
\newcommand{\Seg}{\operatorname{Seg}}
\newcommand{\Set}{\mathcal{S}\!\operatorname{et}}
\newcommand{\Sets}{\mathcal Set}
\newcommand{\SSets}{s\mathcal{S}\!\on{et}}
\newcommand{\Top}{\mathcal Top}
\def\ZZ{\mathbb{Z}}
\begin{document}

\title{Cyclic Segal spaces}

\author[J.E.\ Bergner]{Julia E.\ Bergner}
\author[W.H.\ Stern]{Walker H.\ Stern}

\address{Department of Mathematics, University of Virginia, Charlottesville, VA 22904}

\email{jeb2md@virginia.edu}
\email{walker@walkerstern.com}

\date{\today}

\renewcommand{\subjclassname}{\textup{2020} Mathematics Subject Classification}

\thanks{The first-named author was partially supported by NSF grant DMS-1906281. The second-named author was supported by the NSF Research Training Group at the University of Virginia under grant DMS-1839968.}

\begin{abstract}
In this survey article, we review some conceptual approaches to the cyclic category $\Lambda$, as well as its description as a crossed simplicial group.  We then give a new proof of the model structure on cyclic sets, work through the details of the generalized Reedy structure on cyclic spaces, and introduce model structures for cyclic Segal spaces and cyclic 2-Segal spaces.
\end{abstract}

\maketitle

\section{Introduction}

The cyclic category $\Lambda$, originally defined by Connes \cite{connes}, has been a useful framework for describing many different mathematical structures.  It can be described as a variant of the simplex category $\Delta$ with extra morphisms, or can be described in the framework of crossed simplicial groups, which was independently introduced in \cite{fl} and \cite{kras}.

More recently, cyclic structures have been studied within the context of 2-Segal spaces, as defined by Dyckerhoff and Kapranov \cite{dk}. While a (1-)Segal space roughly describes the structure of an up-to-homotopy topological category, in which both the set of objects and the set or morphisms is equipped with the structure of a topological space, a 2-Segal space encodes a weaker structure, in which composition need not be defined, or be unique (even up to homotopy) when it does exist.  Nonetheless, this composition is still associative, in an appropriate sense, and the structure of a 2-Segal space can be described via explicit combinatorial data.  

Since Segal spaces and 2-Segal spaces are given by simplicial spaces, or functors from $\Deltaop$ to the category of spaces, we can likewise consider cyclic Segal and 2-Segal spaces as functors out of $\Lambdaop$ instead.  Cyclic 2-Segal spaces are of significant interest; for instance, they provide space-valued invariants of oriented marked surfaces, as constructed in \cite[\S V.2]{dkcsg}, as well as Calabi-Yau algebras in span categories, as shown in \cite{stern}. 

This work is preliminary to providing an answer to the following question: which cyclic 2-Segal spaces arise as the output of an $S_\bullet$-construction?  For 2-Segal spaces, there is a Quillen equivalence of model categories between the model structure for 2-Segal spaces and a model structure for augmented stable double Segal spaces \cite{boors}.  A natural follow-up is then to identify which augmented stable double Segal spaces correspond to cyclic 2-Segal spaces.  We anticipate that they are described by an up-to-homotopy version of the symmetric double categories that we have shown correspond to cyclic 2-Segal sets in \cite{BSupcoming}.  Our next goal is to establish a model structure for them, and then show that the same functors used for 2-Segal spaces, and in particular the one given by a generalization of Waldhausen's $S_\bullet$-construction, give a Quillen equivalence between them.

One of the important features of cyclic spaces is that they correspond to spaces equipped with a circle action.  Thus, one motivation, aside from understanding the correspondence just described, is to give new insight into $S^1$-equivariant $K$-theory, given the central role that the $S_\bullet$-construction plays in algebraic $K$-theory.

The goal of this mostly expository paper is twofold.  First, we give an introduction to some of the different approaches to defining and understanding the category $\Lambda$, often in parallel with the analogous perspective applied to the simplicial category $\Delta$.  Second, we describe model structures on the categories of cyclic sets and cyclic spaces, culminating in appropriate model structures for cyclic Segal spaces and cyclic 2-Segal spaces.  

In the remainder of this introduction, we give a brief introduction to these ideas.

\subsection{The idea of the cyclic category} 

Our aim in this survey is to give an introductory treatment of cyclic structures, culminating in a theory of cyclic Segal spaces.  To reach the first part of this goal, we want to give multiple approaches to understanding Connes' cyclic category $\Lambda$, which was originally introduced in \cite{connes} to study cyclic cohomology, and the first few sections of this paper are devoted to a thorough understanding of this category.  Although Connes' original approach to the cyclic category was topological, which is the same perspective taken here, in practice the cyclic category often appears in the literature in terms of a more compact description via generators and relations. 

While we do, eventually, arrive at this generators-and-relations presentation, we do so after a leisurely tour through topology, first defining $\Lambda$ in terms of oriented circles and homotopy classes of maps between them, and then passing to a more combinatorial description. In this portion of the paper, we draw upon the approaches of Connes \cite{connes} and Dyckerhoff and Kapranov \cite[\S II]{dkcsg}, and in particular we aim to make more transparent the topological intuitions that underlie the category $\Lambda$.  

To arrive at the topological description of the cyclic category, we it is helpful first to understand how to reformulate the simplex category $\Delta$ in an analogous topological fashion. Fundamentally, the category $\Delta$ is the category of total orders, and it is this notion that we would like to encode topologically. To do so, we can equip a finite set $S$ with a linear order by embedding it in the oriented unit interval, as in Figure \ref{fig:points_on_interval}. 
\begin{figure}[h!]
	\begin{tikzpicture}
		\draw[(-)] (0,0) to (4,0);
		\foreach \x/\lab in {0.5/0,1/1,1.5/2,2/3,2.5/4,3/5,3.5/6}{
		\path[fill=red] (\x,0) circle (0.05);
		\path[red] (\x,0) node[label=above:$s_\lab$] {};
			};
	\end{tikzpicture}
	\caption{A linearly ordered set embedded in the open interval.}\label{fig:points_on_interval}
\end{figure}
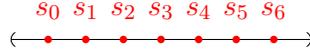  
If we are given two ordered sets $S$ and $T$, considered as subsets of the oriented unit interval, we can consider a self-homeomorphism of the unit interval $f \colon I\to I$ that sends $S$ into $T$. If we require that $f$ is also orientation-preserving, or equivalently, strictly increasing, then $f$ induces an injective monotone map $S\to T$.  We can thus write an injective morphism in $\Delta$ in terms of homeomorphisms of intervals. 

Unfortunately, we cannot obtain the non-injective maps of $\Delta$ in this framework.  We can fix this problem in two ways:  we can either relax the requirement that $f$ be increasing and instead ask that $f$ be monotone, i.e., non-decreasing; or we can replace the points above with intervals in $I$.  While we eventually discuss both of these potential solutions, we focus on the second here.   We can think of a totally ordered set as a collection of sub-intervals with disjoint closures in the oriented unit interval $I$, as pictured in Figure \ref{fig:sub_intervals}. 
\begin{figure}[h!]
	\begin{tikzpicture}
		\draw[(-)] (0,0) to (4,0);
		\foreach \x/\lab in {1/0,2/1,3/2}{
			\draw[red,(-)] ($(\x,0)-(0.4,0)$) to ($(\x,0)+(0.4,0)$);
			\path[red] (\x,0) node[label=above:$s_\lab$] {};
		};
	\end{tikzpicture}
\caption{Subintervals embedded in the unit interval.} \label{fig:sub_intervals}
\end{figure}
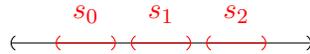
This additional data complicates the situation somewhat, since the ordered set we extract from such a collection of intervals $S\subseteq I$ is not simply the set $S$, but rather the set $\pi_0(S)$ of path-components of $S$. However, in exchange for this slight complication, we gain the ability to represent any monotone map in terms of homeomorphisms of the interval. 

If we have two such collections $S$ and $T$ of sub-intervals, any orientation-preserving homeomorphism $f \colon I\to I$ that sends $S$ into $T$ induces a monotone map $\pi_0(f) \colon \pi_0(S)\to \pi_0(T)$. Moreover, one can check that every monotone map $\pi_0(S)\to \pi_0(T)$ arises from such an orientation-preserving homeomorphism. 
\begin{figure}
	\begin{tikzpicture}
		\draw[(-)] (0,0) to (4,0);
		\foreach \x/\lab in {1/0,2/1,3/2}{
			\draw[red,(-)] ($(\x,0)-(0.4,0)$) to ($(\x,0)+(0.4,0)$);
			\path[red] (\x,0) node[label=above:$s_\lab$] {};
		};
		\draw[(-)] (0,-2) to (4,-2);
		\draw[(-),purple] (0.6,-2) to (0.9,-2);
		\path[purple] (0.75,-2) node[label=below:$t_0$] {};
		\draw[(-),purple] (1.1,-2) to (1.4,-2);
		\path[purple] (1.25,-2) node[label=below:$t_1$] {};
		\draw[(-),purple] (2.6,-2) to (3.4,-2);
		\path[purple] (3,-2) node[label=below:$t_2$] {};
		\draw[->] (2,-1.5) to node[label=left:$f$] {} (2,-0.5);
	\end{tikzpicture}
\caption{A depiction of a morphism in $\Delta_{\Top}$. For ease of visualization, the underlying homeomorphism of intervals is the identity. }\label{fig:map_in_Delta_as_subintervals}
\end{figure}
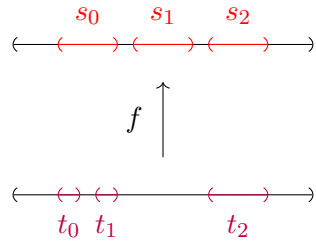
For example, the map depicted in Figure \ref{fig:map_in_Delta_as_subintervals} induces the monotone map that sends $t_0$ and $t_1$ to $s_0$, and sends $t_2$ to $s_2$.  Formalizing this construction allows us to define a topological category $\Delta_{\Top}$ whose objects are pairs $(I,S)$ and whose mapping spaces are spaces of self-homeomorphisms of intervals that preserve the chosen subsets. This topological category turns out to have contractible mapping spaces, and its homotopy category is equivalent to the simplex category $\Delta$. 

Now that we have a topological model for the simplex category, we can try to construct the cyclic category by first constructing the corresponding topological category and then taking its homotopy category. The natural question to ask, then, is ``with what should we replace the oriented interval?".  Since we want the cyclic category to encode cyclic group actions, which we can visualize as rotational symmetries, there is a natural choice: the circle $S^1$. 

Following our work with the simplex category, we can na\"ively extend to intervals on the circle.  We can define a topological category $\Lambda_{\Top}$, whose objects are pairs $(S^1,J)$ of spaces in which $J\subseteq S^1$ is a finite collection of open intervals with disjoint closures, and whose morphisms $f \colon (S^1,J)\to (S^1,K)$ are orientation-preserving homeomorphisms $f \colon S^1 \to S^1$ that map $J$ into $K$. 

Similarly to before, we can picture such morphisms, now viewing $f$ as mapping points radially outward, as in Figure \ref{fig:cyc_cat_morph}. 
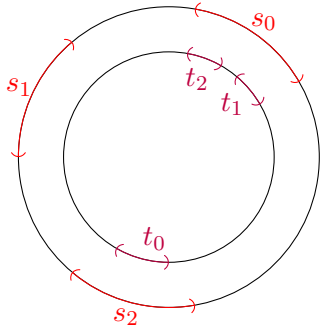
\begin{figure}[h!]
	\begin{tikzpicture}
		\draw (0,0) circle (2);
		\draw[red,(-)] (30:2) arc (30:80:2);
		\path[red] (55:2.2) node {$s_0$};
		\draw[red,(-)] (130:2) arc (130:180:2);
		\path[red] (155:2.2) node {$s_1$};
		\draw[red,(-)] (230:2) arc (230:280:2);
		\path[red] (255:2.2) node {$s_2$};
		\draw (0,0) circle (1.4);
		\draw[purple,(-)] (30:1.4) arc (30:50:1.4);
		\path[purple] (40:1.1) node {$t_1$};
		\draw[purple,(-)] (60:1.4) arc (60:80:1.4);
		\path[purple] (70:1.1) node {$t_2$};
		\draw[purple,(-)] (240:1.4) arc (240:270:1.4);
		\path[purple] (260:1.1) node {$t_0$};
	\end{tikzpicture}
	\caption{A morphism in the cyclic category, pictured in terms of the radial projection from the interior circle to the exterior circle.}\label{fig:cyc_cat_morph}
\end{figure}
The cyclic category $\Lambda$ is then defined to be the corresponding homotopy category. 

We summarize the correspondence between topological and combinatorial constructions in the following table. 
\begin{center}
	\begin{tabular}{c | p{0.6\linewidth}}
		\textbf{Combinatorial} & \textbf{Topological} \\ \hline 
		Linear order & Oriented line\\
		Cyclic order & Oriented circle\\
		Ordered set & Collection of subintervals\\
		Order-preserving map & Homotopy class of orientation-preserving homeomorphisms
	\end{tabular}
\end{center}

The more general framework of $G$-structured circles as described by Dyckerhoff and Kapranov in \cite[\S II]{dkcsg} can be used to apply the same basic intuition to access a more general class of orders, corresponding to planar Lie groups.  The perspective described above is sufficient for our purposes here.

\subsection{Cyclic objects and model structures}

Let us now turn to simplicial sets, given by functors $\Deltaop \rightarrow \Sets$, and cyclic sets $\Lambda \rightarrow \Sets$. Although every cyclic set has an underlying simplicial set, induced by the inclusion functor $\Deltaop \rightarrow \Lambdaop$, the two are quite different from one another, as can be highlighted by taking geometric realization.  

In particular, the geometric realization of a cyclic set is generally much larger than the geometric realization of its underlying simplicial set.  For example, the geometric realization of $\Delta[0]$, the simplicial set taking every object $[n]$ of $\Delta$ to a singleton set, is a single point.  On the other hand, the geometric realization of $\Lambda[0]$, again a constant diagram to a one-point set, is homotopy equivalent to a circle.  More generally, the geometric realization of the simplicial $n$-simplex $\Delta[n]$ is the topological $n$-simplex $\Delta^n$, while the geometric realization of the cyclic $n$-simplex $\Lambda[n]$ is $\Delta^n \times S^1$.  In particular, the former is a contractible space, while the latter is not.  Such subtleties carry over to the study of cyclic spaces.

In this paper, we give a new proof of a theorem of Dwyer, Hopkins, Kan \cite{dhk} that there is a model structure on the category of cyclic sets, and we recall its relationship to the category of topological spaces with an $S^1$-action.  We then give several model structures on the category of cyclic spaces.  First, we consider model structures in which the weak equivalences are levelwise weak equivalences of simplicial sets, given by standard approaches to model structures on categories of diagrams.  In the projective model structure, fibrations are taken to be levelwise fibrations of simplicial sets, while in the injective model structure the cofibrations are given levelwise.  Because $\Lambdaop$ has the structure of a generalized Reedy structure, in the sense of Berger and Moerdijk \cite{bm}, the category of cyclic spaces also has a generalized Reedy structure.  Although the existence of this model structure is a consequence of their general theorems, here we give an explicit description for cyclic spaces and specify generating sets of cofibrations and acyclic cofibrations.

We then give model structures on the category of cyclic spaces in which the fibrant objects are cyclic Segal spaces, each given by a localization of one of the above model structures with respect to a cyclic analogue of the spine inclusions used by Rezk to describe his model structure for Segal spaces \cite{rezk}.  Finally, in a similar way we produce model structures whose fibrant objects are cyclic 2-Segal spaces, analogously to the 2-Segal space model structure of Dyckerhoff and Kapranov \cite{dk}.

\subsection{Connections with $A_\infty$-structures} 

Although it is not our primary perspective here, we comment briefly here on the relationship between some of the structures here and $A_\infty$-structures.  

Recall that one model of the topological $A_\infty$-operad, which governs associative structures on spaces, is given by the ``little intervals" operad.  The reader familiar with $A_\infty$-algebras may recognize a similar phenomenon occurring in our presentation of $\Delta$ in terms of marked intervals.  There is a close relationship here, although we do not discuss it further here.

From another angle, composition in a Segal space or 2-Segal space is not required to be strictly associative, but rather coherently associative in a manner very much like that given by an $A_\infty$-structure.  Modeling such a coherent associativity was a key goal in the original definition of Segal spaces, but can also be understood as a motivation for the definition of 2-Segal spaces, as one aim of both \cite{dk} and \cite{gkt} is to study higher categorifications of algebraic structures, such as Hall algebras in the former and incidence coalgebras in the latter.  

\subsection{Outline of the paper}

In Section \ref{finitesubsetmodel} we describe our topological version of the simplex category and show that its homotopy category recovers the usual simplex category $\Delta$, and then in Section \ref{finitesubsetLambda} we use similar methods to give a topological version of the cyclic category $\Lambda$.  We take an alternate approach to these constructions and discuss duality in Sections \ref{intervalmodelDelta} and \ref{intervalmodelLambda}.  In Section \ref{csg} we investigate the category $\Lambda$ from the perspective of crossed simplicial groups.  Turning then to model structures, we give a new proof of the model structure on cyclic sets in Section \ref{cyclicsetmc}.  In Section \ref{levelwisemcs} we consider several model structures on cyclic spaces with levelwise weak equivalences that we localize in Section \ref{Segalmcs} and \ref{2Segalmcs} to obtain model structures for cyclic Segal and 2-Segal spaces.

\section{A finite subset model for the simplex category}  \label{finitesubsetmodel}

Taking the topological perspective that we began to describe in the introduction, our first goal is to give a definition of the simplex category in which the objects are discrete subsets of open intervals.  Before we begin, let us fix some notation that we use throughout. 

\begin{definition}
    An \emph{interval space} is one of the spaces $I=[0,1], I^\circ =(0,1), [0,1)$, and $(0,1]$. 
\end{definition} 

We equip these interval spaces with the orientation induced by the canonical orientation on $\mathbb R$, and we similarly regard the unit circle $S^1$ with its usual orientation. For topological spaces $X$ and $Y$, we denote by $C^0(X,Y)$ the space of continuous maps $X \rightarrow Y$, equipped with the compact-open topology. 

For our desired topological description of the simplex category, we do not want to consider arbitrary finite subsets of $I^\circ$,  but instead designate a fixed subset $\underline{n}$ of cardinality $n+1$ for every $n \geq 0$. The benefit of this approach is that its homotopy category is then isomorphic to the usual simplex category.  

% once we pass to the homotopy category, we obtain a category that is actually isomorphic to the usual simplex category, rather than having to pass to a skeletal version.  

\begin{definition}
    For $n\geq 0$, the \emph{standard $(n+1)$-pointed interval} is the pair of topological spaces $(I^\circ,\underline{n})$, where $\underline{n} \subseteq I^\circ$ is the discrete subspace 
    \[ \underline{n}:= \left\{\frac{1}{n+2},\ldots, \frac{n+1}{n+2} \right\}. \] 
    We denote the point $\frac{k+1}{n+2} \in \underline{n}$ by $k\in I^\circ$. 
\end{definition}

With these pairs as objects, we need to specify the morphisms.  As a consequence of our choice to work with discrete subsets, we need to consider maps that are less restrictive than self-homeomorphisms of the interval; we use monotone maps of intervals, which are necessarily homotopy equivalences. 

\begin{definition}
    For interval spaces $X$ and $Y$, a continuous map $f \colon X \rightarrow Y$ is \emph{monotone} if $f(x)\leq f(z)$ in $Y$ whenever $x \leq z$ in $X$. We denote by 
    \[ \Equiv^+(X,Y) \subseteq C^0(X,X) \]
    the space of monotone homotopy equivalences.  We denote by 
    \[ \Equiv^+((I^\circ,\underline{m}),(I^\circ,\underline{n}))\subseteq \Equiv^+(I^\circ,I^\circ) \]
    the subspace consisting of maps of pairs. 
%	A morphism 
%	\[ \begin{tikzcd}
%		(I^\circ,\underline{n}) \arrow[r] & (I^\circ,\underline{m})
%	\end{tikzcd} \]
%	a \emph{pointed monotone homeomorphism of pairs} if the underlying map is a monotone homeomorphism. We denote by
%	\[ \Equiv^+((I^\circ, \underline{n}), (I^\circ,\underline{m})) \subseteq C^0(I^\circ, I^\circ) \]
%	the space of pointed monotone maps of pairs whose underlying continuous maps are homotopy equivalences.
\end{definition}

\begin{remark}
    There are several ways to draw a morphism 
    \[ f\in \Equiv^+((I^\circ,\underline{m}),(I^\circ, \underline{n})) \] 
    schematically; we describe two of them here. On the one hand, we can draw the graph of the morphism in $I^\circ \times I^\circ$, and on the other, we can draw the mapping cylinder, together with lines indicating the image of the marked points in $I^\circ\times I$. For example, the two pictures in Figure \ref{fig:graph_and_map_cylinder_2_to_3} can both be used to represent the same piecewise linear topological map $f \colon (I^\circ,\underline{2})\to (I^\circ,\underline{3})$.
    \begin{figure}[h!]
		\begin{tabular}{ c  c  c}
			\textbf{Graph} & \phantom{middle} & \textbf{Mapping Cylinder} \\ 
			
			\begin{tikzpicture}
				\path[white] (0,4.2) circle (0.05);
				\draw (0,0) rectangle (4,4);
				\foreach \x/\lab in {1/0,2/1,3/2}{
				\path[fill=blue] (\x,0) circle (0.05);
				\path[blue] (\x,0) node[label=below:$\mathbf{\lab}$] {};
				};
			\foreach \y/\lab in {0.8/0,1.6/1,2.4/2,3.2/3}{
				\path[fill=blue] (0,\y) circle (0.05);
				\path[blue] (0,\y) node[label=left:$\mathbf{\lab}$] {};
			};
			\draw[red] (0,0) to (1,1.6) to (2,1.6) to (3,3.2) to (4,4);
			\draw[dashed, blue] (1,0) to (1,1.6) (2,0) to (2,1.6) (3,0) to (3,3.2); 
			\end{tikzpicture} &  &
			\begin{tikzpicture}
				\draw (0,0) rectangle (4,4); 
				\path[white] (2,0) node[label=below:$0$]{};
				\foreach \y/\lab in {0.8/0,1.6/1,2.4/2,3.2/3}{
					\path[fill=blue] (4,\y) circle (0.05);
					\path[blue] (4,\y) node[label=right:$\mathbf{\lab}$] {};
				};
			\foreach \x/\lab in {1/0,2/1,3/2}{
				\path[fill=blue] (0,\x) circle (0.05);
				\path[blue] (0,\x) node[label=left:$\mathbf{\lab}$] {};
			};
			\draw[red] (0,1) to (4,1.6) (0,2) to (4,1.6);
			\draw[red] (0,3) to (4,3.2); 
			\end{tikzpicture}
		\end{tabular}
		\caption{Graph an mapping cylinder depictions of the same map of marked intervals.} \label{fig:graph_and_map_cylinder_2_to_3}
	\end{figure}
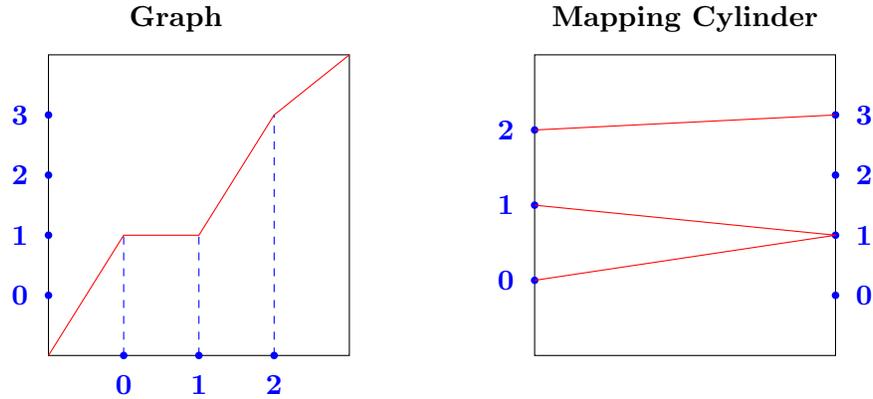
    While the former has the benefit of possibly being more familiar, the latter better resembles the pictures one might draw to represent maps in the simplex category. 
\end{remark}

We now are able to give our chosen definition of the simplex category.

\begin{definition} \label{topsimplexcat}
    The \emph{topological simplex category} is the topological category $\Delta_{\Top}$ with objects $(I^\circ,\underline{n})$ for each $n\geq 0$ and morphism spaces
    \[ \Map_{\Delta_{\Top}}((I^\circ, \underline{n}), (I^\circ, \underline{m}))= \Equiv^+((I^\circ, \underline{n}), (I^\circ,\underline{m})). \]
    The \emph{simplex category} $\Delta$ is the homotopy category of $\Delta_{\Top}$, defined by taking path components of mapping spaces.
\end{definition}

We now want to demonstrate that this definition is a sensible one for the simplex category, for which we recall its the usual combinatorial definition.

%To finalize this as a reasonable definition of the simplex category, we need one further proposition, which follows as a corollary of Lemma \ref{lem:equivcontract}. 

\begin{definition}
    For any $n \geq 0$, the \emph{standard ordinal} $[n]$ is the totally ordered set 
    \[ [n]= \{0<1<\cdots<n\}. \]
    The simplex category $\Delta$ the category whose objects are the standard ordinals and whose morphisms are the monotone maps between them.  We denote the set of monotone maps from $[n]$ to $[m]$ by $\Mon([n],[m])$.  
\end{definition}

We want to establish an isomorphism between the category given by this definition and the one given in Definition \ref{topsimplexcat}.  The first step is the following lemma.

\begin{lemma} \label{lem:equivcontract} 
    For any interval space $X$, the space $\Equiv^+(X,X)$ is contractible. 
\end{lemma}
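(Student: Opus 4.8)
The plan is to show that $\Equiv^+(X,X)$ is contractible by exhibiting an explicit deformation retraction onto the identity map, using a straight-line (convex combination) homotopy. Since $X$ is an interval space — a convex subset of $\mathbb{R}$ — for any two points $f(x), g(x) \in X$ the segment between them lies in $X$, so for $f \in \Equiv^+(X,X)$ we may define $H \colon \Equiv^+(X,X) \times [0,1] \to C^0(X,X)$ by $H(f,t)(x) = (1-t)f(x) + t\cdot x$. Then $H(f,0) = f$ and $H(f,1) = \id_X$, and $H$ is continuous for the compact-open topology because the straight-line homotopy is jointly continuous in $(f,t,x)$. The first substantive point is that $H(f,t)$ actually lands in $\Equiv^+(X,X)$: monotonicity is preserved because a convex combination of two non-decreasing functions is non-decreasing (if $x \le z$ then $(1-t)f(x) + tx \le (1-t)f(z) + tz$), and $H(f,t)$ remains a homotopy equivalence because any monotone self-map of an interval space is automatically a homotopy equivalence (the source spaces $I, I^\circ, [0,1), (0,1]$ are all contractible, so any self-map is a homotopy equivalence regardless — here one should be a little careful, but since we only need $H(f,t)$ to be a homotopy equivalence and every interval space is contractible, any continuous self-map qualifies).

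Concretely I would structure the argument in three steps. First, verify that the formula $H(f,t)(x) = (1-t)f(x) + tx$ defines a point of $C^0(X,X)$ for each $(f,t)$: continuity of $H(f,t)$ as a map $X \to X$, and the fact that its values stay in $X$ by convexity of interval spaces. Second, verify $H(f,t) \in \Equiv^+(X,X)$: the monotonicity check above, plus the observation that interval spaces are contractible so the homotopy-equivalence condition is automatic. Third, verify that $H$ itself is continuous as a map $\Equiv^+(X,X) \times [0,1] \to \Equiv^+(X,X)$; this is where one uses that the compact-open topology on mapping spaces out of a locally compact Hausdorff space behaves well, so that an evaluation-continuous, jointly-continuous formula like a convex combination yields a continuous map on function spaces. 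Together these give that $H$ is a homotopy from $\id_{\Equiv^+(X,X)}$ to the constant map at $\id_X$, so $\Equiv^+(X,X)$ is contractible.

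I expect the main obstacle to be the continuity of $H$ in the compact-open topology — not because it is deep, but because it is the only step that is not a one-line convexity observation and requires invoking the exponential/adjunction properties of the compact-open topology (e.g., that for locally compact Hausdorff $X$, a map $Z \times X \to X$ is continuous iff its adjoint $Z \to C^0(X,X)$ is). A secondary subtlety worth flagging explicitly is that $H(f,t)$ need not be a *homeomorphism* even when $f$ is — but this is exactly why the definition uses $\Equiv^+$ (monotone homotopy equivalences) rather than orientation-preserving homeomorphisms, so no problem arises; still, one should remark on it so the reader sees why the weaker morphism spaces were the right choice. If one preferred to avoid compact-open topology technicalities altogether, an alternative is to note that $\Equiv^+(X,X)$ with this straight-line homotopy is a contractible topological monoid under composition, but the direct deformation-retraction argument above is the cleanest.
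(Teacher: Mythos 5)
Your proposal is correct and is exactly the paper's argument: the paper's proof consists of the single sentence ``Straight-line homotopies provide an explicit contraction of the space $\Equiv^+(X,X)$,'' and you have simply filled in the details (convexity, preservation of monotonicity, automatic homotopy-equivalence for self-maps of a contractible space, and compact-open continuity). No changes needed.
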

 
% \begin{proof}
%  	Let $\id_X \colon X \rightarrow X$ denote the identity on $X$. Consider the straight-line homotopy 
%  	\[ \begin{tikzcd}[row sep=0em]
%  		H \colon &[-3em] \Equiv^+(X,X)\times I \arrow[r] & \Equiv^+(X,X)\\
%  		& (f,t) \arrow[r,mapsto] & (1-t)f+t\id_X 
%  	\end{tikzcd}  \] 
%  	Since $f$ and $\id_X$ are monotone, every intermediate map $(1-t)f+t(\id_X)$ is monotone, since, for $x\leq y$ 
%  	\[ (1-t)f(x)+tx \leq (1-t)f(y)+ty. \]
%  	The homotopy $H$ provides the desired contraction. 
%  \end{proof}
 
\begin{proof}
    Straight-line homotopies provide an explicit contraction of the space $\Equiv^+(X,X)$.  
\end{proof}
 
The following proposition follows directly.

\begin{prop} \label{prop:equiv_comps_monotone}
    For any $n,m\geq 0$, there is a homotopy equivalence 
    \[ \Equiv^+((I^\circ, \underline{n}), (I^\circ,\underline{m})) \simeq \Mon([n],[m]), \]
    where $\Mon([n],[m])$ is viewed as a discrete space.
\end{prop}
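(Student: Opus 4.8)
The plan is to exhibit a homotopy equivalence by showing that the space $\Equiv^+((I^\circ,\underline{n}),(I^\circ,\underline{m}))$ deformation retracts onto a discrete set of points in bijection with $\Mon([n],[m])$. The first step is to construct the map $\Equiv^+((I^\circ,\underline{n}),(I^\circ,\underline{m})) \to \Mon([n],[m])$: given a monotone homotopy equivalence $f$ of pairs, it restricts to a map $\underline{n} \to \underline{m}$ which, under the identifications $\underline{n} \cong [n]$ and $\underline{m} \cong [m]$ via the labelling $k \mapsto \frac{k+1}{n+2}$, is monotone; this defines the underlying combinatorial map. I would check this assignment is continuous (it is locally constant, since small perturbations of $f$ in the compact-open topology cannot change which marked point each $k$ lands on) and surjective (every monotone map $[n]\to[m]$ is realized by a piecewise-linear monotone homotopy equivalence, e.g. the one interpolating linearly between prescribed values, as in Figure \ref{fig:graph_and_map_cylinder_2_to_3}).

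The main work is to show that each fiber is contractible, so that this locally constant surjection is a homotopy equivalence onto the discrete set $\Mon([n],[m])$. Fix $\varphi \in \Mon([n],[m])$ and let $F_\varphi \subseteq \Equiv^+((I^\circ,\underline{n}),(I^\circ,\underline{m}))$ be the subspace of monotone homotopy equivalences $f$ realizing $\varphi$, i.e.\ with $f\left(\tfrac{k+1}{n+2}\right) = \tfrac{\varphi(k)+1}{m+2}$ for all $k$. This is a convex subset of the function space: if $f_0, f_1 \in F_\varphi$, the straight-line homotopy $f_t = (1-t)f_0 + tf_1$ consists of continuous maps $I^\circ \to I^\circ$ that are still monotone (a convex combination of non-decreasing functions is non-decreasing), still send each marked point $\tfrac{k+1}{n+2}$ to $\tfrac{\varphi(k)+1}{m+2}$ (since both endpoints do), and are still homotopy equivalences — this last point because any monotone continuous self-map of $I^\circ$ with values staying in $(0,1)$ is homotopic to the identity, as in Lemma \ref{lem:equivcontract}. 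Hence $F_\varphi$ is convex, therefore contractible, and moreover the contracting homotopies can be chosen continuously, so $\Equiv^+((I^\circ,\underline{n}),(I^\circ,\underline{m})) = \coprod_\varphi F_\varphi$ as a space, with each piece contractible.

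It then follows formally that the locally constant map to $\Mon([n],[m])$ is a homotopy equivalence: a disjoint union of contractible spaces indexed by a set $J$ is homotopy equivalent to $J$ with the discrete topology. Alternatively, and perhaps more cleanly for exposition, I would package the argument by noting that the restriction map and a section together with the straight-line homotopy give the homotopy inverse explicitly. I expect the only genuine subtlety — the step to be careful about — is verifying that straight-line homotopies stay within monotone homotopy equivalences of pairs, in particular that the property of being a homotopy equivalence (rather than an arbitrary monotone endomap) is preserved; this is exactly where the fact that a monotone self-map of $I^\circ$ landing in the interior is automatically a homotopy equivalence (compare Lemma \ref{lem:equivcontract}) does the work, so that the convexity argument goes through without having to control injectivity or surjectivity of the maps $f_t$ themselves. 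Everything else is routine: continuity of restriction in the compact-open topology, and the combinatorial bijection between marked-point data and $\Mon([n],[m])$.
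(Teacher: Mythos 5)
Your proposal is correct and fills in exactly the argument the paper intends: the paper derives the proposition "directly" from Lemma \ref{lem:equivcontract}, whose proof is the same straight-line-homotopy/convexity argument you use to contract each fiber over $\Mon([n],[m])$. The one worry you flag — preservation of the homotopy-equivalence property along the straight-line homotopy — is automatic, since $I^\circ$ is contractible and hence every continuous self-map is a homotopy equivalence.
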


Thus we have retrieved the more conventional definition of the simplex category from our topological definition. In what follows, we use the conventional notation in $\Delta$, for example denoting the object corresponding to $(I^\circ,\underline{n})$ by $[n]$ and freely using the identification of $\Hom_{\Delta}([n],[m])$ with $\Mon([n],[m])$ provided by Proposition \ref{prop:equiv_comps_monotone}. 

\begin{remark}\label{rmk:Simp_idents}
    A conventional way to present the simplex category is via generators and relations, a presentation that can be derived from the combinatorial description above. The simplex category $\Delta$ is generated by the \emph{face morphisms}
    \[ \begin{tikzcd}[row sep=0em,ampersand replacement=\&]
		d_i \colon \&[-3em] {[n-1]}\arrow[r] \& {[n]}\\
		\& j \arrow[r,mapsto] \& \begin{cases}
			j & j<i\\
			j+1 & j\geq i, 
		\end{cases}
	\end{tikzcd} \]
    that skip $i$ in the image and the \emph{degeneracy morphisms} 
    \[ \begin{tikzcd}[row sep=0em,ampersand replacement=\&]
		s_i \colon \&[-3em] {[n+1]}\arrow[r] \& {[n]}\\
		 \& j \arrow[r,mapsto] \& \begin{cases}
		 	j & j\leq i \\
		 	j-1 & j>i 
		 \end{cases}
    \end{tikzcd} \]
    that send two consecutive points to $i$. These morphisms are subject to the \emph{simplicial identities}:
	\begin{align*}
		d_j\circ d_i&=d_i\circ d_{j-1} & i<j\\
		s_j\circ s_i&=s_i\circ s_{j+1} & i\leq j\\
		s_j\circ d_i&=
		\begin{cases}
			d_i \circ s_{j-1} & i<j\\
			\id_{[n]} & i=j,j+1\\
			d_{i-1}\circ s_j & i>j+1.
		\end{cases} 
	\end{align*}
	%In section \ref{subsubsec:comp}, we will provide a extension of this generators-and-relations presentation to the cyclic category. 
\end{remark}

% \jbnoteil{Added some material here on Dwyer-Kan equivalences, since that wasn't defined anywhere.  I wonder if we want to restructure just a bit to fit this kind of narrative better?}
% \wsnoteil{I think that this works pretty well as is.}

We conclude this section with a discussion of the approach that we took in this section, as we repeat it frequently throughout this paper.  We defined a topological category $\Delta_{\Top}$ with objects given by the natural numbers and morphism spaces given by spaces of monotone homotopy equivalences.  The conventional description of $\Delta$, however, is as an ordinary category, with morphism sets given by the monotone maps between finite ordered sets.  We thus have a general means to compare an ordinary category, with discrete morphism spaces, to a topological category with possibly more interesting spaces of morphisms.  One way to rephrase Lemma \ref{lem:equivcontract} is to say that the mapping spaces in $\Delta_{\Top}$ are \emph{homotopy discrete}, in that they are homotopy equivalent to discrete spaces.

To tease further meaning from this statement, we describe some constructions for topological categories.

\begin{definition}
If $\mathcal{C}$ is a topological category, its \emph{homotopy category} is the category $\Ho(\mathcal{C})$ with the same objects as $\mathcal{C}$ and hom-sets given by 
\[ \Hom_{\Ho}(\mathcal{C})(x,y):= \pi_0 \Map_{\mathcal{C}}(x,y). \]
Since the functor $\pi_0$ preserves products, we can define the composition in $\Ho(\mathcal{C})$ to be the image under $\pi_0$ of the composition in $\mathcal{C}$. The identities are then the path components of the identity. 
\end{definition}

Since $\pi_0$ is functorial and preserves products, the construction of the homotopy category is functorial.  For our purposes, the important fact is that, given a functor of topological categories $F \colon \mathcal{C}\to \mathcal{D}$, we obtain an induced functor on homotopy categories
\[ \begin{tikzcd}
\Ho(F) \colon &[-3em] \Ho(\mathcal{C}) \arrow[r] & \Ho(\mathcal{D}). 
\end{tikzcd} \]

\begin{definition}
Let $\mathcal C$ and $\mathcal D$ be topological categories and $F \colon \mathcal C \rightarrow \mathcal D$ a topological functor, so that for any objects $X$ and $Y$ of $\mathcal C$, there is a continuous map
\[ \Map_\mathcal C(X,Y) \rightarrow \Map_\mathcal D(FX, FY). \]
Such a functor $F$ is a \emph{Dwyer-Kan equivalence} if:
\begin{enumerate}
    \item the above map is a weak homotopy equivalence of spaces for all objects $X$ and $Y$; and
    
    \item the induced functor $\Ho(F)$ on homotopy categories is essentially surjective, so that any object of $\Ho(\mathcal D)$ is isomorphic to one in the image of $\Ho(F)$. 
\end{enumerate}
\end{definition}

In the example of the simplex category, the second condition was satisfied immediately, since the two categories $\Delta_{\Top}$ and $\Delta$ have exactly the same set of objects.  The arguments in this section can be interpreted as establishing the following result.

\begin{prop} \label{DeltaTopDK}
    The map $\Delta_{\Top} \rightarrow \Delta$ is a Dwyer-Kan equivalence.
\end{prop}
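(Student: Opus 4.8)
The plan is to verify the two conditions in the definition of a Dwyer-Kan equivalence for the canonical functor $\Delta_{\Top}\to\Delta$, which is the identity on objects and sends a morphism to its path component. The second condition is immediate: the two categories have literally the same object set $\{(I^\circ,\underline n)\}_{n\ge 0}$, so $\Ho(\Delta_{\Top})\to\Delta$ is even bijective on objects, hence a fortiori essentially surjective. (Here one should note that $\Ho(\Delta_{\Top})$ is by construction exactly $\Delta$ as we have defined it in Definition \ref{topsimplexcat}, so condition (2) is really a tautology, but it is worth stating explicitly.)

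The substance is condition (1): for all $n,m\ge 0$ the map
\[ \Equiv^+((I^\circ,\underline n),(I^\circ,\underline m)) \longrightarrow \pi_0\,\Equiv^+((I^\circ,\underline n),(I^\circ,\underline m)) \cong \Mon([n],[m]) \]
is a weak homotopy equivalence of spaces, where the target carries the discrete topology. But this is precisely the content of Proposition \ref{prop:equiv_comps_monotone}: that proposition asserts a homotopy equivalence between the source and $\Mon([n],[m])$ viewed as a discrete space. A map from a space to a discrete set is a weak homotopy equivalence exactly when the space has the homotopy type of a discrete set with the same $\pi_0$ — equivalently, when each path component is weakly contractible — so Proposition \ref{prop:equiv_comps_monotone} (which in turn rests on the contractibility statement of Lemma \ref{lem:equivcontract}) gives condition (1) directly. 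I would spell out the one small point that the quotient map in question agrees, under the identification of Proposition \ref{prop:equiv_comps_monotone}, with the functor $\Delta_{\Top}\to\Delta$ on morphism spaces; this is true by the definition of $\Ho$ as taking $\pi_0$ of mapping spaces.

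I do not expect any serious obstacle here: the proposition is essentially a repackaging of Lemma \ref{lem:equivcontract} and Proposition \ref{prop:equiv_comps_monotone} in the language of Dwyer-Kan equivalences, and the only thing requiring care is bookkeeping — checking that the comparison functor is the one whose mapping-space maps are the quotients $\Equiv^+\to\pi_0\Equiv^+$, and that "homotopy equivalence to a discrete space" implies "weak homotopy equivalence to that discrete space". If anything is mildly delicate it is making sure the identification of $\pi_0\Equiv^+((I^\circ,\underline n),(I^\circ,\underline m))$ with $\Mon([n],[m])$ is functorial (i.e. compatible with composition), so that we genuinely have a functor and not merely a levelwise bijection; but this functoriality is already implicit in the definition of $\Delta$ as the homotopy category of $\Delta_{\Top}$ together with the verification underlying Proposition \ref{prop:equiv_comps_monotone}.
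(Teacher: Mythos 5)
Your proposal is correct and follows essentially the same route as the paper: the paper offers no separate argument for this proposition, stating only that it is a repackaging of the preceding results, namely the contractibility of $\Equiv^+(X,X)$ (Lemma \ref{lem:equivcontract}) and the resulting homotopy discreteness of the mapping spaces (Proposition \ref{prop:equiv_comps_monotone}), with essential surjectivity being automatic since the two categories share the same objects. Your additional care about identifying the comparison map on mapping spaces with the quotient to $\pi_0$, and about compatibility with composition, is sound bookkeeping that the paper leaves implicit.
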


It is important to note that $F$ being a Dwyer-Kan equivalence is strictly stronger than being an equivalence of homotopy categories, precisely because it captures information about the homotopy types of all of the mapping spaces.

\section{The cyclic category in terms of finite subsets} \label{finitesubsetLambda}

We now turn to the cyclic category $\Lambda$.  As in our exploration of the simplex category, we want to define specific subsets of $S^1$ as the objects in our topological version of the $\Lambda$. As a first step, we define particular covering maps $\mathbb{R}\to S^1$ where, here and throughout, we identify $S^1$ with the unit circle in $\mathbb{C}$. 

\begin{definition}
    For any $n\geq 0$, the \emph{$(n+1)$-stretched universal cover} of the circle is the map
    \[ \begin{tikzcd}[row sep=0em]
        p_{n+1} \colon &[-3em] \mathbb R \arrow[r] & S^1 \\
        &  x \arrow[r,mapsto] & \exp\left( \frac{2\pi ix}{n+1}\right).
	\end{tikzcd} \]
    We denote by $\underline{n}$ the image of $\mathbb Z \subseteq \mathbb R$ under $p_{n+1}$, For simplicity, for any $0\leq i \leq n$ we denote the image $p_{n+1}(i)$ by $i\in \underline{n} \subseteq S^1$ .
\end{definition}

%We will define the topological version of the cyclic category in a way completely analogous to the topological simplex category. 
As a consequence of this definition, we have the following convenient notion of monotone maps. 

\begin{definition}
    Denote by $[0,n+1] \subseteq \mathbb R^n$ the closed interval. A continuous map $f \colon S^1\to S^1$ is \emph{monotone} if the lift 
    \[ \begin{tikzcd}
		& & \mathbb R \arrow[d,"p_{k+1}"] & \\
		{[0,n+1]}\arrow[r,"p_{n+1}"']\arrow[urr,dashed, "\widetilde{f}"] & S^1\arrow[r,"f"'] &S^1 
	\end{tikzcd} \]  
    to any (and thus every) universal cover $p_{k+1}$ is a monotone map. We denote by 
    \[  \Equiv^+(S^1,S^1)\subseteq C^0(S^1,S^1) \]
    the subspace consisting of monotone homotopy equivalences. 
\end{definition}

On an intuitive level, the idea here is that a map $f\in \Equiv^+(S^1,S^1)$ is a loop that traces precisely once around the target circle, possibly stopping for some time, but never backtracking. In particular, any such $f$ has degree 1. As with the simplex category, we can restrict to maps of pairs to define the topological cyclic category, as follows.

For $n\geq 0$, denote by $\langle n\rangle$ the pair of spaces $(S^1,\underline{n})$. For $n,m>0$, we denote by
\[ \Equiv^+(\langle n\rangle,\langle m\rangle)=\Equiv^+((S^1,\underline{n}),(S^1,\underline{m}))\subseteq \Equiv^+(S^1,S^1) \]
the subspace consisting of maps of pairs, i.e., maps $f$ such that $f(\langle n\rangle)\subseteq \langle m\rangle$. 

\begin{definition}
    The \emph{topological cyclic category} $\Lambda_{\Top}$ is the topological category with objects $\langle n\rangle$ for $n\geq0$, and hom-spaces 
    \[ \Map_{\Lambda_{\Top}}(\langle n\rangle,\langle m\rangle)=\Equiv^+(\langle n\rangle,\langle m\rangle). \]
    Finally, for any $f \colon \langle n\rangle \to \langle m\rangle$ in $\Lambda_{\Top}$, we denote by $\underline{f} \colon \underline{n}\to \underline{m}$ the underlying map of sets. 
\end{definition}

Precisely as with the simplex category, we now take path components to obtain the cyclic category. 

\begin{definition}
    The \emph{cyclic category} $\Lambda$ is the homotopy category of $\Lambda_{\Top}$. That is, $\Lambda$ has objects $\langle n\rangle$ for $n\geq 0$ and morphisms
    \[ \Hom_{\Lambda}(\langle n\rangle,\langle m\rangle) =\pi_0 \Equiv^+(\langle n\rangle,\langle m\rangle). \]
\end{definition}

As with the simplex category, the following proposition demonstrates that passing to $\Lambda$ from $\Lambda_{\Top}$ does not lose homotopical information. 

\begin{prop} 
    The mapping spaces in $\Lambda_{\Top}$ are homotopy discrete. In particular, the canonical functor $\Lambda_{\Top}\to \Lambda$ is a Dwyer-Kan equivalence.  
\end{prop}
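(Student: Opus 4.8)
The proof plan is as follows. Since the functor $\Lambda_{\Top}\to\Lambda$ is the identity on objects and $\Lambda$ is \emph{defined} to be $\Ho(\Lambda_{\Top})$, the essential-surjectivity condition for a Dwyer--Kan equivalence is automatic; and on each mapping space the functor is the canonical map $\Equiv^+(\langle n\rangle,\langle m\rangle)\to\pi_0\Equiv^+(\langle n\rangle,\langle m\rangle)$, which is a weak homotopy equivalence precisely when $\Equiv^+(\langle n\rangle,\langle m\rangle)$ is homotopy discrete, that is, when each of its path components is weakly contractible. So the task reduces to exhibiting $\Equiv^+(\langle n\rangle,\langle m\rangle)$ as a topological disjoint union of contractible spaces. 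Note that, in contrast with the simplex category, one cannot hope to argue as in Lemma~\ref{lem:equivcontract} that the whole mapping space is contractible --- already $\Equiv^+(S^1,S^1)$ is homotopy equivalent to $S^1$ --- so the marked points must do real work.

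First I would pass to lifts along the stretched universal covers $p_{n+1}$ and $p_{m+1}$. Each $f\in\Equiv^+(\langle n\rangle,\langle m\rangle)$ has a unique lift $\widetilde f\colon\mathbb R\to\mathbb R$ with $\widetilde f(0)\in[0,m+1)$; it is continuous, non-decreasing, and of degree one (as noted after the definition of monotone self-maps of $S^1$), so it satisfies $\widetilde f(x+(n+1))=\widetilde f(x)+(m+1)$, and --- because $f$ carries marked points to marked points --- it sends every integer to an integer. Set $\phi_f:=\widetilde f|_{\{0,1,\dots,n\}}\colon\{0,\dots,n\}\to\mathbb Z$; it is non-decreasing, $\phi_f(0)\in\{0,\dots,m\}$, and $\phi_f(n)\le\phi_f(0)+(m+1)$, so it lies in a finite set $P_{n,m}$ of ``combinatorial types.'' Using the homotopy lifting property of covering maps (applied to the adjoint map $S^1\times[0,1]\to S^1$ of a path in the mapping space), the assignment $f\mapsto\phi_f$ is constant along paths, hence factors through $\pi_0$ and partitions $\Equiv^+(\langle n\rangle,\langle m\rangle)$ into the clopen subspaces $\Equiv^+_\phi=\{f:\phi_f=\phi\}$. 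This refinement is genuinely needed: the underlying set map $\underline f$ alone does not detect the component of $f$ --- for instance when $\underline f$ is constant, $\phi_f$ additionally records which of the $n+1$ arcs of $S^1$ ``carries the degree'' --- whereas $\phi_f$ does.

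The heart of the argument is then to show each nonempty $\Equiv^+_\phi$ is contractible. By the equivariance relation a lift $\widetilde f$ is determined by its restriction to $[0,n+1]$, and prescribing $\phi$ fixes the values $\widetilde f(i)=\phi(i)$ for $0\le i\le n$ together with $\widetilde f(n+1)=\phi(0)+(m+1)$. Restricting to the subintervals $[i,i+1]$ therefore gives a homeomorphism of $\Equiv^+_\phi$ with $\prod_{i=0}^{n} N(\phi(i),\phi(i+1))$, where $\phi(n+1):=\phi(0)+(m+1)$ and $N(a,b)\subseteq C^0([0,1],\mathbb R)$ denotes the space of non-decreasing maps with endpoint values $a$ and $b$; conversely, the equivariant extension of any such tuple and its descent to $S^1$ is automatically monotone of degree one, hence a homotopy equivalence preserving the marked points, so it lies in $\Equiv^+_\phi$. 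Each factor $N(a,b)$ is a single point when $a=b$, and otherwise is a nonempty convex subset of $C^0([0,1],\mathbb R)$ --- a convex combination of non-decreasing maps with fixed endpoints is again such --- hence contractible by straight-line homotopies, exactly as in Lemma~\ref{lem:equivcontract}. A product of contractible spaces is contractible, so $\Equiv^+_\phi$ is contractible; these are then precisely the path components, and $\Equiv^+(\langle n\rangle,\langle m\rangle)$ is homotopy discrete.

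I expect the only non-formal point to be the interplay between the compact--open topologies and the lift description: that $f\mapsto\widetilde f$ is continuous (so that the $\Equiv^+_\phi$ are open and the decomposition is a genuine topological disjoint union) and that restriction to the subintervals $[i,i+1]$ is a homeomorphism onto the product, rather than merely a continuous bijection. Both are standard consequences of the uniform path/homotopy lifting lemma for covering spaces together with the exponential law, but they should be spelled out; the convexity and degree bookkeeping are then routine. As a byproduct, the index sets $\{\phi\in P_{n,m}:\Equiv^+_\phi\neq\emptyset\}$ recover Connes' combinatorial description of $\Hom_\Lambda(\langle n\rangle,\langle m\rangle)$, although we do not need this here.
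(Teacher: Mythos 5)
Your argument is correct and is exactly the approach the paper has in mind: its proof is the one-line remark that the result ``amounts to an examination of straight-line homotopies of induced maps on universal covers,'' and your passage to lifts along $p_{n+1}$, $p_{m+1}$, the clopen decomposition by combinatorial type, and the convexity/straight-line contraction of each piece is a careful fleshing-out of precisely that. The points you flag as needing care (continuity of $f\mapsto\widetilde f$ and the product decomposition being a homeomorphism) are indeed the only places requiring attention, and they go through as you indicate.
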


\begin{proof}
    The proof amounts to an examination of straight-line homotopies of induced maps on universal covers. 
\end{proof}

\subsection{Drawing morphisms}  

To represent a morphism in $\Lambda$ visually, it suffices to draw a single map of marked circles representing that morphism.  As in the case of the category $\Delta$, there are a number of pictorial ways to represent such a map, of which we focus on two: mapping cylinders and graphs of lifts to universal covers.

A monotone homotopy equivalence $f \colon (S^1,\underline{n})\to (S^1,\underline{m})$ that is also a morphism of pairs is uniquely determined by the composite
\[ \begin{tikzcd}
    \overline{f} \colon &[-3em] {[0,n+1]} \arrow[r,"p_{n+1}"] & S^1 \arrow[r,"f"] &  S^1.
\end{tikzcd} \]
Since we must have $\overline{f}(0)= i$ for some $0\leq i \leq m$, $\overline{f}$ uniquely determines and is uniquely determined by the lift 
\[ \begin{tikzcd}
    & \mathbb R \arrow[d,"p_{m+1}"]  \\
    {[0,n+1]}\arrow[ur,dashed, "\widetilde{f}"] \arrow[r,"\overline{f}"'] &S^1. 
\end{tikzcd} \]
Since $f$ must be degree 1, this lift can be alternately viewed as a map 
\[ \begin{tikzcd}
    \widetilde{f} \colon &[-3em] {[0,n+1]}\arrow[r] & {[i,m+1+i]} 
\end{tikzcd} \]
that sends the integer points $\{0,1,\ldots,n\}$ to integer points in $\{i,i+1,\ldots,i+m+1\}$. 

% \vspace*{1em}

% \noindent{\scshape First method:} We draw the graph of $\widetilde{f}$, indicating the integer points on our axes. 

% \vspace*{1em}

% On the other hand, from the map $f \colon S^1\to S^1$ we can construct the mapping cylinder $M_f$, which is the pushout of the diagram 
% \[ \begin{tikzcd}
% 	S^1\times\{1\}\arrow[d,"f"'] \arrow[r,hookrightarrow] & S^1 \times I \arrow[d]\\
% 	S^1 \arrow[r] & M_f.
% \end{tikzcd} \]

% \vspace*{1em}

% \noindent{\scshape Second method:} We draw a 3-dimensional picture of $M_f$, containing red lines $I\times \langle n\rangle$, and red points $\langle m\rangle$ in the target circle.  

% \vspace*{1em}

\begin{example} \label{eg:include}
    We define a morphism $f \colon \langle 1\rangle \rightarrow \langle 2 \rangle$ whose underlying map of sets is given by $0 \mapsto 2$ and $1\mapsto 1$. Let us describe why why this description uniquely determines the homotopy class $f$. 
	
    Since $f$ is monotone of degree 1, the lift 
    \[ \begin{tikzcd}
		& \mathbb{R}\arrow[d,"p_3"]\\
		{[0,2]} \arrow[r,"f"']\arrow[ur,"\widetilde{f}"] & S^1 
	\end{tikzcd} \]
    to the universal cover is non-decreasing. Recall that the  covering map $p_{n+1}$ associated with $(S^1,\underline{n})$ is the map $\mathbb{R}\to S^1$ given by $x\mapsto x\mod n+1$. Since $f$ is degree 1 and monotone, $f$ is uniquely determined by the map 
    \[ \begin{tikzcd}
		\widetilde{f} \colon &[-3em] {[0,2]} \arrow[r] & {[\widetilde{f}(0),\widetilde{f}(0)+3].}
	\end{tikzcd} \]
    Since $f(0)=2$ and $f(1)=1$, we see that $\widetilde{f}(1)=\widetilde{f}(0)+2$. Similarly, $\widetilde{f}(2) =\widetilde{f}(0)+3$. In each of these cases, there is a unique homotopy class of endpoint-preserving monotone maps of intervals
    \[ \begin{tikzcd}
		{[0,1]} \arrow[r] & {[\widetilde{f}(0), \widetilde{f}(0)+2]}
	\end{tikzcd} \]
    and 
    \[ \begin{tikzcd}
	{[1,2]} \arrow[r] & {[\widetilde{f}(0)+2, \widetilde{f}            (0)+3]}
	\end{tikzcd} \]
    and so the homotopy class of $f$ is uniquely determined. 
	
    Using the first method of sketching morphisms, we draw the graph of the map 
    \[ \begin{tikzcd}
	\widetilde{f} \colon &[-3em] {[0,2]}\arrow[r] & {[2,5]}
	\end{tikzcd} \]
	with marked points indicated on the two axes in Figure \ref{fig:graph_1_to_2}. 
	\begin{figure}[h!]
		\begin{tikzpicture}
			\draw (0,1) to (0,7);
			\path[red] (0,1) node[label=left:$(2)\quad  2$] {};
			\path[red] (0,3) node[label=left:$(0)\quad 3$] {};
			\path[red] (0,5) node[label=left:$(1)\quad 4$] {};
			\foreach \y in {1,3,5}{
				\draw[red,fill=red] (0,\y) circle (0.05);
			};
			\draw (2,-1) to (6,-1);
			\path[red] (2,-1) node[label=below:$0$] {};
			\path[red] (4,-1) node[label=below:$1$] {};
			\foreach \x in {2,4}{
				\draw[red,fill=red] (\x,-1) circle (0.05);
			}
			
			\draw[opacity=0.5] (2,1) rectangle (6,7);
			\draw[blue] (2,1) to (4,3);
			\draw[blue] (4,3) to (6,7);
			\draw[thin,red,dashed] (2,-1) to (2,1);
			\draw[thin,red,dashed] (4,-1) to (4,3);
		\end{tikzpicture}
		\caption{A graph depiction of the morphism $f \colon \langle 1\rangle \to \langle 2\rangle$ of Example \ref{eg:include}.} \label{fig:graph_1_to_2} 
	\end{figure}
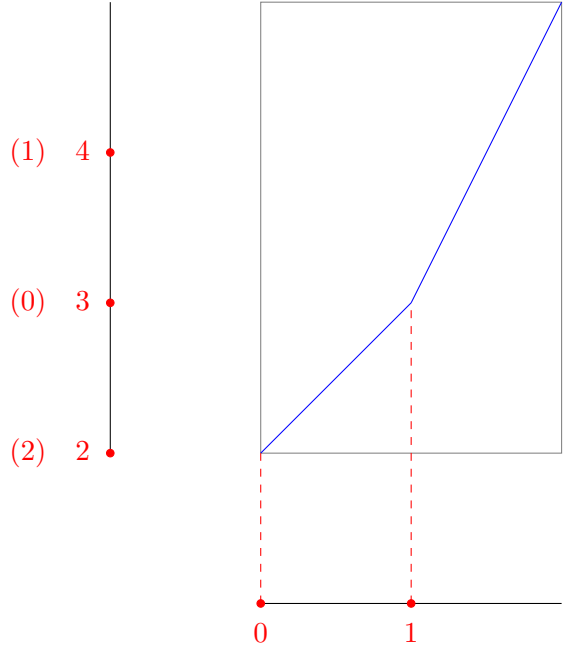
	
    Our second method requires us to draw the mapping cylinder, which is the pushout of the diagram 
    \[ \begin{tikzcd}
    S^1\times\{1\}\arrow[d,"f"'] \arrow[r,hookrightarrow] & S^1 \times I \arrow[d]\\
    S^1 \arrow[r] & M_f.
    \end{tikzcd} \] 
    This mapping cylinder is pictured in Figure \ref{fig:3d_mapping_cyl_1_to_2}.  
    \begin{figure}[h!]
    \includegraphics[width=0.7\textwidth]{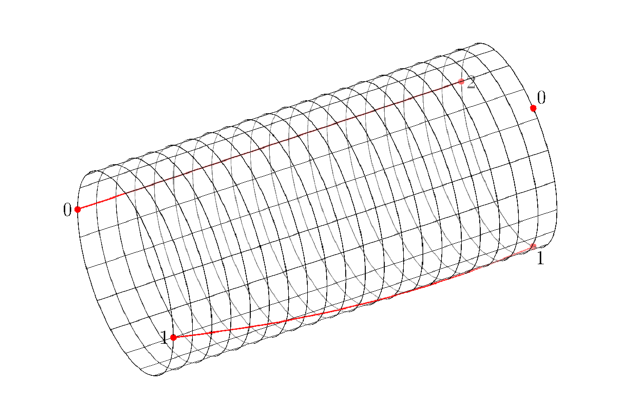}
    \caption{A mapping cylinder depiction of a morphism $f \colon \langle 1\rangle \to \langle 2\rangle$.} \label{fig:3d_mapping_cyl_1_to_2}
    \end{figure} 
\end{example}

\begin{example} \label{eg:collapseto1}
    Our second example is a morphism $g \colon \langle 2\rangle \rightarrow \langle 0\rangle$ whose underlying map of sets must send every marked point to $0$. We want to show that, in this case, the underlying map of sets does not uniquely determine the morphism $g$. 
	
    In Figure \ref{fig:mapping_cylinder_2_to_1} we draw the mapping cylinder depiction of the morphism we want to consider.
    \begin{figure}[h!]
	\includegraphics[width=0.7\textwidth]{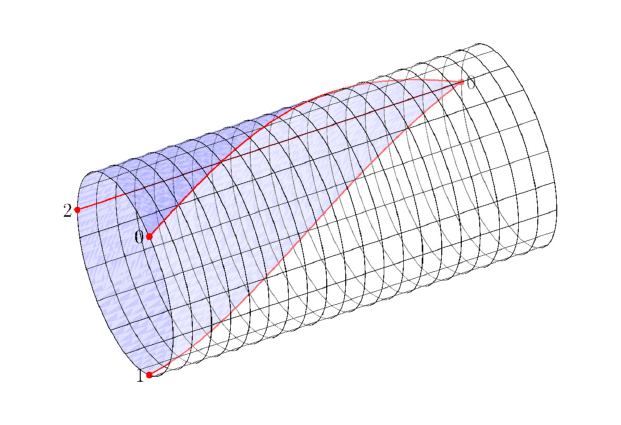}
		\caption{A mapping cylinder depiction of a morphism $\langle 2\rangle \to \langle 0\rangle$.} \label{fig:mapping_cylinder_2_to_1}
    \end{figure}
    We have highlighted in blue the line segment in $(S^1,\underline{2})$ that gets collapsed to the point $0$. Observe that we can recover $g$ if we know two pieces of information:
    \begin{enumerate}
		\item that $g$ sends every marked point to $0$; and
		
		\item that the oriented interval in $(S^1,\underline{2} )$ going from $1$ to $0$ via the point $2$ is collapsed by $g$. We could equivalently remember that the oriented interval in $(S^1,\underline{2})$ that goes from $0$ to $1$ is not collapsed.
    \end{enumerate}
	
    For completeness, we include a picture in graph form in Figure \ref{fig:graph_2_to_1}. 
    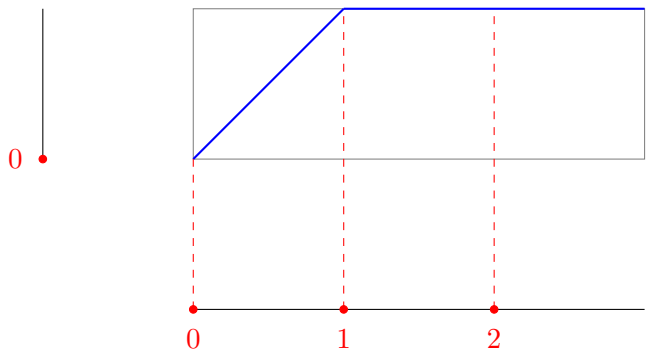
\begin{figure}[h!]
		\begin{tikzpicture}
			\draw (0,1) to (0,3);
			\path[red] (0,1) node[label=left:$0$] {};
			\foreach \y in {1}{
				\draw[red,fill=red] (0,\y) circle (0.05);
			};
			\draw (2,-1) to (8,-1);
			\path[red] (2,-1) node[label=below:$0$] {};
			\path[red] (4,-1) node[label=below:$1$] {};
			\path[red] (6,-1) node[label=below:$2$] {};
			
			\foreach \x in {2,4,6}{
				\draw[red,fill=red] (\x,-1) circle (0.05);
			}
			
			\draw[opacity=0.5] (2,1) rectangle (8,3);
			\draw[thick,blue] (2,1) to (4,3);
			\draw[thick,blue] (4,3) to (8,3);
			\draw[thin,red,dashed] (2,-1) to (2,1);
			\draw[thin,red,dashed] (4,-1) to (4,3);
			\draw[thin,red,dashed] (6,-1) to (6,3);
		\end{tikzpicture}
		\caption{A graph depiction of a morphism $\langle r\rangle \to \langle 1\rangle$.}\label{fig:graph_2_to_1}
    \end{figure}
\end{example}

\begin{example} \label{eg:Two_morphs_1to0}
    As one final example, let us try to classify the possible morphisms $\langle 1\rangle \to \langle 0\rangle$. There is only one possible underlying map of sets, and so the question is which possible lifts could induce this map. Such lifts are monotone continuous maps  
    \[ \begin{tikzcd}
    \widetilde{f}\colon &[-3em] {[0,2]}\arrow[r] & {[0,1]}
    \end{tikzcd} \] 
    such that $0\mapsto 0$, $2\mapsto 1$, $1$ is sent to either $0$ or $1$. It is not hard to see that there are only two such lifts, up to homotopy; we draw their graphs in blue and purple in Figure \ref{fig:graph_1_to_0}. 
    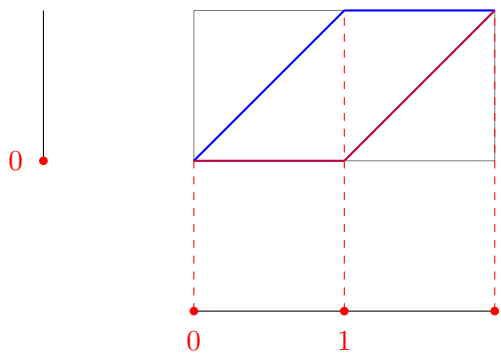
\begin{figure}[h!]
		\begin{tikzpicture}
			\draw (0,1) to (0,3);
			\path[red] (0,1) node[label=left:$0$] {};
			\foreach \y in {1}{
				\draw[red,fill=red] (0,\y) circle (0.05);
			};
			\draw (2,-1) to (6,-1);
			\path[red] (2,-1) node[label=below:$0$] {};
			\path[red] (4,-1) node[label=below:$1$] {};
			
			\foreach \x in {2,4,6}{
				\draw[red,fill=red] (\x,-1) circle (0.05);
			}
			
			\draw[opacity=0.5] (2,1) rectangle (6,3);
			\draw[thick,blue] (2,1) to (4,3);
			\draw[thick,blue] (4,3) to (6,3);
			\draw[thick,purple] (2,1) to (4,1);
			\draw[thick,purple] (4,1) to (6,3);
			\draw[thin,red,dashed] (2,-1) to (2,1);
			\draw[thin,red,dashed] (4,-1) to (4,3);
			\draw[thin,red,dashed] (6,-1) to (6,3);
		\end{tikzpicture}
        \caption{A graph depiction of lifts of the map $\langle 1 \rangle \rightarrow \langle 0 \rangle$.} \label{fig:graph_1_to_0}
	\end{figure}
	Notice that, in the corresponding map of circles $f \colon (S^1,\underline{1})\to (S^1,\underline{0})$, choosing which of these two morphisms we are representing amounts to choosing which segment of $(S^1,\underline{1})$ is not collapsed to a point by $f$. %This subtlety is of key importance when we discuss the combinatorial representation of morphisms in $\Lambda$.  \jbnote{Do we emphasize this when it comes up again?  Note to check - we should if we don't!}\wsnote{Hmm, in a previous version, there were several lemmata which were explicitly about this, but not we don't seem to bring it up again. Maybe we should just remove the final sentence here?}
\end{example}

\subsection{Alternative approaches}  

We conclude with two further ways to think about the morphisms in $\Lambda$, the first of which gives a particularly nice combinatorial description of the morphisms, and the second of which can be described in terms of underlying sets.  

There is, in fact, another kind of map lurking in the background of our previous discussions, one that provides one of the nicest combinatorial characterizations of $\Lambda$. Deck transformations along $p_{n+1}$ and $p_{m+1}$ define two $\ZZ$-actions on $\ZZ$, where $1\in\ZZ$ acts by addition by $n+1$ and $m+1$, respectively. We denote the former $\ZZ$-set by ${}^{n\circlearrowright}\ZZ$ and the latter by ${}^{m\circlearrowright}\ZZ$. Lifting $f \colon \langle n\rangle \to \langle m\rangle$ to a morphism $\widetilde{f}$ of universal covers then yields a $\ZZ$-equivariant map 
\[ \begin{tikzcd}
    \widetilde{f}\colon &[-3em] {}^{n\circlearrowright}\ZZ \arrow[r] & {}^{m\circlearrowright}\ZZ.
\end{tikzcd} \]

This map completely determines the path component of $f$. However, since multiple lifts determine the same map, multiple $\ZZ$-equivariant maps determine the same path component. To obtain a first combinatorial characterization of the morphisms in $\Lambda$, we make the following definition. 

\begin{definition}
    We denote the set of monotone $\ZZ$-equivariant maps from ${}^{n\circlearrowright}\ZZ$ to ${}^{m\circlearrowright}\ZZ$ by $\ZZ\!\Mon({}^{n\circlearrowright}\ZZ,{}^{m\circlearrowright}\ZZ)$. We define a $\ZZ$-action on $\ZZ\!\Mon({}^{n\circlearrowright}\ZZ,{}^{m\circlearrowright}\ZZ)$ by 
    \[ (1+f)(k)=f(k+n+1)=f(k)+m+1. \]
\end{definition}

An examination of which maps of universal covers induce the same map then yields the following result. 

\begin{prop}
    Let $n,m\geq 0$. There is a canonical identification 
    \[ \Hom_{\Lambda}(\langle n\rangle,\langle m\rangle) \cong \ZZ\!\Mon({}^{n\circlearrowright}\ZZ,{}^{m\circlearrowright}\ZZ)/{\ZZ}. \]
    Moreover, this identification is compatible with composition. 
\end{prop}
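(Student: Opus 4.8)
The plan is to identify $\Hom_\Lambda(\langle n \rangle, \langle m\rangle) = \pi_0 \Equiv^+(\langle n\rangle, \langle m\rangle)$ with $\ZZ\!\Mon({}^{n\circlearrowright}\ZZ, {}^{m\circlearrowright}\ZZ)/\ZZ$ by analyzing the lifting procedure described just before the statement, and then to check compatibility with composition separately. First I would set up the map in one direction: given $f \in \Equiv^+(\langle n\rangle, \langle m\rangle)$, restrict $f$ to the marked points to get $\underline f \colon \underline n \to \underline m$, then choose a lift $\widetilde f$ to universal covers along $p_{n+1}$, $p_{m+1}$; since $f$ is monotone of degree $1$, the restriction of $\widetilde f$ to $\ZZ \subseteq \mathbb R$ is a monotone $\ZZ$-equivariant map ${}^{n\circlearrowright}\ZZ \to {}^{m\circlearrowright}\ZZ$. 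The ambiguity in choosing the lift is precisely a $\ZZ$-torsor (the two covers differ by deck transformations), which is exactly the $\ZZ$-action in the displayed definition, so we get a well-defined element of the quotient $\ZZ\!\Mon/\ZZ$.

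Next I would check this is constant on path components: a path in $\Equiv^+(\langle n\rangle, \langle m\rangle)$ lifts (after fixing a basepoint lift at one end) to a path of $\ZZ$-equivariant maps, and since the target $\ZZ$-set is discrete, the induced map on the $\ZZ$-integer points is locally constant, hence constant — so we obtain a well-defined map $\pi_0\Equiv^+(\langle n\rangle, \langle m\rangle) \to \ZZ\!\Mon({}^{n\circlearrowright}\ZZ, {}^{m\circlearrowright}\ZZ)/\ZZ$. For the inverse, given a monotone $\ZZ$-equivariant map $\varphi \colon {}^{n\circlearrowright}\ZZ \to {}^{m\circlearrowright}\ZZ$, I would extend it to a monotone map $\mathbb R \to \mathbb R$ (e.g. piecewise-linearly interpolating on each interval $[k, k+1]$, using $\ZZ$-equivariance to guarantee this descends correctly), which is a monotone homotopy equivalence of $\mathbb R$; it then descends along the covers to a monotone homotopy equivalence $S^1 \to S^1$ preserving the marked sets, i.e. an element of $\Equiv^+(\langle n\rangle, \langle m\rangle)$. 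Two choices of interpolation are joined by a straight-line homotopy (as in Lemma~\ref{lem:equivcontract}), so the path component is well-defined, and replacing $\varphi$ by $1 + \varphi$ changes the descended map not at all, so the assignment factors through the quotient. Checking that these two constructions are mutually inverse is then a matter of chasing the definitions: starting from $f$, lifting, and re-descending recovers the same path component by the contractibility argument; starting from $\varphi$, descending and re-lifting recovers $\varphi$ up to the $\ZZ$-action because both the choice of re-lift basepoint and $\varphi$'s own $\ZZ$-orbit are the same torsor.

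Finally, for compatibility with composition, I would observe that lifting is functorial up to the choice of basepoint: given $f \colon \langle n\rangle \to \langle m\rangle$ and $g \colon \langle m \rangle \to \langle \ell\rangle$, a lift of $g \circ f$ is obtained by composing a lift of $f$ with a lift of $g$ (one checks the composite is again monotone $\ZZ$-equivariant for the appropriate actions, using that $\widetilde g(k + m + 1) = \widetilde g(k) + \ell + 1$), and the ambiguity again collapses in the quotient. This shows the bijection intertwines the composition in $\Lambda$ (induced by composition in $\Lambda_{\Top}$ via $\pi_0$) with composition of $\ZZ$-equivariant maps modulo the $\ZZ$-action, and identities correspond to identities. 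The main obstacle I anticipate is the careful bookkeeping of the two distinct $\ZZ$-actions — the one coming from deck transformations on the source versus the target — and making sure the interpolation in the inverse construction genuinely produces a map of \emph{pairs} (i.e. lands marked points on marked points and nowhere else collapses the circle incorrectly); the degree-$1$ and monotonicity hypotheses are what make this work, and pinning down exactly where they are used is the delicate point.
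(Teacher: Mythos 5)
Your proposal is correct and follows exactly the route the paper intends: the paper itself only gives the one-line justification ``an examination of which maps of universal covers induce the same map,'' and your argument fills in precisely those details (lifting along $p_{n+1}$, $p_{m+1}$, identifying the lift ambiguity with the deck-transformation $\ZZ$-action, constancy on path components via discreteness of $\ZZ$, and the piecewise-linear interpolation for the inverse). No discrepancies with the paper's approach.
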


\begin{remark}
    We can define a variant of the cyclic category whose objects are the same, but whose hom-sets are $\ZZ\!\Mon({}^{n\circlearrowright}\ZZ,{}^{m\circlearrowright}\ZZ)$. This category is known in the literature as the \emph{paracyclic category}, and it often denoted by $\Lambda_\infty$ or $\Delta\ZZ $. See \cite[Proposition 6.3.4(c)]{loday}, \cite[Example I.23]{dkcsg}, or \cite[Appendix B]{scholzenikolaus} for more details.  
\end{remark}

%\subsection{Cyclic morphisms and maps of sets}  \jbnote{and here too}

Now let us consider the other combinatorial way to encode morphisms in $\Lambda$, which is in terms of morphisms of underlying sets. There is a forgetful functor 
\[ \begin{tikzcd}
    U \colon &[-3em] \Lambda\arrow[r] & \Set
\end{tikzcd} \]
that sends $\langle n\rangle=(S^1,\underline{n})$ to the set $\underline{n}$, canonically identified with $\{0,1\ldots,n\}$, and sends a morphism $f \colon \langle n\rangle \to\langle m\rangle$ to the induced map $\underline{f} \colon \underline{n}\to \underline{m}$ on marked points. 

We begin by ascertaining which maps of sets are the underlying maps of morphisms in $\Lambda$. 

\begin{lemma} \label{lem:underlying_cyc_maps}
    A map $\varphi \colon \underline{n}\to\underline{m}$ is the underlying map of a morphism in $\Lambda$ if and only if there exist $i\in \underline{n}$ and $j\in\underline{m}$ such that $\varphi$ is weakly monotone when viewed as a map 
    \[ \begin{tikzcd}
    \{i+1,\ldots,n,0,\ldots,i\}\arrow[r] & \{j+1,\ldots,n,0,\ldots, j\}.
    \end{tikzcd} \]
\end{lemma}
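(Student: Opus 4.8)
The plan is to prove both directions by unwinding the definition of a morphism in $\Lambda$ as a path component of a space of monotone homotopy equivalences, together with the lifting picture developed in this section. Throughout, write $U(f) = \underline{f}$ for the underlying map of sets.

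\textbf{Necessity.} Suppose $\varphi = \underline{f}$ for some $f \colon \langle n\rangle \to \langle m\rangle$ in $\Lambda$, represented by a monotone homotopy equivalence $f\colon (S^1,\underline{n}) \to (S^1,\underline{m})$ of pairs. Recall from the discussion preceding Example \ref{eg:include} that $f$ is determined by a monotone lift $\widetilde{f}\colon [0,n+1] \to [i, m+1+i]$, where $i = \widetilde{f}(0)$ is characterized by $\overline{f}(0) = i \in \underline{m}$, and $\widetilde{f}$ carries $\{0,1,\ldots,n\}$ into $\{i, i+1, \ldots, i+m+1\}$. Set $j = i \bmod (m+1) \in \underline{m}$. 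Since $\widetilde{f}$ is a monotone function $[0,n+1] \to [i,m+1+i]$ sending $0 \mapsto i$ and (by the degree-$1$ and equivariance conditions) $n+1 \mapsto i + m + 1$, restricting to the integers $0 < 1 < \cdots < n < n+1$ gives a weakly monotone map of the linearly ordered set $\{0 < 1 < \cdots < n\}$ into $\{i < i+1 < \cdots < i+m\}$ — here I use that $\widetilde f(n) < \widetilde f(n+1) = i+m+1$, so $\widetilde f(n) \le i+m$. Projecting under $p_{n+1}$ and $p_{m+1}$ identifies $\{0,\ldots,n\}$ with the cyclically reordered set $\{i+1, \ldots, n, 0, \ldots, i\}$ (reading $i = 0$ correctly when $\widetilde f(0)$ is an integer multiple of $n+1$, which it is) — more carefully, the order on $\{0,\ldots,n\}$ induced by pulling back the order on $[0,n+1]$ via the identification with $\{\widetilde f(0),\ldots\}$ starts the count just after the point mapping to $0$ under $p_{n+1}$; but since $\widetilde f(0) \in \{0,\ldots,m\}$ we may take $i$ to be the source index $0$ itself, i.e. the relevant reordering on the source is the standard one $\{0,1,\ldots,n\}$ and on the target it is $\{j+1,\ldots,m,0,\ldots,j\}$. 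Thus $\varphi$ is weakly monotone as a map $\{0,1,\ldots,n\} = \{i+1,\ldots,n,0,\ldots,i\}\big|_{i=n} \to \{j+1,\ldots,m,0,\ldots,j\}$, which is the asserted condition (with the source index chosen to be $i=n$, or equivalently any index whose successor starts the monotone run).

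\textbf{Sufficiency.} Conversely, suppose $\varphi \colon \underline{n} \to \underline{m}$ is weakly monotone as a map $\{i+1,\ldots,n,0,\ldots,i\} \to \{j+1,\ldots,m,0,\ldots,j\}$ for some $i,j$. We construct $f$. Working on universal covers, lift $\varphi$ to a $\ZZ$-equivariant monotone map $\widetilde\varphi \colon {}^{n\circlearrowright}\ZZ \to {}^{m\circlearrowright}\ZZ$ by: choosing representatives $i+1, \ldots, n, n+1+0, \ldots, n+1+i \in \ZZ$ for the source and $j+1, \ldots, m, m+1+0, \ldots, m+1+j \in \ZZ$ for the target, sending each source representative to the representative of its $\varphi$-image in the chosen fundamental domain, and extending $\ZZ$-equivariantly; the weak monotonicity of $\varphi$ on the cyclically reordered sets is exactly what guarantees $\widetilde\varphi$ is (weakly) monotone on all of $\ZZ$. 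Now extend $\widetilde\varphi$ to a monotone, degree-$1$, endpoint-respecting continuous map $[0,n+1] \to \mathbb R$ by interpolating linearly (or by any monotone interpolation) between consecutive integer points; this descends to a continuous $f \colon S^1 \to S^1$ that is monotone in the sense of the earlier definition, has degree $1$, and — being built from a monotone interpolation of integers to integers on a compact interval — is a homotopy equivalence. By construction $f$ maps $\underline n$ into $\underline m$ and induces $\varphi$ on marked points, so $[f] \in \Hom_\Lambda(\langle n\rangle, \langle m\rangle)$ with $\underline{[f]} = \varphi$.

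\textbf{Main obstacle.} The genuinely delicate point is bookkeeping the cyclic reordering: translating between "monotone lift $\widetilde f$ on $[0,n+1]$ with $\widetilde f(0) = i$" and "weakly monotone on the reordered set $\{i+1,\ldots,n,0,\ldots,i\}$" requires being careful about which basepoint index is read off from $\widetilde f(0) \in \underline m$ versus where the source ordering "begins," and about the boundary cases where a marked point maps exactly to $0$ (so the representative $i$ could be taken as either $0$ or $m+1$, resp. the source run could start at either end). I would handle this by fixing once and for all the convention that the source is always read in its standard order $\{0,1,\ldots,n\}$ and all the cyclic freedom is absorbed into the choice of $j$ and of the target fundamental domain $\{j+1,\ldots,m,0,\ldots,j\}$; with that normalization the equivalence of the two formulations becomes a direct check. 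The topological claims (monotone interpolation is a homotopy equivalence, the descent to $S^1$ is continuous) are routine and can be cited from Lemma \ref{lem:equivcontract} and the surrounding discussion.
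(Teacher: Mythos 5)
Your proof is correct in substance but takes a genuinely different route from the paper's. The paper argues both directions by cutting circles open: for necessity it cuts $(S^1,\underline{n})$ and $(S^1,\underline{m})$ at unmarked points to obtain a monotone map of marked intervals, which immediately exhibits the required cyclic reorderings; for sufficiency it represents $\varphi$ by a monotone map of marked intervals $(I^\circ,\{i+1,\ldots,i\})\to(I^\circ,\{j+1,\ldots,j\})$ and passes to one-point compactifications. You instead work on universal covers throughout, reading off weak monotonicity from the integer restriction of the lift $\widetilde{f}\colon[0,n+1]\to[i,m+1+i]$ and, conversely, building a $\ZZ$-equivariant monotone map and interpolating; this is essentially the $\ZZ\!\Mon({}^{n\circlearrowright}\ZZ,{}^{m\circlearrowright}\ZZ)$ characterization in disguise, and it makes the monotonicity completely explicit at the price of heavier index bookkeeping. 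One such slip: with $\widetilde{f}(0)=i$ the image of $\{0,\ldots,n\}$ lies in $\{i,\ldots,i+m\}$, so the target reordering begins at $i$, which forces $j\equiv i-1 \pmod{m+1}$ rather than $j=i$; this is exactly the off-by-one your ``main obstacle'' paragraph anticipates and is repaired by the normalization you propose there. On balance, the paper's cut-point argument is shorter and makes the geometric origin of the cyclic reordering transparent (the cut point is where the linear order begins), while yours ties the lemma directly to the equivariant description of $\Hom_\Lambda$ and would generalize cleanly to the paracyclic setting.
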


\begin{proof}
    The condition is necessary, since we can cut out circles at an unmarked point to obtain a map of marked intervals. 
	
    On the other hand, given such $\varphi$, $i$, and $j$, we can represent $\varphi$ by a monotone map of marked intervals 
    \[ \begin{tikzcd}
    f \colon &[-3em] (I^\circ, \{i+1,\ldots,i\}) \arrow[r] & (I^\circ, \{j+1,\ldots,j\}).
    \end{tikzcd} \]
    The induced map between one-point compactifications can be identified with a map $\langle n\rangle \to\langle m\rangle$, and the underlying map is still $\varphi$. 
\end{proof}

\begin{notation}
    We denote the set of maps satisfying the condition of Lemma \ref{lem:underlying_cyc_maps} by $\Cyc(\underline{n}, \underline{m})$.
\end{notation}

The final step in our second combinatorial description is the following. 

\begin{prop}
    Let $\langle n\rangle$ and $\langle m\rangle$ be objects of $\Lambda$ and let 
    \[ \begin{tikzcd}
    U \colon &[-3em] \Hom_{\Lambda}(\langle n\rangle,\langle m\rangle)\arrow[r] & \on{Cyc}(\underline{n},\underline{m})
    \end{tikzcd} \]
    send $f \colon \langle m \rangle \rightarrow \langle n \rangle$ to its underlying map of sets. For $\varphi\in \Cyc(\underline{n},\underline{m})$, the fiber $U^{-1}(\varphi)$ is 
    \begin{enumerate}
		\item in bijection with $\ZZ_{n+1}$ if $\varphi$ is constant; and
		
		\item a singleton if $\varphi$ is non-constant. 
    \end{enumerate}
\end{prop}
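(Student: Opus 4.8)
The plan is to count lifts to universal covers, using the identification of $\Hom_\Lambda(\langle n\rangle,\langle m\rangle)$ with $\ZZ\!\Mon({}^{n\circlearrowright}\ZZ,{}^{m\circlearrowright}\ZZ)/\ZZ$ established above. A monotone $\ZZ$-equivariant map $\widetilde f\colon{}^{n\circlearrowright}\ZZ\to{}^{m\circlearrowright}\ZZ$ is a non-decreasing $\widetilde f\colon\ZZ\to\ZZ$ with $\widetilde f(k+n+1)=\widetilde f(k)+m+1$, hence is determined by its restriction to $\{0,1,\dots,n+1\}$ subject only to $\widetilde f(0)\le\widetilde f(1)\le\dots\le\widetilde f(n+1)=\widetilde f(0)+m+1$; the $\ZZ$-action adds $m+1$ to every value, and the underlying map of sets of the corresponding morphism sends $k$ to $\widetilde f(k)\bmod(m+1)$. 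Fix $\varphi\in\Cyc(\underline n,\underline m)$ and let $a_0\in\{0,\dots,m\}$ be the representative of $\varphi(0)$. Each $\ZZ$-orbit contains a unique lift with $\widetilde f(0)=a_0$, and two such normalized lifts representing the same morphism must coincide (evaluate at $0$). So $U^{-1}(\varphi)$ is in bijection with the set $L_\varphi$ of monotone maps $\widetilde f\colon\{0,\dots,n+1\}\to\ZZ$ with $\widetilde f(0)=a_0$, $\widetilde f(n+1)=a_0+m+1$, and $\widetilde f(k)\equiv\varphi(k)\pmod{m+1}$ for $0\le k\le n$.

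Monotonicity forces $a_0\le\widetilde f(k)\le a_0+m+1$, so within its residue class mod $m+1$ the value $\widetilde f(k)$ has a unique admissible choice when $\varphi(k)\ne\varphi(0)$ — the representative lying strictly between $a_0$ and $a_0+m+1$ — and exactly the two choices $a_0$ and $a_0+m+1$ when $\varphi(k)=\varphi(0)$. This is the computational heart of the argument.

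Suppose $\varphi$ is non-constant. Since $\varphi\in\Cyc(\underline n,\underline m)$, Lemma \ref{lem:underlying_cyc_maps} gives $L_\varphi\ne\emptyset$, so it remains to prove uniqueness. If $\widetilde f,\widetilde f'\in L_\varphi$ differ, they differ at some $k$ with $\varphi(k)=\varphi(0)$, say $\widetilde f(k)=a_0$ and $\widetilde f'(k)=a_0+m+1$. Choosing $k^*\in\{1,\dots,n\}$ with $\varphi(k^*)\ne\varphi(0)$, both maps agree at $k^*$ on a value $r$ with $a_0<r<a_0+m+1$; but $k^*<k$ forces $r\le\widetilde f(k)=a_0$ and $k^*>k$ forces $r\ge\widetilde f'(k)=a_0+m+1$, both absurd. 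Hence $|U^{-1}(\varphi)|=1$. If instead $\varphi$ is constant, then $\varphi(k)=\varphi(0)$ for all $k$, so a lift in $L_\varphi$ is a monotone function $\{0,\dots,n+1\}\to\{a_0,a_0+m+1\}$ with $\widetilde f(0)=a_0$ and $\widetilde f(n+1)=a_0+m+1$, which is exactly the datum of a threshold $t=\max\{k:\widetilde f(k)=a_0\}\in\{0,1,\dots,n\}$; every such $t$ occurs, so $|U^{-1}(\varphi)|=n+1$, giving the bijection with $\ZZ_{n+1}$.

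The argument is short once set up, and I expect the only delicate point to be the reduction in the first paragraph: correctly identifying $U^{-1}(\varphi)$ with the set of normalized monotone lifts, in particular checking that distinct normalized lifts yield distinct morphisms and that the constraint at the index $n+1$ (not just at $\{0,\dots,n\}$) is correctly imposed — it is this endpoint condition that produces the extra value $a_0+m+1$, and hence the cyclic group $\ZZ_{n+1}$ in the constant case.
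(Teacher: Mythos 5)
The paper states this proposition without proof, so there is nothing to compare against; your argument is correct and complete. It is also the natural generalization of what the paper does in its examples: your constant case with $n=1$, $m=0$ recovers exactly the two lifts of Example \ref{eg:Two_morphs_1to0}, and your threshold parameter $t=\max\{k:\widetilde f(k)=a_0\}$ is the combinatorial form of the paper's observation that such a morphism is determined by ``which segment is not collapsed.'' The reduction in your first paragraph is the only place requiring care, and you handle it correctly: each $\ZZ$-orbit of equivariant monotone lifts has a unique representative normalized by $\widetilde f(0)=a_0$, the normalization is compatible with the underlying map since every lift in the orbit satisfies $\widetilde f(0)\equiv\varphi(0)\pmod{m+1}$, and the endpoint constraint $\widetilde f(n+1)=a_0+m+1$ (equivalently, the degree-one condition) is what confines all values to $[a_0,a_0+m+1]$ and produces exactly two admissible values precisely on the residue class of $a_0$. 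The uniqueness argument in the non-constant case (comparing a forced interior value $r$ against the two boundary options at a differing index) and the count of $n+1$ monotone step functions in the constant case are both right.
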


A consequence of this proposition is another way to encode the data of a morphism in $\Lambda$, using slightly redundant information. 

\begin{cor}
    A morphism $\varphi \colon \langle n\rangle \to \langle m\rangle$ in $\Lambda$ is equivalently given by the data of a map $\underline{\varphi}\in \Cyc(\underline{n},\underline{m})$ together with a choice of linear order on each fiber of $\underline{\varphi}$ that is compatible with the cyclic order on $\langle n\rangle$.
\end{cor}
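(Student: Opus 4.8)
The plan is to unpack the statement of the preceding proposition into an explicit, reversible description of the data of a morphism. The proposition gives a map $U \colon \Hom_{\Lambda}(\langle n\rangle,\langle m\rangle) \to \Cyc(\underline{n},\underline{m})$ whose fibers are either a $\ZZ_{n+1}$-torsor (if $\underline{\varphi}$ is constant) or a singleton (if $\underline{\varphi}$ is non-constant). So the content of the corollary is to identify, in both cases, the extra datum of ``a linear order on each fiber of $\underline{\varphi}$ compatible with the cyclic order on $\langle n\rangle$'' with an element of the corresponding fiber $U^{-1}(\underline{\varphi})$, naturally. First I would recall that a compatible linear order on a subset $F \subseteq \underline{n}$, where $\underline{n}$ carries the cyclic order coming from $S^1$, is precisely the choice of a ``starting point'' — a cyclic order on $F$ is intrinsic from that on $\underline{n}$, and refining it to a linear order means picking which element of $F$ is smallest, so the set of compatible linear orders on $F$ is either empty (if $F = \emptyset$) or a torsor over the cyclic group $\ZZ_{|F|}$ on $|F|$ elements via ``rotate the basepoint.''

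The key steps, in order, are as follows. First, handle the non-constant case: if $\underline{\varphi}$ is non-constant, I would argue that each nonempty fiber $\underline{\varphi}^{-1}(k)$ is a set of \emph{consecutive} marked points in the cyclic order on $\underline{n}$ (this follows from the weak monotonicity built into the definition of $\Cyc(\underline{n},\underline{m})$ in Lemma \ref{lem:underlying_cyc_maps}), and that on a set of consecutive points the ambient linear order obtained by cutting $S^1$ at the relevant unmarked gap already restricts to a canonical compatible linear order. Hence in the non-constant case there is a \emph{unique} compatible choice of linear orders on the fibers, matching the fact that $U^{-1}(\underline{\varphi})$ is a singleton. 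Second, handle the constant case $\underline{\varphi} \equiv j$: here the single nonempty fiber is all of $\underline{n}$, which has $n+1$ elements, so the set of compatible linear orders on it is a $\ZZ_{n+1}$-torsor; I would then produce the bijection with $U^{-1}(\underline{\varphi}) \cong \ZZ_{n+1}$ by matching a linear order (equivalently, a choice of smallest element $i \in \underline{n}$) with the morphism $\langle n\rangle \to \langle 0\rangle = \langle m\rangle$ that collapses everything to $j$ and whose non-collapsed arc is the one running from $i$ forward — exactly as in Example \ref{eg:Two_morphs_1to0} and Example \ref{eg:collapseto1}. Third, I would check that these two correspondences assemble into a single bijection that is compatible with the torsor structures identified in the proposition, so that the data ``$\underline{\varphi} \in \Cyc(\underline{n},\underline{m})$ plus compatible linear orders on fibers'' is equivalent to ``a morphism in $\Lambda$.''

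I expect the main obstacle to be the constant case — specifically, pinning down the $\ZZ_{n+1}$-equivariance so that the bijection is canonical rather than just a bijection of sets. One has to verify that the $\ZZ_{n+1}$-action on $U^{-1}(\underline{\varphi})$ implicit in the proposition (which ultimately comes from the residual deck-transformation/rotation action, cf.\ the $\ZZ$-action on $\ZZ\!\Mon$ and the quotient description $\Hom_\Lambda(\langle n\rangle,\langle m\rangle) \cong \ZZ\!\Mon({}^{n\circlearrowright}\ZZ,{}^{m\circlearrowright}\ZZ)/\ZZ$) agrees under the correspondence with the ``rotate the basepoint'' action on compatible linear orders of $\underline{n}$. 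This is essentially bookkeeping about which arc of $S^1 \smallsetminus \underline{n}$ survives collapse, but it is the only place where something genuinely needs to be checked rather than read off; everything in the non-constant case is forced by Lemma \ref{lem:underlying_cyc_maps}. Compatibility with composition is not claimed in the corollary, so I would not address it.
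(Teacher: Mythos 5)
Your proposal is correct and follows the paper's (implicit) route: the corollary is stated there without proof as a direct consequence of the preceding proposition on the fibers of $U$, and your case analysis --- the unique compatible system of linear orders on the consecutive fibers of a non-constant $\underline{\varphi}$, versus the $\ZZ_{n+1}$-torsor of cyclic-order-refining linear orders on all of $\underline{n}$ when $\underline{\varphi}$ is constant --- is exactly the intended matching of that fiber count with the stated data. (Only a trivial slip: the set of linear orders on an empty fiber is a singleton, not empty, which is in fact what makes the count work.)
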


\section{An interval model and duality for the simplex category} \label{intervalmodelDelta}

We now turn to models based on marked subintervals and their relation to duality, starting with the simplex category in this section and then the cyclic category in the next section.  We mostly forgo proofs in these sections, as they are often analogous to those for the finite subset models.  

This approach gives another way to understand these categories geometrically, and it additionally provides a convenient way to understand the relationship between $\Delta$ and a subcategory $\nabla$ that is often called the \emph{interval category}.

%\subsection{The simplex and interval categories} 

%For our return to the simplex category, we want to discuss a kind of duality that relates the simplex category to a subcategory. This subcategory, denoted by $\nabla$, is often called the \emph{interval category}. Before we get to $\nabla$, however, we must first redefine the simplex category yet one more time. 

\subsection{The banded simplex category} 

To start, we look at another geometric definition.

\begin{definition}
    A \emph{banded interval} is a pair of spaces $(I^\circ,J)$ with $J\subseteq I^\circ$, satisfying the following conditions.
    \begin{itemize}
		\item The subspace $J$ is homeomorphic to a (nonempty) disjoint union of interval spaces, 
		\[ J\cong \coprod_{i=0}^K J_i. \]
		
		\item Each of the components $J_i\subseteq J\subseteq I^\circ$ extends to an inclusion of the closed interval $I$ into $I^\circ$.  That is, the closure $\overline{J_i}$ of $J_i$ in $I$ is homeomorphic to a closed interval.  
		
		\item The closures of the $J_i$ are disjoint in $I^\circ$. 
    \end{itemize}
    A \emph{morphism of banded intervals} is a monotone homeomorphism $f \colon (I^\circ,J)\to (I^\circ, K)$ that is also a map of pairs. We denote the space of morphisms of banded circles by $\Homeo^+((I^\circ,J),(I^\circ,K))$. 
\end{definition} 

\begin{remark} \label{rmk:isomofpairs}
    The latter two technical-seeming conditions to guarantee that there is an isomorphism of pairs
    \[ (I^\circ \amalg_{\overline{J}} \pi_0(J),\pi_0(J)) \cong (I^\circ,\underline{n}), \]
    where $J$ has $n+1$ path components. It is this isomorphism that we use to define the relation between our two versions of $\Delta$. 
\end{remark}

\begin{definition}
    The \emph{banded simplex category} is the topological category $\Delta_{\Band}$ whose objects are banded intervals and whose mapping spaces are 
    \[ \Hom_{\Delta_{\Band}}((I^\circ,J),(I^\circ,K)):= \Homeo^+((I^\circ,J),(I^\circ,K)). \]
\end{definition}

If we choose, for every banded interval $(I^\circ,J)$, an identification 
\[ (I^\circ \amalg_{\overline{J}} \pi_0(J),\pi_0(J)) \cong (I^\circ,\underline{n}) \]
for $n+1=|\pi_0(J)|$ as in Remark \ref{rmk:isomofpairs}, then every monotone homeomorphism of banded intervals $f \colon (I^\circ,J)\to (I^\circ, K)$ induces a monotone homotopy equivalence of marked intervals 
\[ \begin{tikzcd}
    (I^\circ,\underline{n}) \arrow[r] & (I^\circ,\underline{m}). 
\end{tikzcd} \]
This construction defines a functor 
\[ \begin{tikzcd}
    P \colon &[-3em] \Delta_{\Band}\arrow[r] & \Delta. 
\end{tikzcd} \]
Pictorially, the action of the functor $P$ on morphisms can be visualized by Figure \ref{fig:Functor_P}.  
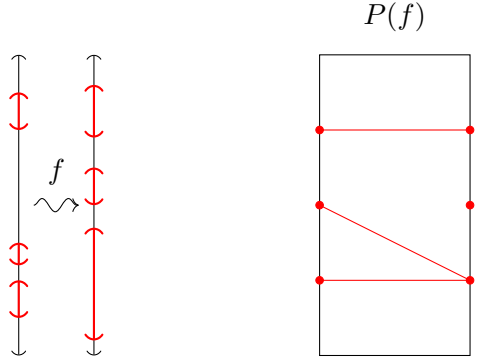
\begin{figure}[h!]
	\begin{tikzpicture}
		\draw[(-)] (0,-2) to (0,2);
		\draw[thick,red,(-)] (0,-1.5) to (0,-1);
		\draw[thick,red,(-)] (0,-0.8) to (0,-0.5);
		\draw[thick,red,(-)] (0,1) to (0,1.5);
		
		\draw[(-)] (1,-2) to (1,2);
		\draw[thick,red,(-)] (1,-1.8) to (1,-0.3);
		\draw[thick,red,(-)] (1,0) to (1,0.5);
		\draw[thick,red,(-)] (1,0.9) to (1,1.6);
		\begin{scope}[decoration=snake]
			\draw[->,decorate] (0.2,0) to node[label=above:$f$] {} (0.8,0);
		\end{scope}
		
		\draw (4,-2) rectangle (6,2);
		\foreach \y in {-1,0,1}{
		\draw[red,fill=red] (4,\y) circle (0.05); 
		\draw[red,fill=red] (6,\y) circle (0.05); 
		};
		\draw[red] (4,-1) to (6,-1);
		\draw[red] (4,0) to (6,-1);
		\draw[red] (4,1) to (6,1);
		\path (5,2.5) node {$P(f)$};
	\end{tikzpicture}
	\caption{A depiction of a morphism in $\Delta_{\on{Band}}$ and its image under $P$.} \label{fig:Functor_P}
\end{figure}

We can summarize this discussion with the following analogue to Proposition \ref{DeltaTopDK}.

\begin{prop}
    The functor $P \colon \Delta_{\Band} \rightarrow \Delta$ is a Dwyer-Kan equivalence.  
\end{prop}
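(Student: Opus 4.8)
The plan is to verify the two conditions in the definition of a Dwyer-Kan equivalence for the functor $P \colon \Delta_{\Band} \to \Delta$, mirroring the argument that established Proposition \ref{DeltaTopDK}. For the second condition (essential surjectivity on homotopy categories), I would first observe that $P$ is in fact surjective on objects: given a standard $(n+1)$-pointed interval $(I^\circ, \underline{n})$, one can thicken each marked point to a small closed subinterval to produce a banded interval whose image under $P$ is isomorphic to $(I^\circ,\underline{n})$ via the identification of Remark \ref{rmk:isomofpairs}. Since $P$ hits every object up to isomorphism, $\Ho(P)$ is certainly essentially surjective.

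For the first condition, I need to show that for banded intervals $(I^\circ, J)$ and $(I^\circ, K)$, the map
\[ \Homeo^+((I^\circ,J),(I^\circ,K)) \longrightarrow \Equiv^+((I^\circ,\underline{n}),(I^\circ,\underline{m})) \]
is a weak homotopy equivalence, where $n+1 = |\pi_0(J)|$ and $m+1 = |\pi_0(K)|$. By Proposition \ref{prop:equiv_comps_monotone}, the target is homotopy equivalent to the discrete set $\Mon([n],[m])$, so it suffices to show the source is homotopy discrete with the correct set of path components. The set of path components should be identified as follows: a morphism of banded intervals $f \colon (I^\circ,J) \to (I^\circ,K)$ must send each component $J_i$ homeomorphically into some component $K_{\varphi(i)}$ (a connected, open, order-convex subset of $J$ maps to such a subset of $K$, which lies in a single component), and the assignment $i \mapsto \varphi(i)$ is monotone; conversely every monotone $\varphi$ is realized. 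So $\pi_0 \Homeo^+((I^\circ,J),(I^\circ,K)) \cong \Mon([n],[m])$, matching the target. For contractibility of each path component, I would fix a model morphism $f_\varphi$ in a given component and use straight-line homotopies: the space of monotone homeomorphisms $(I^\circ,J) \to (I^\circ,K)$ lying over a fixed combinatorial pattern $\varphi$ is convex (a convex combination of two such homeomorphisms is again a monotone map sending each $J_i$ into $K_{\varphi(i)}$, and remains a homeomorphism onto its image with the required boundary behavior because the relevant order and openness conditions are preserved), hence contractible. This is essentially Lemma \ref{lem:equivcontract} adapted to the banded setting.

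The main obstacle I anticipate is the careful bookkeeping in the contractibility argument: unlike $\Equiv^+(X,X)$, where monotone homotopy equivalences form an honest convex set, here we must stay within \emph{homeomorphisms} that respect the banded structure, and a naive straight-line homotopy between two homeomorphisms of $I^\circ$ need not remain injective. The fix is to first reduce, within a fixed path component, to homeomorphisms that agree on a neighborhood of the endpoints and send the closure $\overline{J_i}$ affinely onto $\overline{K_{\varphi(i)}}$ (this reduction is itself a contraction onto a subspace), after which the straight-line homotopy visibly stays inside the space of monotone self-homeomorphisms of $I^\circ$ carrying $J$ to $K$ in the prescribed pattern. Once this convexity is in hand, functoriality and compatibility with composition are immediate from the construction of $P$, and the proof concludes exactly as in Section \ref{finitesubsetmodel}. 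I would keep the exposition brief here, as promised in the opening of the section, referring back to the analogous arguments for $\Delta_{\Top}$.
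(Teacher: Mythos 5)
Your proposal is correct and follows exactly the route the paper intends when it forgoes the proof as ``analogous to the finite subset model'': identify $\pi_0\Homeo^+((I^\circ,J),(I^\circ,K))$ with $\Mon([n],[m])$ via the induced map on components, and contract each path component by straight-line homotopies as in Lemma \ref{lem:equivcontract}. One remark: the obstacle you anticipate in your final paragraph is not actually there. A convex combination $tf+(1-t)g$ of two strictly increasing self-homeomorphisms of $I^\circ$ is again a strictly increasing continuous surjection of $I^\circ$, hence a homeomorphism, and since each component $K_{\varphi(i)}$ is convex it still carries $J_i$ into $K_{\varphi(i)}$; so the fibre of $\pi_0$ over $\varphi$ is literally convex and no preliminary normalization is needed. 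Indeed, the normalization you propose (sending $\overline{J_i}$ affinely \emph{onto} $\overline{K_{\varphi(i)}}$) cannot be achieved when $\varphi$ is non-injective, since an injective $f$ cannot map two distinct bands onto the same target band; dropping that paragraph leaves a complete and correct proof.
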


\subsection{The interval category and duality} 

Our next objective is to define a subcategory of $\Delta$ that is described in a way similar to the banded simplex category.  As before, we begin with this topological description, and then give a combinatorial characterization. 

\begin{definition}
    An \emph{extremally banded interval} is a pair of spaces $(I^\circ,J)$ with $J\subseteq I^\circ$, satisfying the following conditions.
    \begin{itemize}
		\item The subspace $J$ is homeomorphic to a (nonempty) disjoint union of interval spaces, 
		\[ J\cong \coprod_{i=0}^k J_i. \]
		
		\item Each of the components $J_i\subseteq J \subseteq I^\circ\subseteq I$ extends to an inclusion of the closed interval $I$ into $I=\overline{I^\circ}$. That is, the closure $\overline{J_i}$ of $J_i$ in $I$ is homeomorphic to a closed interval in $[0,1]$.
		
		\item The closures of the $J_i$ are disjoint in $I^\circ$. 
		
		\item There are distinct components $J_i$ and $J_j$ of $J$ such that $0$ is a limit point of $J_i$ and $1$ is a limit point of $J_j$. 
	\end{itemize}
    A \emph{morphism of extremally banded intervals} is a monotone homeomorphism $f \colon (I^\circ,J)\to (I^\circ, K)$ that is also a map of pairs. We denote the space of morphisms of extremally banded circles by $\Homeo^+((I^\circ,J),(I^\circ,K))$. 
\end{definition}

Pictorially, an extremally banded interval looks like a banded interval, but with a subinterval that starts at the left, and a subinterval that ends at the right, as shown in Figure \ref{fig:ext_banded_interval}. 
\begin{figure}[h!]
    \begin{tikzpicture}
		\draw[(-)] (0,0) to (4,0);
		\begin{scope}[thick,red]
			\draw[(-)] (0,0) to (0.5,0);
			\draw[(-)] (1,0) to (1.7,0); 
			\draw[(-)] (2.3,0) to (4,0);
		\end{scope}
	\end{tikzpicture}
    \caption{An extremally banded interval, with the bands pictured in red.} \label{fig:ext_banded_interval} 
\end{figure}
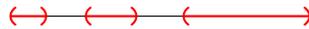

\begin{definition}
    The \emph{topological interval category} $\nabla_{\Band}$ has extremally banded intervals as objects and morphisms given by
    \[ \Hom_{\nabla_{\Band}}((I^\circ,J),(I^\circ,K))\cong \Homeo^+((I^\circ,J),(I^\circ,K)). \]
    The \emph{combinatorial interval category} is the subcategory $\nabla\subseteq \Delta$ with objects $[n]$ for $n\geq 1$ and morphisms $f \colon [n]\to [m]$ that \emph{preserve extremal elements}, in the sense that $f(0)=0$ and $f(n)=m$. 
\end{definition}

We can construct a functor 
\[ \begin{tikzcd}
    Q \colon &[-3em] \nabla_{\Band}\arrow[r] & \nabla 
\end{tikzcd} \] 
as follows. Given an extremally banded interval $(I^\circ,J)$, the orientation of $I$ gives a unique order-preserving identification $\{0,\ldots,n\}\cong \pi_0(J)$ where $n\geq 1$ by the last part of the previous definition. We then send $(I^\circ,J)$ to $[n]$.  We send a morphism $f \colon (I^\circ,J)\to (I^\circ,K)$ to the composite 
\[ \begin{tikzcd}
    {[n]} \arrow[r,"\cong"] & \pi_0(J) \arrow[r,"\pi_0(f)"] & \pi_0(K) \arrow[r,"\cong"] & {[m]}.
\end{tikzcd} \]

As with our topological model for $\Delta$, we do not lose any homotopical information by passing from the topological interval category $\nabla_{\Band}$ to its combinatorial counterpart $\nabla$, as the following proposition makes precise. 

\begin{prop} 
    The functor $Q$ is a Dwyer-Kan equivalence. 
\end{prop}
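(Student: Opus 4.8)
The plan is to verify the two conditions in the definition of a Dwyer--Kan equivalence, following the same template used for the finite-subset and banded models of $\Delta$. Essential surjectivity of $\Ho(Q)$ is immediate from the construction: every object of $\nabla$ has the form $[n]$ with $n\ge 1$, and any extremally banded interval $(I^\circ,J)$ with $n+1$ bands whose leftmost band abuts $0$ and whose rightmost band abuts $1$ satisfies $Q(I^\circ,J)=[n]$, so $Q$ is already surjective on objects, and $\Ho(Q)$ is in particular essentially surjective.

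For the mapping-space condition I would fix extremally banded intervals $(I^\circ,J)$ and $(I^\circ,K)$ with $Q(I^\circ,J)=[n]$ and $Q(I^\circ,K)=[m]$ and show that
\[ \Homeo^+\bigl((I^\circ,J),(I^\circ,K)\bigr)\longrightarrow \Hom_{\nabla}([n],[m]),\qquad f\longmapsto Q(f)=\pi_0(f), \]
is a weak homotopy equivalence onto the discrete set $\Hom_{\nabla}([n],[m])$. The one point genuinely new compared to the banded simplex category is that $Q(f)$ always preserves extremal elements: a monotone homeomorphism $f\colon I^\circ\to I^\circ$ is strictly increasing and surjective, so it extends to an increasing homeomorphism $\bar f\colon I\to I$ with $\bar f(0)=0$ and $\bar f(1)=1$; since the closure in $I$ of the leftmost band of $J$ is a closed interval abutting $0$, its image under $\bar f$ is a closed interval abutting $0$ and hence lies in the leftmost band of $K$, and symmetrically at $1$. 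Thus $Q(f)\in\Hom_{\nabla}([n],[m])$, and conversely every extremal-preserving monotone map is realized by a monotone homeomorphism of pairs, built by shrinking each band $J_i$ into $K_{\varphi(i)}$ in the combinatorially forced order exactly as in the proof that $P\colon\Delta_{\Band}\to\Delta$ is a Dwyer--Kan equivalence, the leftmost and rightmost bands now matching up compatibly with $\bar f(0)=0$ and $\bar f(1)=1$. This gives surjectivity on $\pi_0$.

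Injectivity on $\pi_0$ and contractibility of each path component then follow from straight-line homotopies, just as in Lemma \ref{lem:equivcontract}: if $f_0,f_1$ lie over the same $\pi_0$-map $\varphi$, the path $t\mapsto (1-t)f_0+tf_1$ consists of monotone homeomorphisms of $I^\circ$ (convex combinations preserve strict monotonicity, and the limits at $0$ and $1$ stay $0$ and $1$), each of which carries $J_i$ into the convex set $K_{\varphi(i)}\subseteq K$, so it is a path in $\Homeo^+((I^\circ,J),(I^\circ,K))$; the same formula with $f_1$ held fixed contracts each component. Hence $\Homeo^+((I^\circ,J),(I^\circ,K))\simeq \Hom_{\nabla}([n],[m])$ via $Q$, which is condition (1), and combined with essential surjectivity this shows $Q$ is a Dwyer--Kan equivalence.

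I expect the main (and only mildly nontrivial) obstacle to be the interaction between the extremal-banding conditions and monotone homeomorphisms: one must check both that such a homeomorphism necessarily forces the induced $\pi_0$-map to lie in $\Hom_{\nabla}$ (the $\bar f(0)=0$, $\bar f(1)=1$ observation) and that this is the \emph{only} constraint, i.e., that every extremal-preserving map is realized over suitably chosen bands. Everything else runs parallel to the already-established cases $\Delta_{\Top}\to\Delta$ and $\Delta_{\Band}\to\Delta$.
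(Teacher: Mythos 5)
Your proof is correct and is exactly the argument the paper has in mind: the paper omits the proof, remarking only that it is analogous to the finite-subset and banded-interval cases, and your write-up carries out that analogy, with the one genuinely new point (that a monotone homeomorphism of $I^\circ$ extends to fix $0$ and $1$, forcing $\pi_0(f)$ to preserve extremal elements, and that this is the only extra constraint) correctly identified and handled. The straight-line homotopy argument for convexity of the fibers over $\Hom_{\nabla}([n],[m])$ is the same device as in Lemma \ref{lem:equivcontract} and works as you describe.
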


%\subsection{Duality} \jbnote{maybe combine this subsection with the previous one?  It's pretty short.}

%Both of the dualities we describe in this paper arise via the same basic construction. 

We now describe the duality between $\Delta$ and $\nabla$, using the tools of banded intervals. Note that if $(I^\circ,J)$ is a banded interval, then $(I^\circ, I^\circ\setminus J)$ is an extremally banded interval. Performing this construction twice returns the original banded interval. 

Additionally, if $f \colon I^\circ \to I^\circ$ is a monotone homeomorphism, then $f^{-1} \colon I^\circ\to I^\circ$ is as well. Moreover, for $f\in \Homeo^+((I^\circ,J),(I^\circ,K))$, we have $f^{-1}\in \Homeo^+((I^\circ,I^\circ\setminus J),(I^\circ,I^\circ\setminus K))$. Since taking inverses of isomorphisms is functorial, we thus obtain functors
\[ \begin{tikzcd}
    D_{\Delta} \colon &[-3em] \Delta_{\on{Band}} \arrow[r] & \nabla_{\Band}
\end{tikzcd} \]
and 
\[ \begin{tikzcd}
	\nabla_{\Band} \arrow[r] & \Delta_{\Band}
\end{tikzcd} \]
given by taking complements of subsets of $I^\circ$ and inverses of homeomorphisms. The following first duality result is immediate, since these processes are inverse to one another. 

\begin{prop}
    The functor 
    \[ \begin{tikzcd}
    D_{\Delta} \colon &[-3em] \Delta_{\Band} \arrow[r] & \nabla_{\Band}
    \end{tikzcd} \]
    is a Dwyer-Kan equivalence.
\end{prop}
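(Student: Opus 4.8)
The plan is to prove the stronger assertion that $D_\Delta$ is an \emph{isomorphism} of topological categories; the Dwyer--Kan property then follows immediately, since both of its defining conditions become trivial. Recall that the construction ``replace a subset of $I^\circ$ by its complement, and a homeomorphism by its inverse'' also yields the functor $E \colon \nabla_{\Band} \to \Delta_{\Band}$ going the other way, so it suffices to check that $D_\Delta$ and $E$ are mutually inverse.

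First I would record the object-level statement that $(I^\circ, J) \mapsto (I^\circ, I^\circ \setminus J)$ is a well-defined bijection between banded intervals and extremally banded intervals. In one direction: for a banded interval the closures of the bands are finitely many pairwise-disjoint closed subintervals of $I^\circ$, hence bounded away from $0$ and $1$, so $I^\circ \setminus J$ is again a finite disjoint union of intervals whose two outermost components have $0$ and $1$ as limit points; thus it is extremally banded. The reverse direction is the same computation read backwards, the extremal condition on $J$ being exactly what forces every component of $I^\circ \setminus J$ to stay away from the endpoints. Since $I^\circ \setminus (I^\circ \setminus J) = J$, this construction is an involution, so $D_\Delta$ and $E$ agree on objects with an honest two-sided inverse. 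This is the sort of routine point-set verification deferred throughout this section.

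On morphisms, $D_\Delta$ is induced by $f \mapsto f^{-1}$; since forming inverses of isomorphisms is functorial, $D_\Delta$ and $E$ are genuine functors, and $(f^{-1})^{-1} = f$ gives $E \circ D_\Delta = \id_{\Delta_{\Band}}$ and $D_\Delta \circ E = \id_{\nabla_{\Band}}$ on the nose. Hence $D_\Delta$ is an isomorphism of topological categories: it induces a homeomorphism on each morphism space (with inverse supplied by $E$), so the first condition in the definition of a Dwyer--Kan equivalence holds; and $\Ho(D_\Delta)$ has the honest inverse $\Ho(E)$, so it is an isomorphism of categories and a fortiori essentially surjective, giving the second. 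The one place where any care is required is the involution lemma of the previous paragraph; everything else is formal, which is precisely why the result ``is immediate.''
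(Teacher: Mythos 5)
Your proposal is correct and takes essentially the same route as the paper: the paper also observes that complementation of bands together with inversion of homeomorphisms furnishes a strict inverse functor, so that $D_{\Delta}$ is an isomorphism of topological categories and hence trivially a Dwyer--Kan equivalence. You merely spell out the object-level involution (that $J \mapsto I^\circ \setminus J$ interchanges banded and extremally banded intervals) that the paper leaves implicit.
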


We thus obtain the following corollary, which can be also be proved via direct combinatorial construction.

\begin{cor} 
    There is an equivalence of categories 
    \[ \begin{tikzcd}
    \Delta \arrow[r] & \nabla.
    \end{tikzcd} \]
\end{cor}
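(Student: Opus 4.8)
The plan is to deduce the equivalence $\Delta \simeq \nabla$ from the chain of Dwyer-Kan equivalences already assembled, rather than by a direct combinatorial argument. First I would record that the functor $P \colon \Delta_{\Band} \to \Delta$ is a Dwyer-Kan equivalence, as is the analogous functor $Q \colon \nabla_{\Band} \to \nabla$, and the complement functor $D_{\Delta} \colon \Delta_{\Band} \to \nabla_{\Band}$ is a Dwyer-Kan equivalence (all three are proved in the preceding propositions). Since all of the mapping spaces involved are homotopy discrete, applying the homotopy category functor $\Ho(-)$ turns each of these Dwyer-Kan equivalences into an equivalence of ordinary categories: $\Ho(P)$, $\Ho(Q)$, and $\Ho(D_{\Delta})$ are equivalences. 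By construction $\Ho(\Delta_{\Band}) \simeq \Delta$ and $\Ho(\nabla_{\Band}) \simeq \nabla$ (indeed $P$ and $Q$ exhibit these equivalences), so composing gives
\[ \Delta \xleftarrow{\ \Ho(P)\ } \Ho(\Delta_{\Band}) \xrightarrow{\ \Ho(D_{\Delta})\ } \Ho(\nabla_{\Band}) \xrightarrow{\ \Ho(Q)\ } \nabla, \]
a zig-zag of equivalences, hence an equivalence $\Delta \to \nabla$ (choosing a quasi-inverse of $\Ho(P)$).

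The key steps, in order, are: (1) invoke that a Dwyer-Kan equivalence between topological categories with homotopy-discrete mapping spaces induces an equivalence on homotopy categories — this follows because condition (1) of the Dwyer-Kan definition says the mapping spaces map by weak equivalences, which between homotopy-discrete spaces means the induced maps on $\pi_0$ are bijections, i.e.\ $\Ho(F)$ is fully faithful, while condition (2) gives essential surjectivity; (2) identify $\Ho(\Delta_{\Band})$ with $\Delta$ and $\Ho(\nabla_{\Band})$ with $\nabla$ via $\Ho(P)$ and $\Ho(Q)$; (3) transport $\Ho(D_{\Delta})$ across these identifications to obtain the desired functor $\Delta \to \nabla$; (4) observe it is an equivalence since it is a composite of equivalences.

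I expect the only real subtlety to be bookkeeping about which functors point in which direction and checking that the composite is genuinely a functor $\Delta \to \nabla$ rather than a mere zig-zag — this is resolved by picking a quasi-inverse to $\Ho(P)$, or, more cleanly, by noting that $P$ admits an explicit section up to natural isomorphism coming from the chosen identifications $(I^\circ \amalg_{\overline{J}} \pi_0(J), \pi_0(J)) \cong (I^\circ,\underline{n})$ in Remark \ref{rmk:isomofpairs}. One could alternatively give the hands-on description: the equivalence sends $[n]$ to $[n]$ and a monotone map $\varphi \colon [n] \to [m]$ to the monotone map $\nabla$-morphism obtained by taking "complementary intervals," i.e.\ the order-dual construction that interchanges the roles of points and gaps between points; but since the paper explicitly offers the combinatorial proof as an aside ("which can also be proved via direct combinatorial construction"), the categorical argument above is the natural one to present, and it reuses machinery already established. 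No step requires more than routine verification once the homotopy-discreteness observation is in hand.
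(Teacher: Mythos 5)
Your argument is exactly the paper's: the corollary is deduced by combining the Dwyer--Kan equivalences $P$, $Q$, and $D_{\Delta}$ and passing to homotopy categories, using that homotopy-discrete mapping spaces make a Dwyer--Kan equivalence descend to an equivalence of ordinary categories. Your write-up just makes explicit the bookkeeping that the paper leaves implicit in the phrase ``we thus obtain the following corollary,'' so it is correct and takes essentially the same approach.
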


\section{An interval model and duality for the cyclic category} \label{intervalmodelLambda}

In this section, we translate the constructions from the previous section to the cyclic setting.  In this case, the dual category $\nabla$ is replaced by the opposite of the category $\Lambda$, which is simply $\Lambda$ itself, showing that the cyclic category is actually self-dual.  We use this approach to arrive at a more traditional combinatorial description of $\Lambda$ and discuss its canonical factorization.  

\subsection{The banded cyclic category and duality}

Following our banded interval model of $\Delta$, we now construct a an analogous topological model for $\Lambda$. 

\begin{definition}
    A \emph{banded circle} is a pair $(S^1,J)$ of topological spaces such that $J\subseteq S^1$ is homeomorphic to a disjoint union of finitely many interval spaces whose closures in $S^1$ are disjoint. 
	
    A \emph{monotone homeomorphism of banded circles} $f \colon (S^1,J)\to (S^1,K)$ is a monotone homeomorphism $f \colon S^1\to S^1$ such that $f(J)\subseteq K$. We denote by 
    \[ \Homeo^+((S^1,J), (S^1,K))\subseteq C^0(S^1,S^1) \] 
    the subspace of monotone homeomorphisms of banded circles. 
\end{definition} 

A schematic image of a banded circle is provided in Figure \ref{fig:Banded_circle}. 
\begin{figure}[h] 
	\begin{center}
		\begin{tikzpicture}
			\draw (0,0) circle (2); 
			\foreach \x/\y in {0/30, 100/110), 190/270}{
				
			};
			\draw[thick,red,(-|] (0:2) arc (0:30:2);
			\draw[thick,red,(-)] (100:2) arc (100:110:2);
			\draw[thick,red,|-|] (190:2) arc (190:270:2);
		\end{tikzpicture}
	\end{center}
	\caption{A banded circle with $J\subseteq S^1$ marked in red. Note that the intervals which make up $J$ can be open, closed, or half-open.} \label{fig:Banded_circle}
\end{figure}
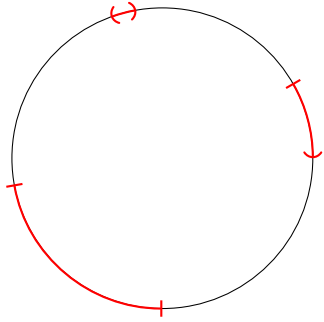  

\begin{definition}
    The \emph{banded cyclic category} $\Lambda_{\Band}$ has objects banded circles and spaces of morphisms given by $\Homeo^+((S^1,J),(S^1,K))$. 
\end{definition}

As with banded intervals, we can identify 
\[ \left(S^1\coprod_J \pi_0(J),\pi_0(J)\right) \cong (S^1,\underline{n}). \]
This construction yields a functor 
\[ \begin{tikzcd}
    \Lambda_{\Band}\arrow[r] & \Lambda 
\end{tikzcd} \]
as in the simplicial case. Once again, the process of passing from the topological definition to the combinatorial one does not lose homotopical information.

\begin{prop}  
    The functor 
    \[ \begin{tikzcd}
    \Lambda_{\Band}\arrow[r] & \Lambda 
    \end{tikzcd} \]
    is a Dwyer-Kan equivalence. 
\end{prop}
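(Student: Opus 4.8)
The plan is to mirror the proof of the analogous statement for $\Delta_{\Band} \to \Delta$ (and of Proposition \ref{DeltaTopDK}), verifying the two defining conditions of a Dwyer--Kan equivalence separately. For essential surjectivity of $\Ho(\Lambda_{\Band}) \to \Lambda$, note that every object $\langle n\rangle = (S^1,\underline{n})$ of $\Lambda$ is literally a banded circle whose bands are the singletons of $\underline{n}$ (degenerate intervals, or small open neighborhoods if one insists on genuine intervals, since the identification $(S^1 \amalg_J \pi_0(J), \pi_0(J)) \cong (S^1,\underline{n})$ collapses each band to a point). Thus the functor is already surjective on objects up to isomorphism, so condition (2) is immediate; this parallels the remark after Proposition \ref{DeltaTopDK} that for $\Delta$ the second condition was automatic.

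The substantive part is condition (1): for banded circles $(S^1,J)$ and $(S^1,K)$ with $|\pi_0(J)| = n+1$ and $|\pi_0(K)| = m+1$, the continuous map
\[ \Homeo^+((S^1,J),(S^1,K)) \longrightarrow \Equiv^+(\langle n\rangle, \langle m\rangle) \]
is a weak homotopy equivalence. First I would reduce to computing the homotopy types of both sides. By the proposition already established (the mapping spaces in $\Lambda_{\Top}$ are homotopy discrete), the target $\Equiv^+(\langle n\rangle,\langle m\rangle)$ is homotopy equivalent to the discrete set $\Hom_\Lambda(\langle n\rangle,\langle m\rangle)$. So it suffices to show the source $\Homeo^+((S^1,J),(S^1,K))$ is also homotopy discrete, with $\pi_0$ mapping bijectively onto $\Hom_\Lambda(\langle n\rangle,\langle m\rangle)$ under $P$. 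For the contractibility of each path component, I would adapt the argument behind Lemma \ref{lem:equivcontract}: a monotone homeomorphism $S^1 \to S^1$ is determined by a monotone degree-one lift to universal covers, and the space of such lifts fixing the combinatorial data (which bands go where, and the cyclic order data on fibers) is convex up to reparametrization, so straight-line homotopies on the lifts give an explicit deformation retraction onto a chosen representative; one checks these homotopies stay within $\Homeo^+((S^1,J),(S^1,K))$ because monotonicity, the degree-one condition, and the band-incidence conditions are all preserved under convex combinations of lifts. For the bijection on $\pi_0$, the identification $(S^1 \amalg_J \pi_0(J), \pi_0(J)) \cong (S^1,\underline n)$ and functoriality of $\pi_0$ show $P$ is well-defined on path components; surjectivity follows because any marked-interval representative of a class in $\Hom_\Lambda(\langle n\rangle,\langle m\rangle)$ can be ``fattened'' to a homeomorphism of banded circles (thickening each marked point to a tiny band and sending bands to bands homeomorphically), and injectivity follows from the deformation-retraction description of components.

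The main obstacle I anticipate is the bookkeeping around the gluing identification $(S^1 \amalg_J \pi_0(J),\pi_0(J)) \cong (S^1,\underline n)$ and the fact that bands may be open, closed, or half-open: one must check that a monotone homeomorphism of banded circles genuinely descends to a \emph{monotone homotopy equivalence} of the collapsed marked circles (not merely a continuous map) and that the induced map on $\pi_0$ together with the boundary-behavior of each band encodes exactly the linear-order-on-fibers data of the corresponding morphism in $\Lambda$ (cf. the corollary at the end of Section \ref{finitesubsetLambda}). Since the paper explicitly says proofs in these sections are ``often analogous to those for the finite subset models,'' I would keep the argument brief, citing the already-proven homotopy-discreteness of $\Lambda_{\Top}$ and pointing to the banded-$\Delta$ case for the details of the collapse construction.
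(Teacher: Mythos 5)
Your proposal is correct and matches the approach the paper intends: the paper omits the proof entirely, remarking only that the arguments in these sections are analogous to those for the finite-subset models, and your argument --- homotopy discreteness of $\Homeo^+((S^1,J),(S^1,K))$ via straight-line homotopies of degree-one lifts to universal covers, combined with the already-established homotopy discreteness of the mapping spaces of $\Lambda_{\Top}$ and a check that $\pi_0$ of both sides identifies with $\Hom_\Lambda$ --- is precisely that analogy carried out. The bookkeeping you flag (descent of a homeomorphism of banded circles to a monotone homotopy equivalence of the collapsed circles, and matching the component data with the linear-orders-on-fibers description) is real but routine, and does not constitute a gap.
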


%\subsection{Cyclic duality} \jbnote{Same as in previous section: I'm inclined to combine subsections bcause they are short.}

%\jbnoteil{Somewhere we should remark on the fact that the analogue of $\nabla$ is $\Lambda^{\op}$, making $\Lambda$ self-dual!}

Now, we show that the exact same construction that yields the duality between $\nabla$ and $\Delta$ extends to the cyclic setting.  Given a banded circle $(S^1,J)$, we can define a new banded circle $(S^1,J^c)$, where $J^c:=S^1\setminus J$ is the complement of $J$ in $S^1$. Since the inverse of a monotone homeomorphism of banded circles $f \colon (S^1,J)\to (S^1,K)$ is a monotone homeomorphism $f^{-1} \colon (S^1,K^c)\to (S^1,J^c)$ of banded circles, this construction yields a functor 
\[ \begin{tikzcd}
    D_{\Lambda} \colon  &[-3em] \Lambda_{\Band}^{\op} \arrow[r] & \Lambda_{\Band}. 
\end{tikzcd} \] 
It follows from construction that $D^{\op}_\Lambda\circ D_\Lambda$ is the identity functor, and so we obtain the following result. 

\begin{prop}
    The functor 
    \[ \begin{tikzcd}
		D_{\Lambda} \colon  &[-3em] \Lambda_{\Band}^{\op} \arrow[r] & \Lambda_{\Band} 
    \end{tikzcd} \]
    is an isomorphism of topological categories. 
\end{prop}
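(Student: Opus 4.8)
The plan is to exhibit an explicit two-sided inverse, namely the opposite functor $D_\Lambda^{\op}\colon \Lambda_{\Band}\to\Lambda_{\Band}^{\op}$, which is given by the very same recipe --- complementation on objects, inversion on morphisms --- and then to check that both composites are the relevant identity functors. Almost everything is bookkeeping, much of it already carried out in the paragraphs preceding the statement; the two points that deserve explicit mention are that complementation does not leave the class of banded circles, and that inversion of homeomorphisms is continuous for the compact-open topology.

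First I would confirm that $D_\Lambda$ is a well-defined topological functor. On objects, if $(S^1,J)$ is a banded circle then $J$ is a finite disjoint union of interval spaces with pairwise disjoint closures in $S^1$, and its complement $J^c=S^1\setminus J$ is again such a subset (removing finitely many arcs with disjoint closures from $S^1$ leaves finitely many arcs with disjoint closures), so $(S^1,J^c)$ is a banded circle. On morphisms, a morphism of $\Lambda_{\Band}^{\op}$ from $(S^1,K)$ to $(S^1,J)$ is by definition a monotone homeomorphism $f\colon S^1\to S^1$ with $f(J)\subseteq K$; its inverse $f^{-1}$ is again a monotone homeomorphism of $S^1$ (the lift of $f$ to the universal cover $\mathbb{R}\to S^1$ is a non-decreasing self-homeomorphism of $\mathbb{R}$, hence strictly increasing, hence has strictly increasing inverse, which is a lift of $f^{-1}$), and since $f$ is a bijection the inclusion $f(J)\subseteq K$ is equivalent to $f^{-1}(K^c)\subseteq J^c$. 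Thus $f^{-1}$ is a morphism $(S^1,K^c)\to(S^1,J^c)$ in $\Lambda_{\Band}$, that is, a morphism $D_\Lambda(S^1,K)\to D_\Lambda(S^1,J)$, as required. Functoriality is then formal: $(-)^{-1}$ preserves identities and reverses composition, which is exactly what is needed for the assignment to define a functor out of an opposite category. Finally, on each morphism space $D_\Lambda$ is the inversion map $f\mapsto f^{-1}$, which is continuous for the compact-open topology because $S^1$ is compact Hausdorff; restricting to the subspaces consisting of maps of pairs does not disturb this.

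Next I would note that the opposite functor $D_\Lambda^{\op}$ runs $(\Lambda_{\Band}^{\op})^{\op}=\Lambda_{\Band}\to\Lambda_{\Band}^{\op}$ and is given by the identical recipe $J\mapsto J^c$, $f\mapsto f^{-1}$; its well-definedness and continuity are the previous paragraph verbatim, with source and target interchanged. Composing, $D_\Lambda^{\op}\circ D_\Lambda\colon\Lambda_{\Band}^{\op}\to\Lambda_{\Band}^{\op}$ sends $(S^1,J)$ to $(S^1,(J^c)^c)=(S^1,J)$ and $f$ to $(f^{-1})^{-1}=f$, so it equals $\id_{\Lambda_{\Band}^{\op}}$; symmetrically $D_\Lambda\circ D_\Lambda^{\op}=\id_{\Lambda_{\Band}}$. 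Hence $D_\Lambda$ is an isomorphism of topological categories, with inverse $D_\Lambda^{\op}$.

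I do not anticipate a genuine obstacle --- the statement is essentially the formal self-duality of complementation in $S^1$ --- but the step most prone to slips is the well-definedness check on morphisms, where the opposite must be tracked carefully: it is precisely the equivalence $f(J)\subseteq K \iff f^{-1}(K^c)\subseteq J^c$ that makes the source and target of $D_\Lambda(f)$ come out correctly. It is also this symmetry --- which sidesteps the ``extremally banded'' variant needed in the interval case of Section \ref{intervalmodelDelta} --- that upgrades the duality to a strict isomorphism $\Lambda_{\Band}^{\op}\cong\Lambda_{\Band}$ and thereby exhibits $\Lambda$ itself as self-dual.
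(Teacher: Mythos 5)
Your proposal is correct and follows the same route as the paper, whose entire argument is the observation that $D_\Lambda^{\op}\circ D_\Lambda$ is the identity because complementation and inversion are involutive; you simply spell out the well-definedness and continuity checks (including the key equivalence $f(J)\subseteq K \iff f^{-1}(K^c)\subseteq J^c$) that the paper leaves implicit in the paragraph preceding the statement. No changes needed.
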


As in the simplicial case, we obtain duality for the usual category $\Lambda$, but the statement is more striking in this case, since we obtain that $\Lambda$ is self-dual.

\begin{cor} 
    There is an isomorphism of categories 
    \[ \begin{tikzcd}
	\Lambda^{\op} \arrow[r] & \Lambda.
    \end{tikzcd} \]
\end{cor}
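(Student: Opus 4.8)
The plan is to derive the corollary immediately from the preceding proposition, exactly as the simplicial case derived $\Delta\simeq\nabla$ from the Dwyer-Kan equivalence $D_\Delta\colon\Delta_{\Band}\to\nabla_{\Band}$. Concretely, the proposition asserts that $D_\Lambda\colon\Lambda_{\Band}^{\op}\to\Lambda_{\Band}$ is an isomorphism of \emph{topological} categories. I would first observe that any isomorphism of topological categories induces an isomorphism on homotopy categories, since $\Ho(-)$ is functorial (it is applied objectwise and respects composition, as noted after the definition of the homotopy category) and sends the inverse functor to the inverse. Hence $\Ho(D_\Lambda)\colon\Ho(\Lambda_{\Band}^{\op})\to\Ho(\Lambda_{\Band})$ is an isomorphism of ordinary categories.

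Next I would identify the two sides with $\Lambda$ up to the appropriate notion of sameness. We have the Dwyer-Kan equivalence $\Lambda_{\Band}\to\Lambda$ from the previous proposition, which in particular induces an equivalence $\Ho(\Lambda_{\Band})\simeq\Ho(\Lambda)=\Lambda$ (a Dwyer-Kan equivalence is, by definition, an equivalence on homotopy categories, being essentially surjective and — since the mapping spaces are homotopy discrete — inducing bijections $\pi_0\Map_{\Lambda_{\Band}}(X,Y)\to\pi_0\Map_\Lambda(PX,PY)$). Applying the same functor to $\Lambda_{\Band}^{\op}$, or equivalently taking opposites, gives $\Ho(\Lambda_{\Band}^{\op})=\Ho(\Lambda_{\Band})^{\op}\simeq\Lambda^{\op}$. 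Stringing these together,
\[
\Lambda^{\op}\simeq\Ho(\Lambda_{\Band})^{\op}=\Ho(\Lambda_{\Band}^{\op})\xrightarrow{\ \Ho(D_\Lambda)\ }\Ho(\Lambda_{\Band})\simeq\Lambda,
\]
which yields the desired equivalence (indeed the statement claims an isomorphism, which one gets if one uses a fixed choice of identifications $\left(S^1\amalg_J\pi_0(J),\pi_0(J)\right)\cong(S^1,\underline n)$ to pin down a strict inverse on objects, just as in the banded simplex case).

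The only real subtlety — and the step I would be most careful about — is bookkeeping the variance: $D_\Lambda$ has source $\Lambda_{\Band}^{\op}$, so one must check that the induced functor lands the right way, i.e.\ that complementation together with inversion of homeomorphisms genuinely reverses the direction of morphisms, so that $\Ho(D_\Lambda)$ is a functor $\Lambda^{\op}\to\Lambda$ and not $\Lambda\to\Lambda$. This is exactly what the proposition records ($f\colon(S^1,J)\to(S^1,K)$ gives $f^{-1}\colon(S^1,K^c)\to(S^1,J^c)$), so no new work is needed; one just has to be disciplined about opposite categories. Beyond that, everything is formal: no combinatorics, no topology, just functoriality of $\Ho(-)$ and the two Dwyer-Kan equivalences already in hand. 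Alternatively, and perhaps cleaner for the write-up, one can skip homotopy categories of the banded models and simply remark that the self-duality $D_\Lambda$ descends directly along the quotient functors $\Lambda_{\Band}\to\Lambda$ because complementation is compatible with the identifications of Remark-type ``$\left(S^1\amalg_J\pi_0(J),\pi_0(J)\right)\cong(S^1,\underline n)$''; I would present whichever is shorter.
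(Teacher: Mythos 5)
Your derivation is correct and is exactly the route the paper intends: the corollary is stated without proof as an immediate consequence of the proposition that $D_{\Lambda}\colon \Lambda_{\Band}^{\op}\to\Lambda_{\Band}$ is an isomorphism of topological categories, combined with the Dwyer-Kan equivalence $\Lambda_{\Band}\to\Lambda$. Your added care about variance and about upgrading the resulting equivalence to an isomorphism (via the fact that complementation preserves the number of components, so the induced functor is bijective on the objects $\langle n\rangle$ and fully faithful) fills in precisely the bookkeeping the paper leaves implicit.
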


\subsection{Compactification} \label{subsubsec:comp} 

The topological approach from intervals also provides a way to view $\Delta$ as a subcategory of $\Lambda$. Let us denote the one-point compactification of $I^\circ$ by $(I^\circ)^\ast$ and choose a homeomorphism $\varphi \colon (I^\circ)^\ast\to S^1$ such that the composite 
\[ \begin{tikzcd}
    I^\circ \arrow[r,hookrightarrow] & (I^\circ)^\ast\arrow[r,"\varphi","\cong"'] & S^1 
\end{tikzcd} \]
is monotone. Given a banded interval $(I^\circ, J)$, the pair $(S^1,\varphi(J))$ is a banded circle. Similarly, given a morphism $f \colon (I^\circ,J)\to (I^\circ,K)$ of banded intervals, the induced map on 1-point compactifications 
\[ \begin{tikzcd}
    \iota(f) \colon &[-3em] (S^1,\varphi(J))\arrow[r] & (S^1, \varphi(K)) 
\end{tikzcd} \] 
is a morphism of banded circles. We thus obtain a functor 
\[ \begin{tikzcd}
    \iota \colon &[-3em] \Delta_{\Band}\arrow[r] & \Lambda_{\Band}. 
\end{tikzcd} \]

\begin{prop}
    The induced functor 
    \[ \begin{tikzcd}
    \iota \colon &[-3em] \Delta \arrow[r] & \Lambda
    \end{tikzcd} \]
    is faithful and bijective on objects. 
\end{prop}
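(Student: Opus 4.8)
The plan is to verify the two claims — faithfulness and bijectivity on objects — more or less directly from the construction of $\iota \colon \Delta_{\Band} \to \Lambda_{\Band}$ together with the Dwyer--Kan equivalences already established. For bijectivity on objects, note that the objects of $\Delta$ are the ordinals $[n]$ for $n \geq 0$, and the objects of $\Lambda$ are the $\langle n \rangle$ for $n \geq 0$; I would check that $\iota$ sends $[n]$ to $\langle n \rangle$. Going back to the topological level, a banded interval $(I^\circ, J)$ with $n+1$ path components is sent to the banded circle $(S^1, \varphi(J))$, which again has $n+1$ path components (since $\varphi$ is a homeomorphism and the point at infinity, lying in $I^\circ \setminus J$ provided we have arranged the bands to miss a neighborhood of the endpoints, does not merge components). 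Passing to the combinatorial categories via the chosen identifications $(I^\circ \amalg_{\overline{J}} \pi_0(J), \pi_0(J)) \cong (I^\circ, \underline{n})$ and $(S^1 \amalg_J \pi_0(J), \pi_0(J)) \cong (S^1, \underline{n})$, the object $[n]$ goes to $\langle n \rangle$, and this correspondence $n \mapsto n$ is a bijection on object sets.

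For faithfulness, I would argue at the level of underlying maps of sets. By Proposition~\ref{prop:equiv_comps_monotone}, a morphism $[n] \to [m]$ in $\Delta$ is determined by its underlying monotone map $\underline{n} \to \underline{m}$, i.e. an element of $\Mon([n],[m])$; in particular $\Delta$ embeds faithfully into $\Set$. On the cyclic side, the key observation is that the forgetful functor $U \colon \Lambda \to \Set$ of Lemma~\ref{lem:underlying_cyc_maps} is compatible with $\iota$: the one-point compactification construction sends a monotone map of marked intervals to a morphism of marked circles with the \emph{same} underlying map of sets (as recorded in the proof of Lemma~\ref{lem:underlying_cyc_maps}). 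Hence for morphisms $f, g \colon [n] \to [m]$ in $\Delta$ with $\iota(f) = \iota(g)$, applying $U$ gives $\underline{f} = \underline{g}$ as maps of sets, and since $U|_\Delta$ is injective on hom-sets we conclude $f = g$. Thus $\iota$ is faithful.

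The step I expect to require the most care is the interaction between the compactification functor $\iota \colon \Delta_{\Band} \to \Lambda_{\Band}$ and the two homotopy-category (i.e. Dwyer--Kan) quotients — specifically, checking that $\iota$ descends to a well-defined functor $\Delta \to \Lambda$ on path components in a way compatible with the chosen identifications used to define $P \colon \Delta_{\Band} \to \Delta$ and $\Lambda_{\Band} \to \Lambda$. Concretely, one must confirm that the point at infinity can always be taken to lie strictly between two bands, so that $\varphi(J)$ is genuinely a banded circle and the induced order-preserving identification of $\pi_0$ is the expected one; this is where the technical hypotheses on banded intervals (closures disjoint, extending to closed intervals in $I$) do their work. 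Once that compatibility is in place, faithfulness follows from the $\Set$-level argument above and bijectivity on objects is immediate, so this compatibility check is really the crux. I would spell it out by noting that any banded interval is isomorphic (in $\Delta_{\Band}$) to one whose bands avoid a neighborhood of $\{0,1\}$, and then $\varphi$ carries such a configuration to a banded circle with a distinguished ``gap'' at $\varphi(\infty)$, recovering exactly the subcategory picture promised in the section introduction.
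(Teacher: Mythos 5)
Your proof is correct. Note that the paper actually states this proposition without giving a proof, so there is nothing to compare against; your argument supplies exactly the missing content. The two halves are both sound: bijectivity on objects is immediate from $[n]\mapsto\langle n\rangle$, and for faithfulness the key observation — that the underlying-set functor $U\colon\Lambda\to\Set$ satisfies $U\circ\iota\cong$ (underlying set functor on $\Delta$), so $\iota(f)=\iota(g)$ forces $\underline{f}=\underline{g}$ and hence $f=g$ by Proposition \ref{prop:equiv_comps_monotone} — is the right one; the non-injectivity of $U$ on $\Lambda$-morphisms (constant maps have fibers of size $n+1$) is irrelevant in this direction. One small simplification: your worry about the point at infinity merging band closures, and your proposed fix of replacing a banded interval by an isomorphic one whose bands avoid a neighborhood of $\{0,1\}$, is unnecessary. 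The definition of a banded interval already requires each $\overline{J_i}$ to be a closed interval contained in $I^\circ$, so the closures are compact in $(0,1)$ and automatically bounded away from the endpoints; this is just as well, since a monotone self-homeomorphism of $I^\circ$ extends to one of $[0,1]$ fixing the endpoints and so could never move a band off an endpoint anyway. (A slicker proof of faithfulness would read it off from the canonical factorization of Proposition \ref{prop:Lambda_CF} by restricting to $\psi=\id$, but since that result appears later in the paper your direct argument is the appropriate one here.)
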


In much the same way as with $\Delta$, we can construct a generators-and-relations presentation of the cyclic category $\Lambda$.  The proof is very similar to the one for the analogous presentation of $\Delta$, with the additional complication that one must first show that morphisms in $\Lambda$ factor uniquely as an automorphism followed by a morphism in the image of $\iota \colon \Delta\to \Lambda$. 

% We specify this presentation here, following \cite{loday}, but we do not give the proof.  It is not difficult, but it is tedious and the uses the same techniques used to establish the simplicial identities in concert with the canonical factorization.  \jbnote{We don't discuss this until the next subsection -maybe rearrange somehow?}

\begin{definition} \label{defn:cyclic_idents}
    The cyclic category $\Lambda$ is generated by the face maps $d_i \colon [n-1]\to [n]$ of $\Delta$, the degeneracy maps $s_i \colon [n+1]\to [n]$ in $\Delta$, and the cyclic shift-by-one automorphisms $\tau_n \colon [n]\to [n]$, subject to:
    \begin{enumerate}
		\item the \emph{simplicial identities}
		\begin{align*}
		  d_j\circ d_i&=d_i\circ d_{j-1} & i<j\\
		  s_j\circ s_i&=s_i\circ s_{j+1} & i\leq j\\
		  s_j\circ d_i&=
		  \begin{cases}
				d_i \circ s_{j-1} & i<j\\
				\id_{[n]} & i=j,j+1\\
				d_{i-1}\circ s_j & i>j+1;
		  \end{cases} 
		\end{align*}
        and
		
		\item the \emph{cyclic identities} 
		\begin{align*}
		  \tau\circ d_i & = 
		  \begin{cases}
				d_{i-1}\circ \tau_{n-1} & 1\leq i\leq n\\ 
				d_n & i=0
		  \end{cases} \\
			\tau_n\circ s_i&=\begin{cases}
				s_{i-1}\circ \tau_{n+1} & 1\leq i\leq n\\
				s_n \circ \tau^2_{n+1} & i=0
		  \end{cases}\\
		  \tau_n^{n+1}&=\id_{[n]}.
		\end{align*}
    \end{enumerate}
\end{definition}

\subsection{The canonical factorization} 

We now turn to a final property of the cyclic category that clarifies the way in which $\Lambda$ is a well-behaved extension of $\Delta$: the canonical factorization of morphisms in $\Lambda$ into automorphisms and morphisms in $\Delta$.  To this end, we first describe the one-point compactification functor $\iota$ in combinatorial terms. 

\begin{construction} \label{const:include_Delta_in_Lambda}
    An object $[n]$ of $\Delta$ uniquely corresponds to a morphism 
    \[ \begin{tikzcd}
		\langle n\rangle \arrow[r] & \langle 0\rangle, 
    \end{tikzcd} \]
    in $\Lambda$, where $[n]$ specifies the linear order on the fiber of the underlying map of sets. Consequently, we obtain a \emph{cyclic closure functor}  
    \[ \begin{tikzcd}[row sep=0em]
		\iota \colon &[-3em] \Delta \arrow[r] & \Lambda 
    \end{tikzcd} \]
    defined on objects by
    \[ \begin{tikzcd}[row sep=0em]		
    & {[n]} \arrow[r,mapsto] & \langle n\rangle.
    \end{tikzcd} \] 
    On morphisms, the underlying map of sets is uniquely determined, and the linear orders on the fibers are those induced by the linear order on the source. 
\end{construction}

%\begin{notation}
%    Let $T$ be a finite linearly ordered set, and $\{P_t\}_{t\in T}$ a collection of (possibly empty) linearly ordered sets, the union of which is nonempty. We denote the lexicographic linear order on $\bigcup_{t\in T} P_t$ by $L^T P_t$. Note that $\iota(L^T P_t)= \bigcup^{\iota(T)}P_t$. 
%\end{notation}
% \jbnote{Do we ever use this, or does it merely justify the notation above?}
% \wsnote{We did in a previous version, but it appears that we don't anymore, I've commented it out.}

\begin{lemma}
    For any $i,j\in \langle n\rangle$, there is a unique automorphism 
    \[ \begin{tikzcd}
		\varphi\colon &[-3em] \langle n\rangle \arrow[r] & \langle n\rangle 
    \end{tikzcd} \]
    such that $\varphi(i)=j$. 
\end{lemma}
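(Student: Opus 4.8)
The plan is to work with the description of morphisms in $\Lambda$ as $\ZZ$-equivariant monotone maps of universal covers modulo the $\ZZ$-action, or equivalently with the concrete lifts $\widetilde{f}\colon [0,n+1]\to [\widetilde f(0),\widetilde f(0)+n+1]$ sending integer points to integer points, as developed earlier in this section. An automorphism $\varphi\colon \langle n\rangle\to\langle n\rangle$ is a morphism whose underlying map of sets $\underline\varphi$ is a bijection of $\underline n$; since $\underline\varphi\in\Cyc(\underline n,\underline n)$ must be weakly monotone after cutting at suitable unmarked points, a bijection in $\Cyc(\underline n,\underline n)$ is forced to be a cyclic rotation $k\mapsto k+c \pmod{n+1}$ for some $c$. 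So the plan is: first show that the underlying map of an automorphism is a rotation; then show that for a rotation $\underline\varphi$ the fiber data is forced (there is a unique morphism lying over it), giving uniqueness; and finally observe existence by exhibiting the rotation explicitly.

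First I would establish that any automorphism has rotation underlying map. If $\varphi$ is an automorphism with inverse $\psi$, then $\underline\varphi$ and $\underline\psi$ are mutually inverse bijections of the finite set $\underline n$, both lying in $\Cyc(\underline n,\underline n)$. By Lemma \ref{lem:underlying_cyc_maps}, $\underline\varphi$ becomes weakly monotone as a map $\{i+1,\dots,i\}\to\{j+1,\dots,j\}$ for suitable $i,j$; a weakly monotone \emph{bijection} between linearly ordered sets of the same cardinality is the unique order isomorphism, so $\underline\varphi$ sends the cyclically ordered sequence starting at $i+1$ to the cyclically ordered sequence starting at $j+1$ in order, which is exactly the rotation taking $i+1\mapsto j+1$. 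Hence $\underline\varphi(k)=k+c\pmod{n+1}$ where $c=j-i$.

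Next, uniqueness: given that $\underline\varphi$ is a fixed rotation, I would invoke the Proposition classifying the fibers of $U\colon\Hom_\Lambda(\langle n\rangle,\langle m\rangle)\to\Cyc(\underline n,\underline m)$ — for a non-constant $\varphi$ (and a rotation of $\underline n$ with $n\geq 1$ is a bijection, hence non-constant; the case $n=0$ is trivial since $\langle 0\rangle$ has a unique such automorphism, the identity), the fiber $U^{-1}(\underline\varphi)$ is a singleton. Equivalently, via the corollary: a morphism over $\underline\varphi$ is the data of $\underline\varphi$ together with a linear order on each fiber compatible with the cyclic order, but each fiber of a bijection is a singleton, so there is no additional choice. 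Either way, the morphism $\varphi$ with $\underline\varphi$ equal to the rotation $k\mapsto k+c$ is unique. Combined with the previous paragraph, for each target value $j=\varphi(i)$ there is exactly one automorphism, since specifying $\varphi(i)=j$ determines $c=j-i$ and hence $\underline\varphi$, and hence $\varphi$.

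For existence, I would simply exhibit the automorphism: the lift $\widetilde\varphi\colon[0,n+1]\to[c,c+n+1]$ given by $x\mapsto x+c$ (for $c$ with $p_{n+1}(c)=j-i$ suitably interpreted, i.e. $c\equiv j-i\pmod{n+1}$) is a monotone homeomorphism sending integer points to integer points, so descends to a monotone homotopy equivalence of pairs $\langle n\rangle\to\langle n\rangle$; it is invertible with inverse the lift $x\mapsto x-c$, hence an automorphism, and by construction $\varphi(i)=j$. (In the generators-and-relations language of Definition \ref{defn:cyclic_idents}, this is the power $\tau_n^{\,c}$ of the cyclic shift.) I expect the main obstacle to be the first step — pinning down that \emph{every} automorphism, not merely the obvious rotations, has rotation underlying map — which rests on the observation that a weakly monotone bijection of finite linear orders is forced to be the order isomorphism; once that is in hand, the rest is an immediate appeal to the fiber-classification results already proved.
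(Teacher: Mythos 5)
Your proposal is correct and follows essentially the same route as the paper, which disposes of the lemma in one line by noting that an automorphism of $\langle n\rangle$ is precisely a $\mathbb{Z}/(n+1)$-equivariant map, i.e.\ a cyclic rotation, so that the automorphism group acts simply transitively on $\underline{n}$. Your write-up simply supplies the details the paper leaves implicit — deriving the rotation form of $\underline{\varphi}$ from Lemma \ref{lem:underlying_cyc_maps} and the rigidity of weakly monotone bijections, and getting uniqueness from the singleton-fiber classification — and these details are all sound.
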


\begin{proof}
    This result follows directly from the fact that an automorphism in $\Lambda$ is precisely a $\mathbb Z/(n+1)$-equivariant map $\langle n\rangle \to \langle n\rangle$.
\end{proof}

Using the inclusions constructed above, we can now prove one final key property of $\Lambda$. 

\begin{prop} \label{prop:Lambda_CF}
    The composition map 
    \[ \begin{tikzcd}[row sep=0em]
		\Hom_{\Delta}([n],[m])\times \Aut_{\Lambda}(\langle n \rangle) \arrow[r] & \Hom_{\Lambda}(\langle n\rangle,\langle m\rangle)\\
		(f,\psi) \arrow[r,mapsto] & \iota(f)\circ \psi 
    \end{tikzcd} \]
    is a bijection. That is, every morphism in $\Lambda$ factors uniquely into a morphism in $\Delta$ and an automorphism of its source in $\Lambda$.
\end{prop}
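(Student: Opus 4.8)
The plan is to construct an explicit two-sided inverse to the composition map, using the combinatorial description of morphisms in $\Lambda$ in terms of underlying maps of sets together with the orderings on fibers, as developed in the preceding subsection. Given $g \colon \langle n\rangle \to \langle m\rangle$, I first want to extract a canonical automorphism $\psi_g \in \Aut_\Lambda(\langle n\rangle)$ so that $g \circ \psi_g^{-1}$ lies in the image of $\iota$. Recall from Lemma \ref{lem:underlying_cyc_maps} that the underlying map $\underline{g} \in \Cyc(\underline{n},\underline{m})$ becomes weakly monotone after cutting both circles at suitable unmarked points, i.e., after relabelling the source cyclically so that some element $i \in \langle n\rangle$ becomes the new basepoint $0$. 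The key point is that there is a \emph{canonical} such choice: I would use the lift-to-universal-covers picture to single out the unique element $i \in \langle n\rangle$ at which the cut makes $g$ restrict to a genuine morphism of marked intervals of the form produced by $\iota$ (concretely, the element whose image under a chosen lift $\widetilde{g}$ lands in the correct fundamental domain). By the preceding Lemma, there is a unique automorphism $\psi_g \colon \langle n\rangle \to \langle n\rangle$ sending $0$ to $i$; this is the automorphism component. Then $g \circ \psi_g$ is, by construction, a morphism whose underlying set map is weakly monotone on the standard linear representatives, and equipping its fibers with the linear orders induced from the source exhibits it as $\iota(f)$ for a unique $f \in \Hom_\Delta([n],[m])$ via Construction \ref{const:include_Delta_in_Lambda}. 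This defines the candidate inverse $g \mapsto (f,\psi_g)$.

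Next I would check the two composites are identities. That the map $(f,\psi) \mapsto \iota(f)\circ\psi$ followed by the above construction returns $(f,\psi)$ amounts to observing that $\iota(f)$ already has its ``cut point'' at $0$ (since morphisms in the image of $\iota$ are precisely the degree-one monotone maps fixing the relevant fundamental domain), so the canonical cut point of $\iota(f)\circ\psi$ is exactly $\psi^{-1}(0)$, forcing the recovered automorphism to be $\psi$; cancelling it recovers $\iota(f)$, hence $f$ by faithfulness of $\iota$. For the other direction, starting from $g$, the morphism $\iota(f)\circ\psi_g$ has the same underlying set map as $g$ (both equal $\underline{g}$, since $\psi_g$ permutes within fibers appropriately — or rather, $\underline{\iota(f)} \circ \underline{\psi_g} = \underline{g}$ by the way $i$ and the fiber orders were chosen) and the same fiber orders, so by the Corollary at the end of Section \ref{finitesubsetLambda} — a morphism in $\Lambda$ is determined by its underlying set map plus compatible fiber orders — we get $\iota(f)\circ\psi_g = g$.

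An alternative, cleaner route avoids picking lifts: use the counting Proposition just before the Corollary in Section \ref{finitesubsetLambda}. The set $\Hom_\Lambda(\langle n\rangle,\langle m\rangle)$ fibers over $\Cyc(\underline{n},\underline{m})$ with fibers of size $n+1$ over constant maps and size $1$ otherwise; meanwhile $\Hom_\Delta([n],[m]) \times \Aut_\Lambda(\langle n\rangle)$ fibers over $\Cyc(\underline{n},\underline{m})$ as well, since $|\Aut_\Lambda(\langle n\rangle)| = n+1$ and $\iota$ identifies $\Hom_\Delta([n],[m])$ with the set of morphisms in $\Lambda$ whose ``standard representative'' cut is trivial — one checks this hits each $\Cyc$-fiber exactly once in the non-constant case and accounts for the linear-order data in the constant case. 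Combined with finiteness, a bijection on each fiber plus compatibility of the base maps gives the global bijection; but one still must verify the composition map is injective (equivalently surjective) fiberwise, which is essentially the content of the explicit argument above. I expect the main obstacle to be precisely this: pinning down the \emph{canonical} automorphism $\psi_g$ — i.e., proving that the cut point $i \in \langle n\rangle$ making $g$ land in $\iota(\Delta)$ exists and is unique — since the ``unmarked point at which we cut'' in Lemma \ref{lem:underlying_cyc_maps} is a priori not unique, and one needs the degree-one/monotone structure to force uniqueness. Everything else (both composites being identities, faithfulness of $\iota$) is then bookkeeping with the underlying-set-plus-fiber-order description.
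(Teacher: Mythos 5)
Your construction is essentially the paper's own argument: the paper likewise fixes the standard linear order on $\langle m\rangle$ and extracts the automorphism from the linear order induced on $\langle n\rangle$ by the fiber orders of $\varphi$, which is exactly your canonical cut point, and then identifies the remaining factor as $\iota(f)$ via Construction \ref{const:include_Delta_in_Lambda}. The only blemish is a bookkeeping slip between $\psi_g$ and $\psi_g^{-1}$ (you want $g\circ\psi_g^{-1}\in\iota(\Delta)$ throughout, consistent with your opening sentence), which does not affect the bijectivity claim since $\Aut_{\Lambda}(\langle n\rangle)$ is a group.
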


% \begin{proof}
% 	Given a morphism 
% 	\[ \begin{tikzcd}
% 		\varphi \colon &[-3em] \langle n\rangle \arrow[r] & \langle m\rangle 
% 	\end{tikzcd} \]
% 	we consider the canonical linear order on $\langle m\rangle =\{0,1,\ldots, m\}$. Together with the linear orders on the fibers of $\varphi$, it induces a lexicographic linear order on $\langle n\rangle$, i.e., a bijection (of underlying sets of)
% 	\[ \begin{tikzcd}
% 		\psi \colon {[n]} \arrow[r] & {\langle n\rangle}. 
% 	\end{tikzcd} \]
% 	such that the composite map (of underlying sets)
% 	\[ \begin{tikzcd}
% 		{[n]} \arrow[r,"\psi"] & {\langle n\rangle}\arrow[r,"\varphi"] & {\langle m\rangle}
% 	\end{tikzcd} \]
% 	is linear order-preserving. Note that this map is the unique bijection $\psi$ such that the composite is linear order-preserving, and such that the induced order on the fibers agrees with that given by $\varphi$. It thus suffices for us to show that $\psi$ induces an automorphism 
% 	\[ \begin{tikzcd}
% 		\iota([n]) \arrow[r,"\cong"] &  \langle n\rangle 
% 	\end{tikzcd} \]
% 	in $\Lambda$. Since automorphisms in $\Lambda$ are simply equivariant maps, it is sufficient to check that, given a linearly ordered finite set $T$ and linearly ordered finite sets $\{P_t\}_{t\in T}$, the cyclic closure of the lexicographic linear order on $\bigcup_{t\in T}P_t$ coincides with the lexicographic cyclic order on the same. We leave this check as an exercise for the reader.  
% \end{proof}

%\wsnote{I recommend replacing the proof above with the following.} \jbnote{done}

\begin{proof}
     We begin by fixing a linear order on the underlying set of $\langle m\rangle$.  The proposition may then be proved by examining the linear order induced on the underlying set of $\langle n\rangle$ by the linear orders on fibers encoded by a morphism $\varphi \colon \langle n\rangle \to \langle m\rangle$.
\end{proof}

\begin{definition}
    The unique factorization of a morphism $\varphi$ in $\Lambda$ into $f_\varphi \circ \psi_\varphi$, where $f_\varphi$ is a morphism in $\Delta$ and $\psi_\varphi$ is an automorphism in $\Lambda$, is called the \emph{canonical factorization}.
\end{definition}

\begin{remark}
    While the topological approach does not simplify the proof of the canonical factorization, it does provide good intuition for it. If we consider the morphism $\varphi \colon \langle 1\rangle \to \langle 2 \rangle$ from Example \ref{eg:include}, we can draw the canonical factorization as the gluing of two cylinders as in Figure \ref{fig:gluing_mapping_cylinders}. 
    \begin{figure}[h!]
	\includegraphics[width=0.7\textwidth]{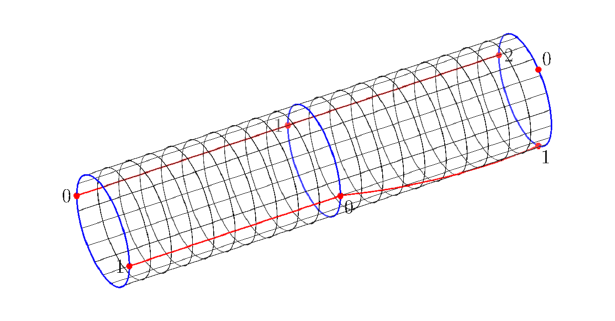}
		\caption{The gluing of two mapping cylinders.} \label{fig:gluing_mapping_cylinders}
    \end{figure} 
    Notice that the labeling of the points on the central circle by $0$ and $1$ is the only way to label them so that the right-hand morphism is the image of a map in $\Delta$.
\end{remark}

\section{Crossed simplicial groups and realizations} \label{csg}

Now that we have set up several different approaches to thinking about the category $\Lambda$ and its properties, we turn our attention to an axiomatization of some of these features, specifically the relationship with the simplex category $\Delta$ and the canonical factorization.

The more general notion we consider is that of \emph{crossed simplicial groups}, introduced by Krasauskas \cite{kras} under the name \emph{skew-simplicial groups}, and independently by Fiedorowicz and Loday \cite{fl}.  While we follow the terminology from the latter, our notation instead more closely follows that of \cite{dkcsg}.

\begin{definition}
    A \emph{crossed simplicial group} is a category $\Delta \mathfrak{G}$ satisfying the following conditions.
    \begin{enumerate}
		\item The simplex category $\Delta$ is a subcategory of $\Delta\mathfrak{G}$ that contains all the objects of $\Delta\mathfrak{G}$.
		
		\item Every morphism $\varphi \colon [n]\to [m]$ in $\Delta\mathfrak{G}$ factors uniquely as $\psi\circ g$, where $\psi$ is a morphism in $\Delta$ and $g\in \Aut_{\Delta \mathfrak{G}}([n])$. We call it the \emph{canonical factorization} of $\varphi$. 
    \end{enumerate}
    We denote the automorphism groups $\Aut_{\Delta\mathfrak{G}}([n])$ of $\Delta\mathfrak{G}$ by $\mathfrak{G}_n$. 
\end{definition}

\begin{remark}
    Notice that this definition is suppressing a subtlety: in most cases we technically consider categories $\Delta\mathfrak{G}$ in which the subcategory from condition (1) is isomorphic, but not equal, to $\Delta$. In practice, this distinction does not alter the way in which we work with crossed simplicial groups.
\end{remark}

\begin{remark}\label{rmk:var_defns_csg}
    There are two ways of defining crossed simplicial groups in the literature, either as categories $\Delta\mathfrak{G}$ as in \cite[Definition 1.1]{fl}, or as simplicial sets $\mathfrak{G}_\ast$ as in Construction \ref{const:sset_of_csg} below, following \cite[Definition 1.3]{kras}). These approaches are equivalent, as shown in \cite[Theorem 1.4]{kras}. In this paper, we have chosen to define crossed simplicial groups as categories, since our main objects of study are functors out of the category $\Lambda$. 
\end{remark}

\begin{example}
    The simplest example of a crossed simplicial group is the simplex category $\Delta$ itself.  In this case, the automorphism groups of the objects are trivial, so the canonical factorization does not give any new information.  
\end{example}

\begin{example}
    There is a category $\Delta\mathfrak{S}$, called the \emph{symmetric crossed simplicial group}, whose objects are the sets $[n]$ for $n\geq 0$, and whose morphisms are maps of sets  $f \colon [m]\to [n]$ together with a choice of linear orders on the fibers $f^{-1}(i)$ for $i\in [n]$. The corresponding automorphism groups are precisely the symmetric groups. This crossed simplicial group first appears, independently, in  \cite[Prop. 1.5]{kras} and \cite[Prop. 3.4]{fl}. The explicit description given here is from \cite[\S I.2]{dkcsg}.
\end{example}

\begin{example}
    The \emph{paracyclic category} $\Lambda_\infty$ is a category whose objects are  $\langle n\rangle$ for $n\geq 0$. A morphism $f \colon \langle n\rangle \to \langle m\rangle$ in $\Lambda$ is a non-decreasing map of sets $f \colon \ZZ\to \ZZ$ such that 
    \[ f(i+n+1)=f(i)+m+1 \]
    for all $i\in \ZZ$. The paracyclic category is a crossed simplicial group, all of whose automorphism groups are isomorphic to $\ZZ$. This crossed simplicial group first appears in \cite[Example 3]{fl}. The term \emph{paracyclic}, as well as the explict description given here, comes from \cite{gj}.
\end{example}

\begin{construction} \label{const:sset_of_csg}
    Let $\Delta\mathfrak{G}$ be a crossed simplicial group. Given a morphism $\varphi \colon [n]\to [m]$ in $\Delta$, we can define a map of sets $\varphi^\ast \colon \mathfrak{G}_m\to \mathfrak{G}_n$ as follows.  Given $g\in \mathfrak{G}_m$, the canonical factorization allows us to complete the diagram 
    \[ \begin{tikzcd}
		& {[m]}\arrow[d,"\varphi"]\\
		{[n]}\arrow[r,"g"'] & {[n]}
    \end{tikzcd} \]
    uniquely to a commutative diagram 
    \[ \begin{tikzcd}
		{[m]}\arrow[r,"{\varphi^\ast(g)}"]\arrow[d,"\psi"']& {[m]}\arrow[d,"\varphi"]\\
		{[n]}\arrow[r,"g"'] & {[n],}
    \end{tikzcd} \]
    where $\psi$ is a morphism in $\Delta$. The uniqueness of the canonical factorization implies that the maps $\varphi^\ast$ assemble into a simplicial set 
    \[ \begin{tikzcd}[row sep=0em]
		\mathfrak{G}_\ast \colon &[-3em] \Deltaop \arrow[r] & \Set \\
		& {[n]}\arrow[r,mapsto] & \mathfrak{G}_n.
    \end{tikzcd} \]
\end{construction}

\begin{remark}
    The simplical set $\mathfrak{G}_\ast$, together with the group structures on $\mathfrak{G}_n$ and a few algebraic identities, suffices to determine the category $\Delta\mathfrak{G}$ completely. While this result is not important to our exposition here, proofs of it can be found in \cite[Prop. 1.6]{fl} and \cite[Theorem 1.4]{kras}.
\end{remark}

\begin{remark} 
    The notation for a crossed simplicial group originally used in \cite{fl} was $\Delta G$, which is also the notation used in \cite[Ch.\ 6]{loday}. In the latter, the cyclic category $\Lambda$ is denoted by $\Delta C$. We have chosen to instead use the notation $\Delta \mathfrak{G}$ from \cite{dkcsg}, since it helps distinguish the simplicial sets associated to crossed simplicial groups from other notation in use here.
\end{remark}

\begin{notation} \label{notation:Lambda_star}
    Because we use the conventional notation $\Lambda$ for the cyclic category, rather than the crossed simplicial group notation $\Delta \mathfrak G$, we must fix a notation for the simplicial set associated to $\Lambda$. To adhere as closely to our convention as possible, we write $\Lambda_\ast$ for this simplicial set. We then have 
	\begin{align*}
		\Lambda_0 &= \{\id_{[0]}\}\\
		\Lambda_1 &= \{\id_{[1]},\tau_1\}\\
		\Lambda_2 &= \{\id_{[2]},\tau_2,\tau_2^2\}\\
		\vdots & \qquad \vdots\text{.}
	\end{align*} 
    As we show in Lemma \ref{lem:degnsimp} below, the only non-degenerate simplices of $\Lambda_\ast$ are $\id_{[0]}$ and $\tau_1$. 
\end{notation}

There is a profound relationship between the cyclic category, cyclic objects, and circle actions, which was an initial motivation for the definition of the cyclic category by Connes \cite{connes}. This relationship extends to other crossed simplicial groups as well, and may be summarized in the following theorem.  The first and third statements are proved in \cite[Thm 5.3]{fl}, and the second statement follows from \cite[Thm 5.12]{fl}. 

\begin{theorem}
    Let $\Delta\mathfrak{G}$ be a crossed simplicial group, and let $\mathfrak{G}_\ast$ be the associated simplicial set. 
    \begin{enumerate}
		\item The realization $|\mathfrak{G}_\ast|$ is canonically equipped with the structure of a topological group. 
		
		\item There is a homotopy equivalence $B|\mathfrak{G}_\ast|\simeq |N(\Delta \mathfrak{G})|$, where $B|\mathfrak{G}_\ast|$ denotes the classifying space of the topological group $|\mathfrak{G}_\ast|$. 
		
		\item Given a functor $X \colon \Delta\mathfrak{G}^{\op} \to \Set$, the realization $|X|$ of the underlying simplicial object comes equipped with a continuous action of $|\mathfrak{G}_\ast|$, providing a functor 
		\[ \begin{tikzcd}		  \Set_{\Delta\mathfrak{G}}\arrow[r] & {|\mathfrak{G}_\ast| \text{-}\Top}
		\end{tikzcd} \] 
		to spaces equipped with a continuous $|\mathfrak{G}_\ast|$-action. 
	\end{enumerate}
\end{theorem}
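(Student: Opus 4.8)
The plan is to prove the three parts essentially in order, since each builds on the previous one, and to reduce everything to the well-understood simplicial realization functor $|{-}| \colon \SSets \to \Top$ together with its monoidal properties. First I would recall that geometric realization of simplicial sets preserves finite products (this is the classical fact that $|X \times Y| \cong |X| \times |Y|$, using that $|{-}|$ is a left adjoint and a direct check on representables, or the fact that $|{-}|$ lands in compactly generated spaces). The multiplication maps $\mathfrak{G}_n \times \mathfrak{G}_n \to \mathfrak{G}_n$ and the inversion maps $\mathfrak{G}_n \to \mathfrak{G}_n$ are \emph{not} simplicial maps on the nose, so the key first step is to observe that $\mathfrak{G}_\ast \times \mathfrak{G}_\ast$ does carry a simplicial structure making $\mathfrak{G}_\ast \times \mathfrak{G}_\ast \to \mathfrak{G}_\ast$ and $\mathfrak{G}_\ast \to \mathfrak{G}_\ast$ simplicial — this is precisely Construction \ref{const:sset_of_csg} applied with a twist, using the canonical factorization to define the face and degeneracy maps on the product so that the group operations commute with them. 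Granting that, applying $|{-}|$ and using preservation of products gives continuous maps $|\mathfrak{G}_\ast| \times |\mathfrak{G}_\ast| \to |\mathfrak{G}_\ast|$ and $|\mathfrak{G}_\ast| \to |\mathfrak{G}_\ast|$, and the group axioms transfer because they hold levelwise and $|{-}|$ is a functor; the unit is $|\{e\}| = \ast \to |\mathfrak{G}_\ast|$ picking out $|\mathrm{id}_{[0]}|$. This proves (1).

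For (2), the cleanest route is to exhibit $N(\Delta\mathfrak{G})$ — more precisely its nerve as a simplicial set, or rather the bisimplicial object obtained from the action groupoid picture — as the bar construction $B_\bullet|\mathfrak{G}_\ast|$ up to levelwise weak equivalence, and then invoke that realization of the bar construction computes the classifying space. Concretely, I would use the crossed-simplicial-group structure to identify $\Delta\mathfrak{G}$ with the Grothendieck construction (or total category) of the simplicial set $\mathfrak{G}_\ast$ regarded via the action of $\Delta$ on the groups, so that $N(\Delta\mathfrak{G})$ is the diagonal of a bisimplicial set whose rows are nerves of $\Delta$ twisted by $\mathfrak{G}$; taking realization in one direction first yields $\Delta$'s contractible nerve smashed appropriately, leaving $B|\mathfrak{G}_\ast|$. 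This is where I would lean directly on \cite[Thm 5.12]{fl} as the excerpt suggests, so the job is mostly to set up the bisimplicial comparison carefully and cite the homotopy-equivalence input. For (3), given $X \colon \Delta\mathfrak{G}^{\op} \to \Set$, I would note that for each $[n]$ the group $\mathfrak{G}_n$ acts on the set $X_n$ (by restricting $X$ along $\Aut_{\Delta\mathfrak{G}}([n]) \hookrightarrow \Delta\mathfrak{G}$), that these actions are compatible with the simplicial structure on the underlying simplicial set of $X$ in the twisted sense dictated by the $\varphi^\ast$ maps, and hence assemble into a map of simplicial sets $\mathfrak{G}_\ast \times X_\ast \to X_\ast$ (again using the twisted simplicial structure on the product); realizing and using product-preservation gives the continuous action $|\mathfrak{G}_\ast| \times |X| \to |X|$, and functoriality in $X$ gives the claimed functor $\Set_{\Delta\mathfrak{G}} \to |\mathfrak{G}_\ast|\text{-}\Top$.

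The main obstacle I anticipate is bookkeeping around the \emph{twisted} simplicial structures: the naive product simplicial set $(\mathfrak{G}_\ast \times \mathfrak{G}_\ast)$ with diagonal faces does not make multiplication simplicial, and one must instead use the faces coming from Construction \ref{const:sset_of_csg} — effectively a semidirect-product-style formula $d_i(g,h) = (d_i g,\ (d_i \text{ acting through } g)^\ast h)$ — and then verify the simplicial identities for this twisted structure and that $|{-}|$ of it still agrees with $|\mathfrak{G}_\ast| \times |\mathfrak{G}_\ast|$. Once the twisted products are correctly in place, parts (1) and (3) are formal, and the only genuinely nontrivial geometric input is the classifying space identification in (2), which I would import wholesale from \cite{fl} rather than reprove. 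I would also remark that all spaces should be taken in a convenient category (compactly generated weak Hausdorff) so that products behave and $|{-}|$ is monoidal, to avoid point-set pathologies.
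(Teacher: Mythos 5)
The paper offers no proof of this theorem: it is quoted from Fiedorowicz--Loday, with parts (1) and (3) cited to \cite[Thm 5.3]{fl} and part (2) to \cite[Thm 5.12]{fl}. So the only question is whether your sketch would go through, and for parts (1) and (3) there is a genuine gap at precisely the step you flag and then propose to dispose of by ``preservation of products.'' You are right that $\mu\colon\mathfrak{G}_n\times\mathfrak{G}_n\to\mathfrak{G}_n$ is not simplicial for the cartesian product structure, and right that a twisted structure makes it simplicial: writing $h\varphi=\psi_h\circ\varphi^\ast(h)$ for the canonical factorization, one has $\varphi^\ast(gh)=\psi_h^\ast(g)\cdot\varphi^\ast(h)$, so $\varphi^\ast(g,h):=(\psi_h^\ast(g),\varphi^\ast(h))$ works (your formula twists the other factor, which is a matter of conventions). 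The problem is the next step. Realization preserves products only for the honest cartesian product, whose defining feature is that \emph{both} projections are simplicial; for any genuinely twisted structure one projection fails to be simplicial, so $|\mathfrak{G}_\ast\,\widetilde{\times}\,\mathfrak{G}_\ast|\cong|\mathfrak{G}_\ast|\times|\mathfrak{G}_\ast|$ is not available by that route. Worse, the twisted product \emph{is} isomorphic to the untwisted one as a simplicial set, via the shear $(g,h)\mapsto(gh,h)$, and under that identification $\mu$ becomes the first projection---so if you trivialize the twist in the obvious way, the map you realize is $\mathrm{pr}_1$, not the group law.

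The missing ingredient---which is the actual content of the Fiedorowicz--Loday proof, and which surfaces in this paper in the proof of Proposition \ref{prop:cyc_reps_circle_bundles} (``the automorphisms of $\langle k\rangle$ act on $|\Delta[k]|$ by permuting the vertices'')---is the action of $\mathfrak{G}_n$ on the topological simplex $\Delta^n$ by the affine extensions $g_\ast$ of the underlying vertex permutations. One defines the multiplication and the action of part (3) directly on representatives, roughly $[g,s]\cdot[x,s]=[g\cdot x,\,g_\ast(s)]$ after reducing to a common simplicial degree via the Eilenberg--Zilber decomposition of $|\mathfrak{G}_\ast\times X|$, and the crossed simplicial group identities are exactly what is needed to check compatibility with faces and degeneracies; without the $g_\ast$ correction the formula is not well defined. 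So parts (1) and (3) are not ``formal'' once a twisted product is written down---the geometric correction on the topological simplices is where the work lives. Your outline for part (2) (bar construction, bisimplicial comparison, imported from \cite{fl}) is reasonable and is consistent with what the paper does, namely cite it.
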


In the case of the cyclic category $\Lambda$, the corresponding topological group is $S^1\cong U(1)\cong SO(2)$, a fact we want to demonstrate on the level of topological spaces.  First, we prove the following lemma that we mentioned above.

\begin{lemma} \label{lem:degnsimp}
    For $n\geq 2$, every $n$-simplex of $\Lambda_\ast$ is degenerate.
\end{lemma}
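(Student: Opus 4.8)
The plan is to work directly with the simplicial set $\Lambda_\ast$ of Notation \ref{notation:Lambda_star}, whose set of $n$-simplices is the automorphism group $\mathfrak{G}_n = \Aut_\Lambda(\langle n\rangle)$, which we know is cyclic of order $n+1$, generated by $\tau_n$. A simplex is degenerate precisely when it lies in the image of some degeneracy map $s_i^\ast \colon \Lambda_{n-1} \to \Lambda_n$ (the structure maps of the simplicial set coming from Construction \ref{const:sset_of_csg}). So the task reduces to showing that for $n \geq 2$, the images of the maps $s_0^\ast, \ldots, s_{n-1}^\ast$ together cover all of $\Lambda_n = \{\id, \tau_n, \ldots, \tau_n^n\}$.

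The key computation is to identify $s_i^\ast$ explicitly using the defining property from Construction \ref{const:sset_of_csg}: given $g \in \mathfrak{G}_n$, the element $s_i^\ast(g) \in \mathfrak{G}_{n-1}$ is the unique automorphism making the square with $s_i \colon [n] \to [n-1]$ on the vertical sides commute up to a morphism of $\Delta$ on top. Equivalently — and this is cleaner — I would instead compute the \emph{codegeneracies acting the other way}, i.e.\ describe which $\tau_n^k$ arise from $\tau_{n-1}^\ell$. Concretely, one uses the cyclic identities from Definition \ref{defn:cyclic_idents}, in particular $\tau_n \circ s_i = s_{i-1}\circ \tau_{n+1}$ for $1 \leq i \leq n$ and $\tau_n\circ s_0 = s_n\circ \tau_{n+1}^2$, to push a power of $\tau$ past a degeneracy. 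Applying these relations repeatedly to $s_i^\ast$ shows that $s_i^\ast(\tau_n^k)$ depends on $i$ and $k$ in a controlled way, and that as $i$ ranges over $\{0, \ldots, n-1\}$ (with the degenerate simplex $s_i^\ast(\id_{[n-1]})$ already giving certain powers and then hitting $\tau_{n-1}$ gives more), the images sweep out more than one power of $\tau_n$. The crucial count: each $s_i^\ast$ is a map of sets from an $n$-element set $\Lambda_{n-1}$ into the $(n+1)$-element set $\Lambda_n$, and it suffices that the union of the $n$ images $s_i^\ast(\Lambda_{n-1})$, together with the constraint that degeneracy maps of a simplicial set are jointly "almost surjective" in this cyclic setting, exhausts $\Lambda_n$. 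A slick alternative: since $|\Lambda_\ast| \simeq S^1$ is known (it is the classifying-space statement), and $S^1$ has a CW structure with one $0$-cell and one $1$-cell, the minimal simplicial model forces all simplices above dimension $1$ to be degenerate — but the paper presumably wants the combinatorial argument, as this lemma is used to \emph{prove} the $S^1$ identification.

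I would therefore carry out the argument in the following order: (1) recall that $\Lambda_n \cong \ZZ/(n+1)$ with generator $\tau_n$, and that degeneracy in a simplicial set means membership in $\bigcup_i \operatorname{im}(s_i^\ast)$; (2) use Construction \ref{const:sset_of_csg} together with the cyclic and simplicial identities to compute $s_i^\ast(\tau_n^k)$ for all $i, k$, showing in particular that $s_{n-1}^\ast$ and $s_0^\ast$ (or a suitable pair) already have images whose union is all of $\Lambda_n$ when $n \geq 2$; concretely one checks $s_i^\ast(\id) = \id$ for all $i$ while the images of $\tau_n, \tau_n^2, \ldots$ under the various $s_i^\ast$ cover the rest; (3) conclude that every element of $\Lambda_n$ is in the image of some degeneracy, hence degenerate.

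\textbf{Main obstacle.} The real work — and the step most prone to sign/index errors — is step (2): carefully extracting the formula for $s_i^\ast$ from the uniqueness-of-canonical-factorization characterization and then running the cyclic identities of Definition \ref{defn:cyclic_idents} to pin down exactly which power $\tau_{n-1}^\ell$ maps to which $\tau_n^k$. One must be careful that $s_i^\ast$ is defined via a commuting square with a \emph{morphism of $\Delta$} filling the top, so $s_i^\ast(g)$ is not literally "the same permutation," and tracking how the index $i$ shifts under conjugation by $\tau$ requires the case split at $i = 0$. Once that bookkeeping is done, the surjectivity count for $n \geq 2$ is immediate (for $n = 1$ it genuinely fails, since $\tau_1$ is non-degenerate, so the bound $n \geq 2$ is sharp and the argument must visibly use it).
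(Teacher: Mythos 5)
Your plan is essentially the paper's proof: the ``short computation'' the paper invokes is exactly the bookkeeping you defer to step (2), namely that the cyclic identities give $s_n^\ast(\tau_n^k)=\tau_{n+1}^k$ for $0\leq k\leq n$ together with $s_{n-1}^\ast(\tau_n^{n})=\tau_{n+1}^{n+1}$, so that two degeneracies already jointly cover all of $\Lambda_{n+1}$. Your remarks that the argument must visibly fail at $n=1$ (where $\tau_1$ is non-degenerate) and that the finish is not a pure counting argument but requires the explicit images are both on target, and the pair you propose ($s_{n-1}^\ast$ and $s_0^\ast$ in your indexing) does work out when the computation is carried through.
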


\begin{proof}
    Let $s_i \colon [n+1]\to [n]$ be the $i$\textsuperscript{th} degeneracy map in $\Delta$. A short computation shows that $s_n^\ast(\tau_n^k)=\tau_{n+1}^k$ for $0\leq k\leq n$. Moreover, $s_{n-1}^\ast(\tau_n^{n})=\tau_{n+1}^{n+1}$, completing the proof. 
\end{proof}

\begin{prop}\label{prop:realization_Lambda_star}
    There is a homeomorphism 
    \[ |\Lambda_\ast|\cong S^1. \]
\end{prop}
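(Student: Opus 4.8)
The plan is to compute the realization of $\Lambda_\ast$ directly from its simplicial structure, using the description of the non-degenerate simplices furnished by Lemma \ref{lem:degnsimp}. By that lemma (together with Notation \ref{notation:Lambda_star}), the only non-degenerate simplices of $\Lambda_\ast$ are the vertex $\id_{[0]}$ in degree $0$ and the $1$-simplex $\tau_1$ in degree $1$. So as a simplicial set, $\Lambda_\ast$ has a single vertex and a single non-degenerate edge, and a standard fact about geometric realization is that $|\Lambda_\ast|$ is then the quotient of the topological $1$-simplex $\Delta^1$ obtained by gluing its two endpoints — that is, a circle. I would first verify that the two faces $d_0\tau_1$ and $d_1\tau_1$ are both equal to the unique $0$-simplex $\id_{[0]}$ (which is automatic since $\Lambda_0$ is a singleton), so that the attaching maps of the single non-degenerate $1$-cell indeed identify both endpoints of $\Delta^1$ to the single $0$-cell.

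Concretely, I would present $|\Lambda_\ast|$ via the coend/CW description: $|\Lambda_\ast| = \left(\coprod_n (\Lambda_\ast)_n \times \Delta^n\right)/\!\sim$, and observe that by Lemma \ref{lem:degnsimp} this coequalizer is computed by the non-degenerate simplices alone, giving a CW complex with one $0$-cell and one $1$-cell whose attaching map $S^0 = \partial\Delta^1 \to \{\text{pt}\}$ is constant. Such a CW complex is $S^1$. This gives the homeomorphism $|\Lambda_\ast| \cong S^1$ cleanly. An alternative, if one prefers to avoid invoking the CW-structure theorem for realizations, is to exhibit the map $\Delta^1 \to |\Lambda_\ast|$ classifying $\tau_1 \in (\Lambda_\ast)_1$, show it is surjective (again by Lemma \ref{lem:degnsimp}, every point of the realization is represented by a point of the $1$-simplex $\tau_1$, since every simplex is degenerate above dimension $1$), and identify its fibers: it is injective on the interior of $\Delta^1$ and collapses $\{0,1\}$ to a point, hence factors through a continuous bijection $\Delta^1/\partial\Delta^1 \to |\Lambda_\ast|$ from a compact space to a Hausdorff space, which is therefore a homeomorphism.

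The main obstacle is not conceptual but bookkeeping: one must be careful that the simplicial identifications in the realization do not secretly glue the $1$-cell to itself in some further way, e.g. via degeneracies interacting with the faces of $\tau_1$. This is exactly what Lemma \ref{lem:degnsimp} rules out — it guarantees there are no non-degenerate cells in dimension $\geq 2$ to introduce extra relations — so the realization really is just the pushout of $\mathrm{pt} \leftarrow \partial\Delta^1 \hookrightarrow \Delta^1$. I would also remark, for the sake of the subsequent discussion, that under the identification $S^1 \cong U(1)$ the group structure on $|\Lambda_\ast|$ coming from the general crossed-simplicial-group theory agrees with the standard one, though for the bare homeomorphism claimed in Proposition \ref{prop:realization_Lambda_star} this is not needed.
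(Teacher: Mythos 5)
Your argument is correct and is essentially the paper's own proof, which likewise expresses $|\Lambda_\ast|$ in terms of its non-degenerate simplices (via Lemma \ref{lem:degnsimp}) and observes that gluing both endpoints of the single non-degenerate $1$-simplex to the single $0$-simplex yields $S^1$. Your version simply spells out the bookkeeping more explicitly.
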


\begin{proof}
    Expressing $|\Lambda_\ast|$ in terms of non-degenerate simplices, we see that it is obtained by gluing both endpoints of a single 1-simplex to a single 0-simplex.
\end{proof}

We do not construct the group structure on $|\Lambda_\ast|$ here, instead directing the interested reader to \cite[\S 5]{fl} or \cite[Ch.\ 7]{loday}.

\section{Cyclic sets and their model structure} \label{cyclicsetmc}

%\wsnoteil{Below is my proposed rewrite of the section on the homotopy theory of cyclic sets.}

We now turn to considering the categories $\SSets$ of simplicial sets and $c\Sets$ of cyclic sets.  The category $\SSets$ of simplicial sets has as objects the functors $\Deltaop \rightarrow \Sets$ and as morphisms the natural transformations between them.  This category has a model structure in which the weak equivalences are the weak homotopy equivalences, or maps that induce isomorphisms on all homotopy groups, the fibrations are the Serre fibrations, and the cofibrations are the retracts of cell inclusions \cite{quillen}.  In what follows, we simply denote this model structure by $\SSets$.

Similarly, a cyclic set is given by a functor $X \colon \Lambdaop \rightarrow \Sets$ and is thus a simplicial set together with bijections $t_n \colon X_n\to X_n$ for every $n$ satisfying the duals of the identities given in Definition \ref{defn:cyclic_idents}.  The category $c\Sets$ of cyclic sets has a model structure that was originally established by Dwyer, Hopkins, and Kan \cite{dhk}.  In this section, we give a full exposition of this model structure.

We first observe that the categories $\SSets$ and $c\Sets$ are connected by the forgetful functor $j^* \colon c\Sets \rightarrow \SSets$ induced by the inclusion functor $j \colon \Delta \rightarrow \Lambda$.  As a first step, let us consider the application of the functor $j^*$ to representables.

In the category of simplicial sets, for for every integer $n\geq 0$, there is a representable simplicial set $\Delta[n]$ given by
\[ \Hom_\Delta(-,[n]) \colon \Deltaop \rightarrow \Sets. \]
Similarly, for every integer $n\geq 0$, there is a representable cyclic set $\Lambda[n]$, given by 
\[ \Hom_{\Lambda}(-, \langle n \rangle) \colon \Lambda^{\op} \rightarrow \Sets. \]
By the Yoneda Lemma, these cyclic sets have the universal property that the elements $x\in X_n$ correspond uniquely to maps $c_x \colon \Lambda[n] \to X$ such that $c_x(\id_{\langle n\rangle})=x$. This universal property can be encoded as a natural isomorphism 
\[ \Hom_{c\Set}(\Lambda[n],X)\cong X_n. \]
Since the simplicial representables $\Delta[n]$ have an analogous universal property in $\SSets$, there is a composite isomorphism 
\[  \Hom_{c\Sets}(\Lambda[n], X)\cong X_n \cong \Hom_{\SSets}(\Delta[n], j^*X). \] 

% We consider the restriction functor 
% \[ \begin{tikzcd}
%     j^\ast \colon &[-3em] c\Set\arrow[r] & \SSets 
% \end{tikzcd} \]
% sending each cyclic set to its underlying simplicial set. 
Because the functor $j^*$ preserves colimits, composing it with geometric realization provides us with a colimit-preserving functor $|j^\ast(-)|$ producing a topological space from a cyclic set.  The key to understanding the realization of cyclic sets lies in the realization of the representables $|\Lambda[n]|$. 

\begin{prop}\label{prop:cyc_reps_circle_bundles} 
    The simplicial object
    \[ \begin{tikzcd}
		\Delta^{\op}\arrow[r,"{j^\ast \Lambda[-]}"] & \SSets \arrow[r,"{| - |}"] & \Top 
	\end{tikzcd} \]
    is naturally isomorphic to $S^1\times |\Delta[-]|$.
\end{prop}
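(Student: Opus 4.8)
The plan is to compute the left-hand functor objectwise and then check naturality. Fix $n \geq 0$. By definition, $j^\ast \Lambda[n]$ is the simplicial set $[k] \mapsto \Hom_\Lambda(\langle k \rangle, \langle n \rangle)$. The canonical factorization (Proposition \ref{prop:Lambda_CF}) gives a bijection
\[ \Hom_\Lambda(\langle k\rangle, \langle n\rangle) \cong \Hom_\Delta([k],[n]) \times \Aut_\Lambda(\langle k\rangle), \]
and since $\Aut_\Lambda(\langle k\rangle) \cong \mathbb Z/(k+1)$ is exactly $\Lambda_k$ in the notation of \ref{notation:Lambda_star}, this identifies $j^\ast\Lambda[n]$, as a set in each degree, with $(\Delta[n])_k \times \Lambda_k$. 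The first step is therefore to verify that this objectwise bijection is a map of simplicial sets, i.e.\ that the simplicial structure maps of $j^\ast\Lambda[n]$ correspond under canonical factorization to the diagonal simplicial structure on $\Delta[n] \times \Lambda_\ast$. This is precisely the content of Construction \ref{const:sset_of_csg}: for a morphism $\varphi\colon [k']\to[k]$ in $\Delta$, precomposition on $\Hom_\Lambda(\langle k\rangle,\langle n\rangle)$ sends $(\psi, g)$ (meaning $\iota(\psi)\circ g$) to the canonical factorization of $\iota(\psi)\circ g\circ\iota(\varphi)$, which by the defining diagram of $\varphi^\ast$ in that construction is $(\psi\circ\overline\varphi,\ \varphi^\ast(g))$ for the appropriate $\Delta$-morphism $\overline\varphi$ — i.e.\ the $\Delta[n]$-coordinate transforms by the simplicial structure of $\Delta[n]$ and the $\Lambda_\ast$-coordinate by the simplicial structure of $\Lambda_\ast$. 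So as simplicial sets, $j^\ast\Lambda[n] \cong \Delta[n] \times \Lambda_\ast$.

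Next I would apply geometric realization. Since $|-|\colon \SSets \to \Top$ preserves finite products (it is a left adjoint, and products of simplicial sets are computed well enough that realization commutes with them — this is the standard Milnor theorem), we get
\[ |j^\ast\Lambda[n]| \cong |\Delta[n]| \times |\Lambda_\ast|. \]
By Proposition \ref{prop:realization_Lambda_star}, $|\Lambda_\ast| \cong S^1$, so $|j^\ast\Lambda[n]| \cong |\Delta[n]| \times S^1 \cong S^1 \times |\Delta[n]|$, which is the claimed value.

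Finally, naturality in $[n]$: a morphism $\alpha\colon [n]\to[m]$ in $\Delta$ induces $\iota(\alpha)_\ast\colon \Lambda[n]\to\Lambda[m]$, hence a map $j^\ast\Lambda[n]\to j^\ast\Lambda[m]$. Under the canonical-factorization identification, postcomposition with $\iota(\alpha)$ sends $(\psi,g) = \iota(\psi)\circ g$ to the canonical factorization of $\iota(\alpha)\circ\iota(\psi)\circ g = \iota(\alpha\circ\psi)\circ g$, i.e.\ to $(\alpha\circ\psi,\ g)$ — the $\Lambda_\ast$-coordinate is untouched and the $\Delta[n]$-coordinate transforms by $\Delta[\alpha]$. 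Thus the square identifying $j^\ast\Lambda[-]$ with $\Delta[-]\times\Lambda_\ast$ commutes with the maps induced by $\alpha$, and realizing gives the natural isomorphism with $|\Delta[-]|\times S^1 \cong S^1\times|\Delta[-]|$. The one point demanding care — and the main obstacle — is the bookkeeping in the first step: making sure the canonical factorization really does turn the $\Lambda$-simplicial structure into the \emph{diagonal} structure on the product (as opposed to some twisted action), which is exactly why invoking Construction \ref{const:sset_of_csg} rather than recomputing by hand is the right move, and why Lemma \ref{lem:degnsimp} (that $\Lambda_\ast$ is the circle, with its explicit non-degenerate simplices) is what makes $|\Lambda_\ast| \cong S^1$ usable here.
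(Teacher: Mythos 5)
There is a genuine gap, and it is exactly the point you flagged as ``the main obstacle'' and then waved away. The canonical-factorization bijection $\Hom_\Lambda(\langle k\rangle,\langle n\rangle)\cong\Hom_\Delta([k],[n])\times\Aut_\Lambda(\langle k\rangle)$ is \emph{not} a simplicial isomorphism onto $\Delta[n]\times\Lambda_\ast$ with the product (diagonal) structure. You computed the structure map correctly: precomposition with $\iota(\varphi)$ sends $(\psi,g)$ to $(\psi\circ\overline{\varphi},\,\varphi^\ast(g))$, where $\overline{\varphi}$ is the $\Delta$-part of the canonical factorization $g\circ\iota(\varphi)=\iota(\overline{\varphi})\circ\varphi^\ast(g)$. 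But you then concluded that the first coordinate ``transforms by the simplicial structure of $\Delta[n]$''; that structure is $\psi\mapsto\psi\circ\varphi$, whereas $\overline{\varphi}$ genuinely depends on $g$. Concretely, in $\Lambda[1]$ the $1$-simplex $\tau_1=(\id_{[1]},\tau_1)$ satisfies $d_0(\tau_1)=\tau_1\circ d_0=d_1$, whose canonical factorization is $(\delta_1,\id)$, while in $\Delta[1]\times\Lambda_\ast$ one gets $(\delta_0,\id)$. So the projection $(\psi,g)\mapsto g$ to $\Lambda_\ast$ is a simplicial map, but the projection $(\psi,g)\mapsto\psi$ to $\Delta[n]$ is not, and the strategy ``exhibit a simplicial isomorphism with the product and then realize'' is not available as written. (Invoking Construction \ref{const:sset_of_csg} does not rescue this: that construction is precisely where the twist $\overline{\varphi}\neq\varphi$ comes from.)

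The paper's proof is built to circumvent exactly this. It keeps the simplicial projection to $\Lambda_\ast$ for the $S^1$-factor, but defines the second coordinate only at the level of realizations, by $[(\varphi,g),x]\mapsto[\varphi,\,g\cdot x]$, where $g\in\Aut_\Lambda(\langle k\rangle)$ acts on the topological simplex $|\Delta[k]|$ by permuting vertices. The commutation relation $\iota(\overline{\varphi})\circ\varphi^\ast(g)=g\circ\iota(\varphi)$ is then what makes this assignment compatible with the coend relations defining $|{-}|$: the topological action of $g$ absorbs precisely the twist that breaks your simplicial product decomposition. Once the homeomorphism is constructed this way, the remaining ingredients you cite are fine and match the paper's: $|\Lambda_\ast|\cong S^1$ from Proposition \ref{prop:realization_Lambda_star}, and naturality in $[n]$ via postcomposition with $\iota(\alpha)$, which leaves the automorphism coordinate untouched. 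To repair your write-up, you would need either to adopt the paper's realization-level map, or to verify directly that $(\psi,g)\mapsto(\psi,g)$ intertwines the twisted structure with the diagonal one --- which the computation above shows it does not.
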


\begin{proof}
    We construct a map between the two, following \cite[Prop.\ 5.1]{fl}, and leave it to the reader to verify it is natural and a homeomorphism. A consequence of the canonical factorization from Proposition \ref{prop:Lambda_CF} is that we can write each $k$-simplex of $\Lambda[n]$ uniquely as a pair $(\varphi,g)$, where $\varphi \colon [k]\to [n]$ is a morphism in $\Delta$ and $g$ is an automorphism of $\langle k\rangle $ in $\Lambda$. 
	
    Recall from Notation \ref{notation:Lambda_star} that $\Lambda_\ast$ denotes the simplicial set associated to $\Lambda$, and from Proposition \ref{prop:realization_Lambda_star} that the realization of this simplicial set is homeomorphic to the circle.  Explicitly checking the action of morphisms in $\Delta$ shows that the projection $\Lambda[n]\to \Lambda_\ast$ given by $(\varphi,g) \mapsto g$ is a simplicial map. On the other hand, we can define a continuous map 
    \[ \begin{tikzcd}[row sep=0em]
    {|\Lambda[n]|} \arrow[r] & {|\Delta[n]|}\\
    {[(\varphi,g),x]} \arrow[r,mapsto] & {[\varphi,g\cdot x]}
    \end{tikzcd} \]
    where the automorphisms of $\langle k\rangle$ act on $|\Delta[k]|$ by permuting the vertices. Together, these two maps define the desired continuous maps $|\Lambda[n]|\to |\Lambda_\ast| \times |\Delta[n]|\cong S^1\times |\Delta[n]|$. See Figure \ref{fig:Lambda_2} for a depiction of $j^\ast\Lambda[2]$.
\end{proof}

\begin{figure}[h!]
    \begin{tikzpicture}[decoration={markings,mark=at position 0.5 with \arrow{>}}]
		\path[fill=black] (0,0) circle (0.05);
		\path (0,0) node[label=below:$0$] {};
		\path[fill=black] (4,0) circle (0.05);
		\path (4,0) node[label=below:$1$] {};
		\draw[postaction={decorate}] (0,0) to (4,0);
		\draw[postaction=decorate] (0,0) to[out=150,in=180] (0,1.5) to[out=0,in=30] (0,0);
		\draw[dashed,postaction=decorate] (4,0) to[out=150,in=180] (4,1.5) to[out=0,in=30] (4,0);
		\draw (4,1.5) to[out=0,in=30] (4,0);
		\draw[dashed,postaction={decorate}] (0,0) to[out=45,in=180] (2,1.5) to[out=0,in=135] (4,0);
		\draw (2,1.5) to[out=0,in=135] (4,0);
		\path[fill=blue,opacity=0.2] (0,0) to[out=150,in=180] (0,1.5) to (2,1.5) to[out=180,in=45] (0,0) -- cycle;
		\path[fill=red,opacity=0.2] (0,0) to[out=45,in=180] (2,1.5) to (4,1.5) to[out=180,in=150] (4,0) --cycle;
		\path[fill=blue,opacity=0.2]  (0,0) to (4,0) to[out=135,in=0] (2,1.5) to (0,1.5) to[out=0,in=30] (0,0) --cycle;
		\path[fill=red,opacity=0.2] (2,1.5) to[out=0,in=135] (4,0) to[out=30,in=0] (4,1.5)  --cycle;
	\end{tikzpicture}
	\caption{A depiction of the simplicial set $j^\ast\Lambda[1]$ with the two non-degenerate 2-simplices pictured in red and blue.} \label{fig:Lambda_2}
\end{figure}
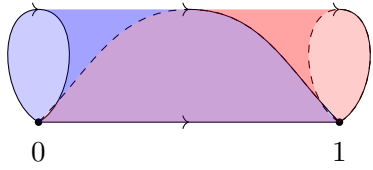

Since every functor from a small category to $\Set$ is a colimit of representables, and realization functors preserve colimits, we obtain two formulas for the realization of a cyclic set $X$. The first is the more common formula in terms of the underlying simplicial set:
\[ |j^\ast X|\cong \colim\limits_{\Delta[-]\to j^\ast X} |\Delta[-]|,
\]
where the colimit is taken over the overcategory $\Delta/X$. The second is similar in form:
\[ |j^\ast X|\cong \colim_{\Lambda[-]\to X} |j^\ast\Lambda[-]|,
\]
but the colimit is taken instead over the overcategory $\Lambda/X$.

These two ways of computing $|j^\ast X|$ suggest two different ways of thinking about the elements of $X_n$. On the one hand, we can think of $x\in X_n$ as an $n$-simplex of the underlying simplicial set. On the other, we can think of $x\in X_n$ as something we might call an ``{$n$-cyclex}": a map from $\Lambda[n]\to X$, or a specified copy of $S^1\times \Delta[n]$ in the realization. One of the key distinctions between these two perspectives is that there can be multiple ``cyclices" that are isomorphic, but not equal.  This feature is a reflection of the fact that $\Lambda$ admits non-trivial automorphisms, while $\Delta$ does not. 

Returning to our discussion of realizations of cyclic sets, Proposition \ref{prop:cyc_reps_circle_bundles} has an immediate corollary. 

\begin{cor} \label{dhk2.2} %\jbnote{I find this wording confusing.  What maps are we thinking about?} \wsnote{Rewrote for clarity. Is this better.}
    The induced functor $j^*\Lambda[-] \colon \Lambda \rightarrow \SSets$ sends every morphism in $\Lambda$ to a weak equivalence. 
\end{cor}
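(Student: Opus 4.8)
The plan is to leverage Proposition \ref{prop:cyc_reps_circle_bundles}, which identifies the composite functor $|j^\ast\Lambda[-]|$ with $S^1\times|\Delta[-]|$ as a functor $\Delta^{\op}\to\Top$. However, this only handles morphisms in $\Delta$ directly, whereas we need every morphism in $\Lambda$ — including the cyclic automorphisms $\tau_n$ — to be sent to a weak equivalence after realization. So the argument will proceed in two stages.

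First, for a morphism $\varphi\colon\langle n\rangle\to\langle m\rangle$ in $\Delta$ (i.e.\ one in the image of $\iota$), the naturality statement in Proposition \ref{prop:cyc_reps_circle_bundles} says that $|j^\ast\Lambda[\varphi]|$ is, up to the natural isomorphism $|j^\ast\Lambda[-]|\cong S^1\times|\Delta[-]|$, the map $\id_{S^1}\times|\Delta[\varphi]|$. Since $|\Delta[n]|$ and $|\Delta[m]|$ are both contractible, $|\Delta[\varphi]|$ is a weak equivalence, hence so is $\id_{S^1}\times|\Delta[\varphi]|$ (a product of weak equivalences of nice spaces, or more simply: both source and target are homotopy equivalent to $S^1$ and the map respects that identification). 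So every $\Delta$-morphism is sent to a weak equivalence.

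Second, for the automorphisms: by the canonical factorization of Proposition \ref{prop:Lambda_CF}, every morphism in $\Lambda$ factors as $\iota(f)\circ\psi$ with $f$ in $\Delta$ and $\psi$ an automorphism of the source, so it suffices to treat automorphisms $\psi\colon\langle n\rangle\to\langle n\rangle$. But an automorphism is an isomorphism in $\Lambda$, so $j^\ast\Lambda[\psi]$ is an isomorphism of simplicial sets, and $|j^\ast\Lambda[\psi]|$ is a homeomorphism, in particular a weak equivalence. (Concretely, from the proof of Proposition \ref{prop:cyc_reps_circle_bundles}, $\psi$ acts by rotating the $S^1$-factor and permuting vertices of the simplex, which is visibly a homeomorphism.) Composing, $|j^\ast\Lambda[\varphi]| = |j^\ast\Lambda[\iota(f)]|\circ|j^\ast\Lambda[\psi]|$ is a composite of weak equivalences, hence a weak equivalence.

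The only point requiring any care — the ``main obstacle,'' such as it is — is making sure the naturality in Proposition \ref{prop:cyc_reps_circle_bundles} is being invoked correctly: that proposition gives naturality over $\Delta^{\op}$, and what we need for general $\Lambda$-morphisms is precisely supplied by the canonical factorization, which reduces everything to $\Delta$-morphisms (handled by naturality) and automorphisms (handled trivially since they become homeomorphisms). No genuine computation is needed beyond observing that $|\Delta[n]|\simeq\ast$.
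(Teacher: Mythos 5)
Your proposal is correct and fills in exactly what the paper leaves implicit when it calls this an ``immediate'' consequence of Proposition \ref{prop:cyc_reps_circle_bundles}: naturality over $\Delta$ handles the morphisms in the image of $\iota$, automorphisms realize to homeomorphisms, and the canonical factorization of Proposition \ref{prop:Lambda_CF} reduces the general case to these two. You are also right to flag that merely knowing source and target are both weakly equivalent to $S^1$ would not suffice, so the factorization step is genuinely needed; no further changes required.
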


With this corollary in place, we can now explore the homotopy theory of cyclic sets, as first described by Dwyer, Hopkins, and Kan in \cite[3.1]{dhk}.  We give a new proof here, using a recognition theorem establishing that the model structure for cyclic sets is cofibrantly generated, a fact that is implicit but not emphasized in the original proof.  

\begin{theorem} \label{dhk3.1} 
    The category $c\Sets$ of cyclic sets admits a cofibrantly generated model structure in which a map $X \rightarrow Y$ is:
    \begin{itemize}
        \item a weak equivalence or a fibration if the induced map $j^*X \rightarrow j^*Y$ is a weak equivalence or fibration, respectively, in $\SSets$; and 

        \item a cofibration if it is a retract of a (possibly transfinite) composite of cobase extensions along the inclusions $\partial \Lambda[n] \rightarrow \Lambda[n]$ for $n \geq 0$. 
    \end{itemize} 
\end{theorem}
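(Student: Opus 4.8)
The plan is to obtain this model structure by transfer along the adjunction $j_! \dashv j^*$, where $j^* \colon c\Sets \to \SSets$ is the forgetful functor and $j_!$ is its left adjoint (left Kan extension along $j \colon \Delta \to \Lambda$); the right adjoint $j_*$ exists as well since $c\Sets$ is a presheaf category. The transfer (Kan recognition) theorem says: given a cofibrantly generated model category $\mathcal{M}$ and an adjunction $F \dashv U$ with $U \colon \mathcal{N} \to \mathcal{M}$ from a bicomplete category $\mathcal{N}$, there is a cofibrantly generated model structure on $\mathcal{N}$ with weak equivalences and fibrations created by $U$, with generating (acyclic) cofibrations $F(I)$ and $F(J)$, provided two hypotheses hold: (a) $F$ takes the generating acyclic cofibrations of $\mathcal{M}$ to weak equivalences in $\mathcal{N}$, and (b) the domains of $F(I)$ and $F(J)$ are small relative to $F(I)$-cell (resp.\ $F(J)$-cell), or more simply that relative $F(J)$-cell complexes are weak equivalences. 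Since $j^* = U$ preserves colimits, $j^*$ of a relative $F(J)$-cell complex built on the generating acyclic cofibrations $\partial\Delta[n] \times \Delta[1] \cup \cdots \hookrightarrow \Delta[n]\times\Delta[1]$ (or horn inclusions) is itself a transfinite composite of pushouts of maps of the form $j^* j_! (\text{gen.\ acyclic cofib})$; the smallness is automatic because both categories are presheaf categories on small categories, hence locally presentable.

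The key step, and the main obstacle, is verifying hypothesis (a): that $j_!$ sends a generating acyclic cofibration of $\SSets$ to a weak equivalence in $c\Sets$, where a map of cyclic sets is a weak equivalence iff its image under $j^*$ is. Equivalently, I need that for each generating acyclic cofibration $A \hookrightarrow B$ in $\SSets$, the map $j^* j_! A \to j^* j_! B$ is a weak homotopy equivalence. The heart of this is the computation of $j^* j_! \Delta[n]$. By the coend/colimit formula for left Kan extensions, $j_! \Delta[n] = j_! (\Hom_\Delta(-,[n]))$; since $j$ is fully faithful on objects and $\Lambda[n]$ represents $\langle n\rangle$, one has $j_!\Delta[n] \cong \Lambda[n]$ (the left Kan extension of a representable along a functor is the representable at the image object). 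Therefore $j^* j_! \Delta[n] \cong j^*\Lambda[n]$, and Proposition~\ref{prop:cyc_reps_circle_bundles} identifies this with $S^1 \times \Delta[n]$ up to weak equivalence — more precisely, Corollary~\ref{dhk2.2} says $j^*\Lambda[-]$ carries every morphism of $\Lambda$ to a weak equivalence, and hence (since $j_!$ preserves colimits and every simplicial set is a colimit of representables) $j^* j_!$ of any monomorphism of simplicial sets that is a weak equivalence is again a weak equivalence, by a standard argument comparing the two colimit diagrams and using that $S^1 \times (-)$ preserves the relevant homotopy colimits. Concretely: if $A \hookrightarrow B$ is an acyclic cofibration of simplicial sets, then $j^*j_! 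A \to j^* j_! B$ is obtained by applying $j^* j_!$ cellwise; on each cell we are crossing with $S^1$ up to homotopy, and crossing a weak equivalence of spaces (or simplicial sets) with $S^1$ yields a weak equivalence, and the gluing is along cofibrations, so the comparison map of pushouts is a weak equivalence by the gluing lemma. Iterating transfinitely (weak equivalences of simplicial sets are closed under the relevant filtered colimits along cofibrations) gives the claim.

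With (a) in hand the recognition theorem immediately produces a cofibrantly generated model structure whose weak equivalences and fibrations are exactly those maps $X \to Y$ with $j^*X \to j^*Y$ a weak equivalence, resp.\ fibration, of simplicial sets, and whose generating cofibrations are $j_!(\partial\Delta[n] \hookrightarrow \Delta[n]) = (\partial\Lambda[n] \hookrightarrow \Lambda[n])$ — here I should note that $j_! \partial\Delta[n] \cong \partial\Lambda[n]$, which follows since $\partial\Delta[n]$ is the colimit of the $\Delta[n-1]$'s along their face inclusions and $j_!$ preserves this colimit, matching the analogous presentation of $\partial\Lambda[n]$. The cofibrations of the transferred structure are by definition the retracts of transfinite composites of cobase extensions of the generating cofibrations, which is precisely the description in the statement. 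The final bookkeeping is to confirm the two-out-of-three property and the retract and lifting axioms, but these are all inherited formally from $\SSets$ via the recognition theorem, so no further work is required. I would remark at the end that this presentation makes manifest the cofibrant generation — the feature highlighted in the paper as implicit but not emphasized in the original argument of Dwyer, Hopkins, and Kan — and that one can alternatively check the single condition that every relative $j_!(J)$-cell complex is a weak equivalence, which is exactly what the cellwise argument above establishes, sidestepping the need to separately verify the acyclicity condition on the other side.
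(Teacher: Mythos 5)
Your proposal is correct and follows essentially the same route as the paper: both transfer the model structure from $\SSets$ along the left-Kan-extension/restriction adjunction using a recognition theorem for cofibrantly generated model structures, and both rest on the same key computation that the left Kan extension sends $\Delta[n]$ to $\Lambda[n]$, so that $|j^*j_!(-)| \cong S^1 \times |-|$ and hence $j^*j_!$ carries horn inclusions to acyclic cofibrations. The only (minor) divergence is in the last step of the acyclicity check: you conclude directly that relative $j_!(J)$-cell complexes are weak equivalences via colimit preservation and closure properties of acyclic cofibrations, whereas the paper transposes through the right adjoint of $j^*$ to show it preserves fibrations and then runs a lifting argument for arbitrary $J$-cofibrations; both are standard and the underlying content is identical.
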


\begin{proof}
    We claim that we can take as generating cofibrations the set
    \[ I = \{\partial \Lambda[n] \rightarrow \Lambda[n] \mid n \geq 0 \} \]
    and as generating acyclic cofibrations the set
    \[ J = \{\Lambda[n,k] \rightarrow \Lambda[n] \mid n \geq 1, 0 \leq k \leq n\}. \]
    Using these sets, we want to verify the conditions of \cite[11.3.1]{hirsch}.  
	
    First, observe that the category $c\Sets$ has all limits and colimits, and that the weak equivalences are closed under retracts and satisfy the two-out-of-three property.  It is also not hard to check that the sets $I$ and $J$ satisfy the small object argument, establishing condition (1) of \cite[11.3.1]{hirsch}.
	
    One can check from the definitions of fibration and acyclic fibration, or see \cite[3.2, 3.3]{dhk}, that a map in $c\Sets$ is a fibration if and only if it has the right lifting property with respect to the maps in $J$, and that a map is an acyclic fibration if and only if it has the right lifting property with respect to the maps in $I$.  It follows immediately that a map is an $I$-injective if and only if it is a $J$-injective and a weak equivalence, establishing conditions (2) and (4)(b) of \cite[11.3.1]{hirsch}.  
	
    It remains to show that every $J$-cofibration is an $I$-cofibration and a weak equivalence.  If a map $A \rightarrow B$ is a $J$-cofibration, then it has the left lifting property with respect to the fibrations, using the results from the previous paragraph.  In particular, it has the left lifting property with respect to the acyclic fibrations, from which it follows, again by the results of the previous paragraph, that it is an $I$-cofibration.  It remains to show that $A \rightarrow B$ is a weak equivalence.  
	
    Denote the left and right adjoints of $j^\ast$ by $j_\ast$ and $j_!$ respectively, and note that  $j_\ast \Delta[n]\cong \Lambda[n]$. Consider a horn inclusion $\Lambda^k[n]\to \Delta[n]$. Writing each of these two simplicial sets as a colimit over its category of non-degenerate simplices, and applying $j^\ast \circ j_\ast$, we see that the morphism
    \[  \begin{tikzcd}
    j^\ast (j_\ast \Lambda^k[n]) \arrow[r] & j^\ast(j_\ast \Delta[n]) 
    \end{tikzcd} \]
    is isomorphic to the morphism 
    \[ \begin{tikzcd}
    \colim_{(\Delta \downarrow \Lambda^k[n])_{ND}} j^\ast \Lambda[r] \arrow[r] & \colim_{(\Delta \downarrow \Delta[n])_{ND}} j^\ast \Lambda[r] 
    \end{tikzcd} \]
    induced by the inclusion of categories of simplices. Applying the geometric realization and using Proposition \ref{prop:cyc_reps_circle_bundles} yields the morphism 
    \[ \begin{tikzcd}
    \colim_{(\Delta \downarrow \Lambda^k[n])_{ND}} S^1\times \Delta^r  \arrow[r] & \colim_{(\Delta \downarrow \Delta[n])_{ND}} S^1\times \Delta^r.  
    \end{tikzcd} \]
    However, this morphism is isomorphic to $S^1\times |\Lambda^k[n]|\to S^1\times \Delta^n$, which is a homotopy equivalence. As such we see that $j^\ast(j_\ast\Lambda^k[n]) \to j^\ast(j_\ast \Delta[n])$ is a weak equivalence. It is not hard to check that this morphism is also a monomorphism, and thus is an acyclic cofibration in $\SSets$. By adjointness, it follows that if $\pi \colon X\to Y$ is a fibration in $\SSets$, then the induced map $j^\ast(j_!X)\to j^\ast(j_! Y)$ is also a fibration.  Now suppose that $f\colon A\to B$ is a $J$-cofibration, and let $\pi \colon X\to Y$ be a fibration in $\SSets$. A lifting problem 
    \[ \begin{tikzcd}
    j^\ast A \arrow[r]\arrow[d,"f"'] & X \arrow[d,"\pi"]\\
    j^\ast B \arrow[ur,dashed]\arrow[r] & Y 
    \end{tikzcd} \]
    is equivalent to a lifting problem
    \[ \begin{tikzcd}
	A \arrow[r]\arrow[d,"f"'] & j_!X \arrow[d,"j_!(\pi)"]\\
		B \arrow[r]\arrow[ur,dashed] & j_! Y.
    \end{tikzcd} \]
    However, by the preceding paragraph, the morphism $j_!(\pi)$ is a fibration, and so this lifting problem has a solution. Thus, $j^\ast A\to j^\ast B$ is an acyclic cofibration of simplicial sets, so that $A\to B$ is a weak equivalence as desired.
\end{proof}

Some of the motivation for this model structure is its comparison with the following model structure on $S^1$-spaces, or topological spaces equipped with an $S^1$-action.  We denote by $\Top^{S^1}$ the category of $S^1$ spaces with equivariant continuous maps. 

\begin{theorem} \cite[4.1]{dhk} \label{dhk4.1} 
    The category $\Top^{S^1}$ of $S^1$-spaces admits a model structure in which a map $X \rightarrow Y$ is:
    \begin{itemize}
        \item a weak equivalence or a fibration whenever the underlying map of topological spaces $X \rightarrow Y$ is a weak homotopy equivalence or a Serre fibration, respectively; and

        \item a cofibration if it is a retract of a (possibly transfinite) composite of cobase extensions along the inclusions $S^1 \times |\partial \Delta[n]| \rightarrow S^1 \times |\Delta[n]|$ for $n \geq 0$.
    \end{itemize} 
\end{theorem}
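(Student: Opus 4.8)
The plan is to obtain this model structure by transferring the standard Quillen model structure on $\Top$ along the free--forgetful adjunction
\[
S^1\times(-)\colon \Top \rightleftarrows \Top^{S^1} \colon U,
\]
in which $U$ forgets the $S^1$-action and its left adjoint equips $S^1\times X$ with the action of $S^1$ by multiplication on the first coordinate. The category $\Top^{S^1}$ is complete and cocomplete, with all limits and colimits computed in $\Top$ and endowed with the induced action; in particular $U$ preserves all colimits. Recall also that $\Top$ is cofibrantly generated, with generating cofibrations $I=\{|\partial\Delta[n]|\to|\Delta[n]|\mid n\geq 0\}$ and with generating acyclic cofibrations $J$ given by the horn inclusions $|\Lambda^k[n]|\to|\Delta[n]|$, each of which is the inclusion of a strong deformation retract.

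I would take as candidate generating sets in $\Top^{S^1}$ the images $FI=\{S^1\times|\partial\Delta[n]|\to S^1\times|\Delta[n]|\mid n\geq 0\}$ and $FJ=\{S^1\times|\Lambda^k[n]|\to S^1\times|\Delta[n]|\}$ under $F=S^1\times(-)$, and then verify the hypotheses of \cite[11.3.1]{hirsch} for these sets, exactly as in the proof of Theorem \ref{dhk3.1}. Several conditions are formal: $\Top^{S^1}$ has all limits and colimits; the weak equivalences, being those maps sent by $U$ to weak homotopy equivalences, are closed under retracts and satisfy two-out-of-three because weak homotopy equivalences in $\Top$ do; and $FI$ and $FJ$ permit the small object argument since their domains and codomains are compact, hence small relative to the relevant cell complexes. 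Moreover, by the adjunction, a map $f$ of $\Top^{S^1}$ is $FI$-injective iff $Uf$ is $I$-injective, i.e.\ an acyclic fibration in $\Top$, which is the same as $f$ being both a fibration and a weak equivalence in the candidate structure; and likewise $f$ is $FJ$-injective iff $f$ is a fibration. This identifies the $FI$-injectives with the maps that are simultaneously $FJ$-injective and weak equivalences, which is conditions (2) and (4)(b) of \cite[11.3.1]{hirsch}.

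The one substantive point is to show that every $FJ$-cofibration is an $FI$-cofibration and a weak equivalence. As in the proof of Theorem \ref{dhk3.1}, it suffices to check that every relative $FJ$-cell complex is a weak equivalence; a general $FJ$-cofibration, being a retract of such, then has the left lifting property against all fibrations and hence against acyclic fibrations, so it is an $FI$-cofibration, and it is a weak equivalence by retract closure. Now a relative $FJ$-cell complex is a transfinite composite of cobase extensions of the maps in $FJ$, and $U$ preserves these colimits, so $U$ of it is a transfinite composite of cobase extensions, taken in $\Top$, of the maps $S^1\times|\Lambda^k[n]|\to S^1\times|\Delta[n]|$. Each of the latter is $\id_{S^1}$ times a strong deformation retract inclusion, hence is itself a strong deformation retract inclusion, in particular an acyclic cofibration of spaces; and acyclic cofibrations in $\Top$ are closed under cobase extension and transfinite composition. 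Thus $U$ of a relative $FJ$-cell complex is an acyclic cofibration, in particular a weak equivalence.

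The main obstacle is concentrated in this last verification, but it is mild: the real content is that the free functor $S^1\times(-)$ carries the elementary acyclic cofibrations of $\Top$ to maps that remain strong deformation retract inclusions, a property preserved by the colimits appearing in cell constructions. With \cite[11.3.1]{hirsch} applied, $\Top^{S^1}$ acquires a cofibrantly generated model structure whose weak equivalences and fibrations are created by $U$ --- matching the first bullet of the theorem --- and whose cofibrations are the retracts of relative $FI$-cell complexes, that is, retracts of transfinite composites of cobase extensions of the maps $S^1\times|\partial\Delta[n]|\to S^1\times|\Delta[n]|$ --- matching the second bullet. One could alternatively cite a general existence theorem for the projective model structure on $G$-spaces; the argument above is its concrete incarnation for $G=S^1$, and runs closely parallel to the proof of Theorem \ref{dhk3.1}.
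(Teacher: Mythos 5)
Your proposal is correct, but note that the paper does not actually prove this statement: Theorem \ref{dhk4.1} is quoted directly from \cite[4.1]{dhk} with no argument supplied, so there is no in-text proof to compare against. What you have written is the standard transfer (or lifting) of the Quillen model structure on $\Top$ along the free--forgetful adjunction $S^1\times(-)\dashv U$, carried out via the recognition theorem \cite[11.3.1]{hirsch} in deliberate parallel with the paper's proof of Theorem \ref{dhk3.1}; this is a perfectly good self-contained route to the result, and it buys you for free the identification of the cofibrations as retracts of relative $FI$-cell complexes, which is exactly the second bullet of the statement. One justification deserves tightening: you infer ``strong deformation retract inclusion, in particular an acyclic cofibration of spaces,'' but an SDR inclusion need not be a cofibration in the Quillen model structure, so that implication is not valid as stated. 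It is harmless here because the maps $S^1\times|\Lambda^k[n]|\to S^1\times|\Delta[n]|$ are relative CW inclusions (product of a CW pair with the cofibrant CW complex $S^1$), hence genuine cofibrations, and being weak equivalences they are acyclic cofibrations; alternatively you could avoid the model-categorical closure properties entirely by observing that these maps are closed Hurewicz cofibrations and SDR inclusions, a class closed under cobase change and transfinite composition, so that $U$ of any relative $FJ$-cell complex is already an SDR inclusion and in particular a weak equivalence. With that repair the argument is complete.
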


To compare these two model structures, it is necessary to promote the realization of cyclic sets from a functor 
\[ \begin{tikzcd}
	{|j^\ast(-)|} \colon &[-3em] c\Set \arrow[r] & \Top
\end{tikzcd} \]  
to a functor 
\[ \begin{tikzcd}
    L^c \colon &[-3em] c\Set \arrow[r] & \Top^{S^1}.
\end{tikzcd} \]
The construction of $L^c$ turns out to be purely formal. Since colimits in $\Top^{S^1}$ are preserved by the functor which sends each $S^1$-space to its underlying topological space, we can assign 
\[ L^c(X):=\colim_{\Lambda[-]\to X} |j^\ast\Lambda[-]| \]
where the colimit is taken in $\Top^{S^1}$ and the realizations $|j^\ast\Lambda[n]|\cong S^1\times |\Delta[n]|$ are equipped with the canonical $S^1$-action on the first factor. We thus obtain the following result. 

\begin{prop} \label{dhk2.8} \cite[2.8]{dhk}
    There is a functor $L^c \colon c\Sets \rightarrow \Top^{S^1}$ such that the diagram
    \[ \xymatrix{c\Sets \ar[r]^{L^c} \ar[dr]_{|j^*|} & \Top^{S^1} \ar[d] \\
    & \Top} \]
    commutes up to natural equivalence.
\end{prop}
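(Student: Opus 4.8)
The plan is to take the formula
\[ L^c(X) := \colim_{\Lambda[n]\to X} |j^\ast\Lambda[n]| \]
displayed just above — with the colimit formed in $\Top^{S^1}$ and each $|j^\ast\Lambda[n]|\cong S^1\times|\Delta[n]|$ carrying the $S^1$-action on its first factor — as the definition of $L^c$, and then to verify that this is a functor whose composite with the forgetful functor to $\Top$ is $|j^\ast(-)|$. The three formal inputs I would invoke are: that $\Top^{S^1}$ is cocomplete; that the forgetful functor $U\colon \Top^{S^1}\to\Top$ both preserves and reflects colimits, since a colimit of $S^1$-spaces is computed on underlying spaces with the induced action; and the colimit presentation of the realization recorded above, namely $|j^\ast X|\cong \colim_{\Lambda[n]\to X}|j^\ast\Lambda[n]|$, naturally in $X$.

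First I would make the diagram being colimited precise. The assignment $\langle n\rangle\mapsto |j^\ast\Lambda[n]|$ underlies a functor $\Lambda\to\Top$ (it is $|j^\ast(-)|$ composed with the Yoneda embedding $\langle n\rangle\mapsto\Lambda[n]$), and by the theorem of Fiedorowicz--Loday recalled above, applied to the crossed simplicial group $\Lambda$ — for which $|\Lambda_\ast|\cong S^1$ by Proposition \ref{prop:realization_Lambda_star} — the realization of a cyclic set is functorially an $S^1$-space (cf.\ \cite[Thm 5.3]{fl}). In particular each $|j^\ast\Lambda[n]|$ is an $S^1$-space and every morphism of $\Lambda$ acts $S^1$-equivariantly, yielding a lift $|j^\ast\Lambda[-]|\colon\Lambda\to\Top^{S^1}$; under the identification of Proposition \ref{prop:cyc_reps_circle_bundles} this action is the standard one on the $S^1$-factor of $S^1\times|\Delta[n]|$, which can also be read off directly from the explicit homeomorphism built in that proof. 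For a fixed cyclic set $X$, precomposing with the projection $\Lambda/X\to\Lambda$ gives a diagram $\Lambda/X\to\Top^{S^1}$, and $L^c(X)$ is its colimit.

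Functoriality of $L^c$ in $X$ is then routine: a map $f\colon X\to Y$ of cyclic sets induces a functor $\Lambda/X\to\Lambda/Y$ over $\Lambda$, hence a morphism of the associated diagrams in $\Top^{S^1}$, hence a map $L^c(f)\colon L^c(X)\to L^c(Y)$, with compatibility with identities and composition immediate from the universal property of colimits. For the commutativity statement, since $U$ preserves colimits we obtain
\[ U(L^c(X))\;=\;U\!\left(\colim_{\Lambda[n]\to X}|j^\ast\Lambda[n]|\right)\;\cong\;\colim_{\Lambda[n]\to X}U|j^\ast\Lambda[n]|\;=\;\colim_{\Lambda[n]\to X}|j^\ast\Lambda[n]|\;\cong\;|j^\ast X|, \]
where the last isomorphism is the colimit presentation of the realization; naturality of each step in $X$ supplies the natural equivalence $U\circ L^c\simeq|j^\ast(-)|$.

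The only point that is not pure diagram-chasing is the existence of the lift $|j^\ast\Lambda[-]|\colon\Lambda\to\Top^{S^1}$ — that is, that every morphism of $\Lambda$, including the cyclic automorphisms, induces an $S^1$-equivariant map on realizations. This is exactly what the crossed-simplicial-group formalism of \cite{fl} provides, and in the case at hand it is visible from the explicit homeomorphism $|j^\ast\Lambda[n]|\cong S^1\times|\Delta[n]|$ of Proposition \ref{prop:cyc_reps_circle_bundles}; everything else reduces to bookkeeping with colimits and the preservation and reflection properties of $U\colon\Top^{S^1}\to\Top$.
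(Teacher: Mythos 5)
Your proof is correct and follows essentially the same route as the paper: define $L^c(X)$ as the colimit of $|j^\ast\Lambda[-]|$ over $\Lambda/X$ taken in $\Top^{S^1}$, with the action on the $S^1$-factor of $S^1\times|\Delta[n]|$, and deduce the commuting triangle from the fact that the forgetful functor $\Top^{S^1}\to\Top$ preserves colimits together with the colimit presentation of $|j^\ast X|$. Your explicit verification that the diagram lifts to $\Top^{S^1}$ (i.e.\ that morphisms of $\Lambda$ act equivariantly on realizations) is a point the paper leaves implicit, but it is the same argument.
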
 

With the functor $L^c$ in place, we can complete our discussion of the homotopy theory of cyclic sets by providing the comparison between the two model structures. 

\begin{theorem} \label{dhk4.2} \cite[4.2]{dhk}
    The functor $L^c \colon c\Sets \rightarrow \Top^{S^1}$ has as right adjoint the functor $R^c = \Hom(L^c \Lambda[-], -) \colon \Top^{S^1} \rightarrow c\Sets$.  Moreover, this pair of adjoint functors is a Quillen equivalence of model categories.
\end{theorem}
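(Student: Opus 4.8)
The plan is to verify that $(L^c, R^c)$ forms a Quillen adjunction and then upgrade it to a Quillen equivalence by checking that the derived unit and counit are weak equivalences. First I would establish the adjunction: since $L^c$ is defined as a colimit of the representables $|j^*\Lambda[-]|$, it is cocontinuous, and a standard adjoint functor theorem (or the explicit formula $R^c = \Hom(L^c\Lambda[-], -)$ using that cyclic sets are a presheaf category) gives the right adjoint. To see the pair is Quillen, it suffices to check that $L^c$ sends generating cofibrations and generating acyclic cofibrations of $c\Sets$ to (acyclic) cofibrations in $\Top^{S^1}$. By Theorem \ref{dhk3.1} the generating cofibrations are the boundary inclusions $\partial\Lambda[n]\to\Lambda[n]$; applying $L^c$ and using Proposition \ref{prop:cyc_reps_circle_bundles}, these go to maps built from $S^1\times|\partial\Delta[n]|\to S^1\times|\Delta[n]|$, which are exactly the generating cofibrations of $\Top^{S^1}$ in Theorem \ref{dhk4.1}. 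For the horn inclusions $\Lambda[n,k]\to\Lambda[n]$, the same identification shows $L^c$ carries them to $S^1\times|\Lambda^k[n]|\to S^1\times|\Delta^n|$, which are acyclic cofibrations of $S^1$-spaces since the underlying map is an anodyne-type inclusion and a homotopy equivalence. Thus $(L^c, R^c)$ is a Quillen pair.

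**Next** I would prove it is a Quillen equivalence. One clean route: since both model structures are cofibrantly generated and all objects of $c\Sets$ are cofibrant (the cofibrations are generated by monomorphisms of presheaves, so $\emptyset\to X$ is always a cofibration), it suffices to check that for every cyclic set $X$ the composite $X \to R^c L^c X \to R^c(\text{fibrant replacement of }L^c X)$ is a weak equivalence, i.e. that the derived unit is a weak equivalence, together with the fact that $R^c$ reflects weak equivalences between fibrant objects. The derived unit condition unwinds, via the commuting triangle of Proposition \ref{dhk2.8}, to the statement that $j^*X \to \text{Sing}(|j^*X|)$ is a weak equivalence of simplicial sets — but this is just the classical fact that the unit of the $(|-|,\text{Sing})$ adjunction on $\SSets$ is a weak equivalence, combined with the observation that forgetting the $S^1$-action from $R^c$ recovers $\mathrm{Sing}(|-|)$ on underlying spaces (since weak equivalences and fibrations in $\Top^{S^1}$ are detected underlying). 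For $R^c$ reflecting weak equivalences between fibrant objects: a map of $S^1$-spaces is a weak equivalence iff it is so underlying, and $R^c$ applied to it is, underlyingly, $\mathrm{Sing}$ applied to the underlying map, which detects weak homotopy equivalences; one then needs that $R^c$ of a fibrant $S^1$-space has the right homotopy type, which follows from $R^c$ being right Quillen and the underlying-space comparison.

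**The main obstacle** I anticipate is the bookkeeping in the second half: carefully matching up the derived adjunction data with the classical $\SSets$/$\Top$ Quillen equivalence through the forgetful functors to the underlying (simplicial) sets and spaces, and in particular making precise that ``weak equivalences and fibrations are created by the forgetful functor'' on both sides in a way that lets one reduce the $S^1$-equivariant statement to the non-equivariant one. A subtlety is that $R^c$ is only the derived right adjoint after fibrant replacement, and since fibrancy in $\Top^{S^1}$ is also underlying-detected, fibrant replacement can be taken to not change the underlying weak homotopy type — so the reduction goes through, but this needs to be spelled out. The actual homotopical inputs (that $|-|\dashv\mathrm{Sing}$ is a Quillen equivalence on simplicial sets, and that $S^1\times|\Lambda^k[n]|\to S^1\times|\Delta^n|$ is an acyclic cofibration) are standard and quotable; essentially all the real content has already been front-loaded into Proposition \ref{prop:cyc_reps_circle_bundles} and Proposition \ref{dhk2.8}.
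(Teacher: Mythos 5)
The paper does not actually prove this theorem; it is quoted directly from \cite[4.2]{dhk}, so there is no in-text argument to compare against. Your overall strategy --- verify the Quillen pair on generating (acyclic) cofibrations via Proposition \ref{prop:cyc_reps_circle_bundles}, then check the derived unit and that $R^c$ reflects weak equivalences by reducing through the identification $j^*R^cY \cong \operatorname{Sing}(UY)$ to the classical $|-|\dashv\operatorname{Sing}$ equivalence --- is the standard route and is sound in outline.

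There is, however, one genuinely false assertion: it is \emph{not} true that every object of $c\Sets$ is cofibrant in the Dwyer--Hopkins--Kan model structure. The cofibrations are retracts of relative cell complexes built from $\partial\Lambda[n]\to\Lambda[n]$; these are monomorphisms, but the converse fails because of the automorphisms in $\Lambda$ (this is precisely where cyclic sets diverge from simplicial sets, where every simplicial set is assembled by attaching nondegenerate simplices along their boundaries). Concretely, $L^c$ sends cell complexes to free $S^1$-CW complexes --- each cell $S^1\times|\Delta[n]|$ carries the free action on the first factor, and freeness passes to retracts --- so every cofibrant cyclic set realizes to a free $S^1$-space. The terminal cyclic set, whose realization is a point with the trivial $S^1$-action, is therefore not cofibrant; more generally, a cyclic set with a nondegenerate $n$-simplex fixed by a nontrivial power of $\tau_n$ cannot be cofibrant. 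Fortunately the error is not load-bearing: the Quillen-equivalence criterion only requires the derived unit to be an equivalence on \emph{cofibrant} objects, and your verification (which reduces to $K\to\operatorname{Sing}|K|$ being a weak equivalence for every simplicial set $K$) happens to work for all $X$. You should delete the cofibrancy claim and either restrict the unit statement to cofibrant $X$ or note that your argument covers all $X$ anyway. It is also worth saying explicitly that every object of $\Top^{S^1}$ is fibrant (fibrations are detected on underlying spaces, and every space is Serre-fibrant), so no fibrant replacement is needed and the derived unit is the honest unit; this disposes of the bookkeeping you flag as the main obstacle.
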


\section{Levelwise model structures on cyclic spaces} \label{levelwisemcs}

In this section, we consider functors $X \colon \Lambdaop \rightarrow \SSets$, and we use the standard model structure on simplicial sets \cite{quillen} to produce model structures on the category $\SSets^{\Lambdaop}$ of all such functors.  As is usual for categories of functors from a small category, we have: 
\begin{itemize}
    \item the \emph{projective model structure}, in which weak equivalences and fibrations are given levelwise \cite[11.6.1]{hirsch}; and 
    
    \item the \emph{injective model structure}, in which weak equivalences and cofibrations are given levelwise \cite{heller}, \cite[A.2.8]{lurie}.
    %\cite[3.4.1]{HKRS}. \jbnote{Older reference, maybe?  This is not the standard one, but I don't remember what is.}\wsnote{I'm not sure what the standard reference is, which is why I put this one in.}
\end{itemize}
However, there is another model structure we can also consider.

One of the nice features of the category $\Deltaop$ is that it has the structure of a Reedy category \cite{reedy}, and as a consequence the category $\SSets^{\Deltaop}$ of simplicial spaces can be given the Reedy model structure.  Indeed this situation is particularly nice in that the Reedy model structure agrees with the injective structure \cite[15.8.7]{hirsch}.

The category $\Lambda$ is not a Reedy category, since its objects have nontrivial automorphisms, but it does have the structure of a generalized Reedy category in the sense of Berger and Moerdijk \cite{bm}, which we now define.  Recall that a subcategory $\mathcal D$ of a category $\mathcal C$ is \emph{wide} if it contains all the objects of $\mathcal C$.  As an example that plays a role in this definition, we denote by $\Iso(\mathcal C)$ the maximal subgroupoid of $\mathcal C$.

\begin{definition} \cite[1.1]{bm}
Let $\mathcal C$ be a small category.  A \emph{generalized Reedy structure} on $\mathcal C$ consists of wide subcategories $\mathcal C^+$ and $\mathcal C^-$ and a degree function $d \colon \ob(\mathcal C) \rightarrow \mathbb N$ such that:
\begin{enumerate}
    \item noninvertible morphisms in $\mathcal C^+$ raise the degree, while those in $\mathcal C^-$ lower degree, and isomorphisms in $\mathcal C$ preserve the degree;
    
    \item $\mathcal C^+ \cap \mathcal C^- = \Iso(C)$;
    
    \item every morphism $f$ of $\mathcal C$ can be factored as $f=gh$ with $g$ in $\mathcal C^+$ and $h$ in $\mathcal C^-$, and this factorization is unique up to isomorphism; and
    
    \item if $f$ is a morphism of $\mathcal C^-$ and $f \theta = f$ for some isomorphism $\theta$ in $\mathcal C$, then $\theta$ is an identity map. 
\end{enumerate}
\end{definition}

% Using the framework of total categories of crossed simplicial groups,\jbnote{do I want to talk about this here?} \jbnote{We have a section on crossed simplicial groups now, but don't discuss total categories}\wsnote{The approach I took to CSG's is that a CSG \emph{is} the object that Berger and Moerdijk call the total category.} Berger and Moerdijk show that $\Lambda$ has the structure of a generalized Reedy category \cite[2.7]{bm}.  We give a more direct description here.  We take the degree function $d \colon \ob(\Lambda) \rightarrow \mathbb N$ to be given by $\langle n \rangle \mapsto n$; then we can take $\Lambda^+$ to be the wide subcategory whose morphisms preserve or increase degree, and likewise $\Lambda^-$ the wide subcategory whose morphisms preserve or lower degree. 

% \wsnoteil{he following paragraph and remark is my attempt to rewrite the preceding paragraph to clarify the relation of Berger-Moerdijk's crossed $\mathbb{R}$-groups to crossed simplicial groups as we defined them above.}

In the general context of crossed simplicial groups, Berger and Moerdijk show in \cite[2.7]{bm} that $\Lambda$ has the structure of a generalized Reedy category. We give a more direct description here. We take the degree function $d \colon \ob(\Lambda) \rightarrow \mathbb N$ to be given by $\langle n \rangle \mapsto n$; then we can take $\Lambda^+$ to be the wide subcategory whose morphisms preserve or increase degree, and likewise $\Lambda^-$ the wide subcategory whose morphisms preserve or lower degree.  

\begin{remark}
    In \cite[\S 2]{bm}, Berger and Moerdijk introduce a notion of \emph{crossed groups} over a Reedy category that generalize the crossed simplicial groups as we have described above. What we call a crossed simplicial group $\Delta\mathfrak{G}$ is, in their nomenclature, the \emph{total category} associated to the \emph{crossed $\Delta$-group} $\mathfrak{G}_\ast$. As in Remark \ref{rmk:var_defns_csg}, this distinction reflects two different but equivalent ways of defining crossed simplicial groups.
\end{remark}

Let $\Lambda^+(n)$ be the category whose objects are the non-invertible morphisms in $\Lambda^+$ with codomain $\langle n \rangle$ and in which a morphism from $u \colon s \rightarrow r$ to $u' \colon s' \rightarrow r$ is given by a morphism $w \colon s \rightarrow s'$ such that $u=u'w$.  The automorphism group $\Aut(\langle n \rangle) =C_{n+1}$ acts on $\Lambda^+(n)$ by composition.  For each functor $X \colon \Lambdaop \rightarrow \SSets$ and each $n \geq 0$, the $n$-\emph{th latching object} $L_n(X)$ of $X$ is defined to be 
\[ L_n(X) = \underset{\langle m \rangle \to \langle r \rangle}{\colim} X_s \]
with colimit taken over $\Lambda^+(n)$. Observe that $\Aut(\langle n \rangle) = C_{n+1}$ acts on $L_n(X)$.

Dually, we can define the category $\Lambda^-(n)$ with objects the non-invertible morphisms in $\Lambda^-$ with domain $\langle n \rangle$ and morphisms defined analogously.  Again, the automorphism group $\Aut(\langle n \rangle) = C_{n+1}$ acts on $\Lambda^-(n)$ by precomposition.  Then given a functor $X \colon \Lambdaop \rightarrow \SSets$ the $n$-\emph{th matching object} $M_n(X)$ of $X$ is defined to be 
\[ M_n(X) = \lim_{\langle n \rangle \to \langle m \rangle} X_s \]
with limit taken over the category $\Lambda^-(n)$, and observe that it has an action of $C_{n+1}$.  

Just as for ordinary categories, the natural maps $L_n(X) \rightarrow X_n \rightarrow M_n(X)$ induce \emph{relative latching maps}
\[ X_n \cup_{L_n(X)} L_n(Y) \rightarrow Y_n \]
and \emph{relative matching maps}
\[ X_n \rightarrow M_n(X) \times_{M_n(Y)} Y_n \]
for all $n \geq 0$.

Now, consider the category of $C_{n+1}$-simplicial sets, which we regard as the category of functors $C_{n+1} \rightarrow \SSets$, where $C_{n+1}$ is treated as a category with a single object.  We can equip this category with the projective model structure, in which weak equivalences and fibrations are given levelwise.  Using the discussion above, we can think of the relative latching maps and relative matching maps as morphisms in this category.

The following theorem is the specialization of \cite[1.6]{bm} to the case of cyclic spaces; we refer to the model structure described here as the \emph{generalized Reedy model structure}.  Observe that, in analogy with simplicial spaces, we want to take functors out of $\Lambdaop$, we simplify the notation in light of the isomorphism between $\Lambda$ and $\Lambdaop$ here and in what follows.

\begin{theorem}
There is a model structure on the category $\SSets^{\Lambda}$ in which a map $f \colon X \rightarrow Y$ is:
\begin{enumerate}
    \item a weak equivalence if for each $n \geq 0$, the induced map $f_n \colon X_n \rightarrow Y_n$ is a weak equivalence in $\SSets^{C_{n+1}}$;
    
    \item a cofibration if for each $n \geq 0$, the relative latching map
    \[ X_n \cup_{L_nX} L_nY \rightarrow Y_n \]
    is a cofibration in $\SSets^{C_{n+1}}$; and 
    
    \item a fibration if for each $n \geq 0$, the relative matching map
    \[ X_n \rightarrow M_nX \times_{M_nY} Y_n \]
    is a fibration on $\SSets^{C_{n+1}}$.
\end{enumerate}
\end{theorem}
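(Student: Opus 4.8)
The plan is to invoke the general theorem of Berger and Moerdijk \cite[1.6]{bm}, which establishes a generalized Reedy model structure on $\mathcal{M}^{\mathcal{C}}$ for any cofibrantly generated model category $\mathcal{M}$ and any small generalized Reedy category $\mathcal{C}$. We take $\mathcal{M} = \SSets$, which is a cofibrantly generated model category by \cite{quillen}, and $\mathcal{C} = \Lambda$, equipped with the generalized Reedy structure described above: degree function $\langle n \rangle \mapsto n$, with $\Lambda^+$ the wide subcategory of morphisms preserving or raising degree and $\Lambda^-$ the wide subcategory of morphisms preserving or lowering degree. The only genuine work is therefore to verify that this data satisfies the four axioms of a generalized Reedy structure, after which the statement of the theorem is exactly the specialization of \cite[1.6]{bm}, modulo the identification of $\Lambda$ with $\Lambda^{\op}$ already noted in the excerpt.

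\textbf{Verifying the generalized Reedy axioms.} First I would check axiom (2), that $\Lambda^+ \cap \Lambda^- = \Iso(\Lambda)$: a morphism in the intersection neither raises nor lowers degree, hence has domain and codomain of equal degree, and one checks using the canonical factorization (Proposition \ref{prop:Lambda_CF}) that a degree-preserving morphism $\langle n \rangle \to \langle n \rangle$ is of the form $\iota(f) \circ \psi$ with $f \colon [n] \to [n]$ in $\Delta$ and $\psi$ an automorphism; since $f$ must then be a bijection, it is the identity, so the morphism is an automorphism. Axiom (1) is then immediate from the definitions of $\Lambda^+$ and $\Lambda^-$: noninvertible morphisms in $\Lambda^+$ strictly raise degree (a degree-preserving morphism in $\Lambda^+$ lies in the intersection, hence is invertible by axiom (2)), and dually for $\Lambda^-$. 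For axiom (3), the required factorization of an arbitrary morphism $\varphi \colon \langle n \rangle \to \langle m \rangle$ as $gh$ with $g \in \Lambda^+$ and $h \in \Lambda^-$, together with its uniqueness up to isomorphism, is precisely the epi-mono style factorization in $\Delta$ transported through the canonical factorization of Proposition \ref{prop:Lambda_CF}: writing $\varphi = \iota(f) \circ \psi$ and factoring $f = \delta \sigma$ in $\Delta$ with $\sigma$ surjective (hence degree-lowering) and $\delta$ injective (hence degree-raising), one sets $h = \iota(\sigma)\circ \psi \in \Lambda^-$ and $g = \iota(\delta) \in \Lambda^+$; uniqueness up to isomorphism follows from the uniqueness of the surjection-injection factorization in $\Delta$ together with the uniqueness in Proposition \ref{prop:Lambda_CF}.

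\textbf{The main obstacle.} The axiom requiring genuine care is axiom (4): if $f$ is a morphism of $\Lambda^-$ and $f\theta = f$ for an isomorphism $\theta$, then $\theta = \id$. This is where the specific combinatorics of $\Lambda$ enters, since it is exactly the condition that fails for, say, the full symmetric crossed simplicial group and must be checked by hand. I would argue as follows: a morphism $f$ in $\Lambda^-$ is, via the canonical factorization, of the form $\iota(\sigma) \circ \psi$ with $\sigma$ a surjection in $\Delta$ and $\psi \in \Aut_\Lambda(\langle n \rangle)$; precomposing with $\theta \in \Aut_\Lambda(\langle n \rangle)$ and using uniqueness of the canonical factorization, $f\theta = f$ forces $\psi\theta = \psi$, hence $\theta = \id$. (Strictly, one should note that $\iota(\sigma)\circ(\psi\theta)$ is already in canonical form since $\psi\theta$ is an automorphism, so uniqueness applies directly.) Berger and Moerdijk verify in \cite[2.7]{bm} that every crossed simplicial group carries such a structure, so this can alternatively be cited wholesale; the value of the direct check is that it makes the latching and matching categories $\Lambda^+(n)$, $\Lambda^-(n)$ and their $C_{n+1}$-actions — described explicitly above — transparent, which is what is needed for the statements about relative latching and matching maps in conditions (2) and (3) of the theorem. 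With all four axioms in hand, \cite[1.6]{bm} applies verbatim and yields the asserted model structure on $\SSets^{\Lambda}$.
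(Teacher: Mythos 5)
Your overall strategy is exactly the paper's: the paper offers no proof of this theorem beyond declaring it to be the specialization of \cite[1.6]{bm} to $\Lambda$, with the generalized Reedy structure on $\Lambda$ taken from \cite[2.7]{bm}. Your proposal does the same but additionally sketches the verification of the four axioms, and your treatment of axiom (4) via the uniqueness of the canonical factorization (Proposition \ref{prop:Lambda_CF}) is precisely the point that makes crossed simplicial groups generalized Reedy, so that part is a genuine improvement in explicitness over the paper.

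There is, however, one step in your verification that fails as written. In checking axiom (2) you assert that a degree-preserving morphism $\langle n\rangle \to \langle n\rangle$ has canonical factorization $\iota(f)\circ\psi$ with $f\colon [n]\to[n]$ "a bijection, hence the identity." A monotone endomorphism of $[n]$ need not be a bijection: for instance $\iota(d_1\circ s_0)\colon \langle 1\rangle\to\langle 1\rangle$ has $\Delta$-part the constant map at $0$, preserves degree, and is not invertible. With the degree-based description of $\Lambda^{\pm}$ that you (following the paper's informal phrasing) take literally, this morphism lies in $\Lambda^+\cap\Lambda^-$ but not in $\Iso(\Lambda)$, so axiom (2) is simply false for those subcategories, and your deduction of axiom (1) from axiom (2) collapses with it. The repair is the definition actually used in \cite[2.7]{bm}: $\Lambda^+$ consists of the morphisms whose canonical factorization $\iota(f)\circ\psi$ has $f$ injective, and $\Lambda^-$ of those with $f$ surjective. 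With that definition the intersection consists of morphisms with bijective, hence identity, $\Delta$-part, axiom (1) follows because a non-bijective injective (resp.\ surjective) monotone map strictly raises (resp.\ lowers) degree, and the rest of your argument --- the factorization in axiom (3) and the cancellation argument for axiom (4) --- goes through verbatim.
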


Note that, following Berger and Moerdijk's result, we can replace the category $\SSets$ with any $\mathcal R$-\emph{projective model category}, or model category $\mathcal E$ such that for each $n \geq 0$, the category $\mathcal E^{C_n}$ admits the projective model structure.

Further observe that, unlike for simplicial spaces, the generalized Reedy structure on $\SSets^\Lambda$ is not the same as the injective model structure.  The cofibrations described above are injective cofibrations, but not conversely.  Thus, we have three distinct model structures on $\SSets^\Lambda$ with levelwise fibrations: injective, generalized Reedy, and projective.

Let us now examine the generalized Reedy structure in more detail.  In the spirit of similar results for simplicial spaces, we want to describe the matching objects more explicitly, enabling us to give a description of the generating cofibrations and generating acyclic cofibrations in this model structure.

Let $X$ be a cyclic space and $n \geq 0$.  By definition, we have 
\[ M_n X = \lim_{\Lambda[n] \to \Lambda[m]} X_m, \]
where the limit is taken over the category $\Lambda^-(n)$.  Using the representability of $\Lambda[m]$ and the duality of the category $\Lambda$, we obtain isomorphisms
\[ \begin{aligned}
M_n X & = \lim_{\Lambda[n] \to \Lambda[m]} \Map(\Lambda[m], X) \\
& = \Map({\Lambda[m] \to \Lambda[n]}{\colim} \Lambda[m], X). 
\end{aligned} \]

\begin{example}
When $n=1$, we get 
\[ \begin{aligned}
    M_1(X) & = \Map(\underset{\Lambda[0] \to \Lambda[1]}{\colim} \Lambda[0], X) \\
    & = \Lambda[0] \amalg \Lambda[0].
\end{aligned} \]
\end{example}

\begin{remark}
The previous example suggests that the colimit taken in the mapping space is some kind of ``boundary" of $\Lambda[m]$, and indeed, it is a colimit of the same representables that form $\partial \Delta[m]$ in the simplicial context.  However, in terms of geometric realization, in which we would like the boundary to consist of lower-dimensional faces, we would want the boundary to be somewhat different.  We invite the reader to explore the difference when $m=2$, for example.
\end{remark}

The previous remark notwithstanding, we use the notation 
\[ \partial \Delta[m]:= \underset{\Lambda[m] \to \Lambda[n]}{\colim} \Lambda[m], \]
where the colimit is taken over the opposite of the category $\Lambda^-{n}$, which is by duality isomorphic to $\Lambda^+(n)$.

Now, let us consider acyclic fibrations in the generalized Reedy model structure, which are those maps of cyclic spaces $X \rightarrow Y$ such that the map $X_n \rightarrow M_nX \times_{M_nY} Y_n$ is a fibration in $\SSets^{C_{n+1}}$.  For simplicity, let $P_n:= M_nX \times_{M_nY} Y_n$, and consider the pullback square defining it:
\[ \xymatrix{P_n \ar[r] \ar[d] & M_n(X) \ar[d] \\
Y_n \ar[r] & M_n(Y).} \]
Using the description above, we can write this diagram instead as
\[ \xymatrix{P_n \ar[r] \ar[d] & \Map(\partial \Lambda[n], X) \ar[d] \\
\Map(\Lambda[n], Y) \ar[r] & \Map(\partial \Lambda[n], Y). } \]

Now, the map $X_n \rightarrow P_n$ is an acyclic fibration in $\SSets^{C_{n+1}}$ if and only if it has the right lifting property with respect to the maps $\partial \Delta[m] \times C_{n+1} \rightarrow \Delta[m] \times C_{n+1}$ for any $m \geq 0$.  Here, we are regarding a $C_{n+1}$-space as a functor $Z \colon C_{n+1} \rightarrow \SSets$, where $C_{n+1}$ is treated as a category with one object.  Having acyclic fibrations defined levelwise means that the evaluation at the single object of $C_{n+1}$ gives an acyclic fibration of simplicial sets.

Thus, a map of cyclic spaces $X \rightarrow Y$ is an acyclic fibration if and only if a lift exists in any diagram of the form
\[ \xymatrix{\partial \Delta[m] \times C_{n+1} \ar[r] \ar[d] & \Map(\Lambda[n], X) \ar[d] & \\
\Delta[m] \times C_{n+1} \ar@{-->}[ur] \ar[r] & P_n \ar[r] \ar[d] & \Map(\partial \Lambda[n], X) \ar[d] \\
& \Map(\Lambda[n], Y) \ar[r] & \Map(\partial \Lambda[n],Y). } \]
Applying the adjunction between mapping spaces and products, such a lift exists if and only if a lift exists in any diagram of the form
\[ \xymatrix{\left( \partial \Delta[m] \times C_{n+1} \times \Lambda[n] \right) \cup \left( \Delta[m] \times C_{n+1} \times \partial \Lambda[n]\right) \ar[r] \ar[d] & X \ar[d] \\
\Delta[m] \times C_{n+1} \times \Lambda[n] \ar[r] \ar@{-->}[ur] & Y} \]
in the category of cyclic spaces.  The vertical maps on the left-hand side thus provide a set of generating cofibrations for the generalized Reedy structure, where we have $m,n \geq 0$.

Using an analogous argument, we can take the set of maps 
\[ \left(V[m,k] \times C_{n+1} \times \Lambda[n] \right) \cup \left(\Delta[m] \times C_{n+1} \times \Lambda[n] \right) \rightarrow \Delta[m] \times C_{n+1} \times \Lambda[n] \]
as a set of generating acyclic cofibrations, where $V[m,k]$ denotes the simplicial $k$-horn, $m \geq 0$, $0 \leq k \leq m$, and $n \geq 0$.

%\wsnoteil{Additions start here}

The sets of generating cofibrations and generating acyclic cofibrations we have described both take the form of pushout-products 
\[ \begin{tikzcd}
    A\times D\cup B\times C \arrow[r] & B\times D,
\end{tikzcd} \]
where $A\to B$ is a morphism of simplicial sets, and $C\to D$ is a morphism in $\SSets^{\Lambda}$. Standard arguments for pushout products, such as can be found in 
\cite[9.3.4]{hirsch} %\cite[\S 7]{joyaltierney} \jbnote{ref not defined; also not sure this is my favorite source for this}\wsnote{I'll look for another reference, then. In the meantime, I've added JT to the references.}   
can be used to show that, for any cofibration $A\to B$ of simplicial sets and any Reedy cofibration $C\to D$ in $\SSets^{\Lambda}$, the corresponding pushout-product is a cofibration in $\SSets^{\Lambda}$ and moreover is an acyclic cofibration if either $A\to B$ or $C\to D$ is a acyclic cofibration. In particular, $\SSets^\Lambda$ has the structure of a simplicial model category. 

We summarize the preceding discussion in the following proposition.

\begin{prop}
	The generalized Reedy model structure on $\SSets^{\Lambda}$ is a cofibrantly generated simplicial model category with set of generating cofibrations 
	\[
	\{
		{\partial \Delta[m] \times C_{n+1}\times \Lambda[n]\cup \Delta[m] \times C_{n+1}\times \partial\Lambda[n]}\to {\Delta[m]\times C_{n+1}\times C_{n+1}\times \Lambda[n]}\}
	\]
	and set of generating trivial cofibrations
	\[
	\{\left(V[m,k] \times C_{n+1} \times \Lambda[n] \right) \cup \left(\Delta[m] \times C_{n+1} \times \Lambda[n] \right) \rightarrow \Delta[m] \times C_{n+1} \times \Lambda[n]\}.
	\]
\end{prop}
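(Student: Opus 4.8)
The plan is to assemble the pieces developed in the discussion immediately preceding the proposition; no genuinely new argument is needed. The existence of the generalized Reedy model structure on $\SSets^\Lambda$ is already guaranteed by the specialization of \cite[1.6]{bm} recorded above, so what remains is to check that it is cofibrantly generated by the two displayed sets—call them $I$ and $J$—and that it satisfies the simplicial model category axioms.

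For cofibrant generation I would apply a recognition theorem such as \cite[11.3.1]{hirsch}. The category $\SSets^\Lambda$ is locally presentable, hence bicomplete with the requisite smallness for the small object argument, and its weak equivalences—being detected levelwise, and levelwise by representables—are closed under retracts and satisfy two-out-of-three. The crucial lifting characterizations were worked out above: unwinding the relative matching map and applying the product/mapping-space adjunction shows that $X\to Y$ is an acyclic fibration in the generalized Reedy structure precisely when it has the right lifting property against $I$, and the same manipulation with the horn $V[m,k]$ in place of $\partial\Delta[m]$ shows that fibrations are exactly the maps with the right lifting property against $J$. From these two facts it follows formally that the $I$-injectives are exactly the $J$-injectives that are weak equivalences, and that every $J$-cofibration is an $I$-cofibration and a weak equivalence—the latter because each generator in $J$ is levelwise the external product of a horn inclusion (an acyclic cofibration of simplicial sets) with a cofibrant $C_{n+1}$-simplicial set, hence an acyclic cofibration in $\SSets^{C_{n+1}}$, and the relative latching maps of such a morphism are again acyclic cofibrations there. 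This is precisely what the recognition theorem requires.

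For the simplicial structure I would use the pushout-product calculus. The category $\SSets^\Lambda$ is tensored, cotensored, and enriched over $\SSets$ levelwise, so it suffices to verify SM7. Both $I$ and $J$ consist of pushout-products $A\times D\cup B\times C\to B\times D$ with $A\to B$ a (possibly acyclic) cofibration of simplicial sets and $C\to D$ a generating (possibly acyclic) generalized Reedy cofibration. By \cite[9.3.4]{hirsch}, together with the levelwise characterization of generalized Reedy (acyclic) cofibrations in terms of $C_{n+1}$-equivariant (acyclic) cofibrations and the pushout-product axiom already known for $\SSets^{C_{n+1}}$, the pushout-product of any cofibration of simplicial sets with any generalized Reedy cofibration of cyclic spaces is again a generalized Reedy cofibration, and is acyclic if either factor is. This is SM7, and completes the verification.

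The step I expect to require the most care is the bookkeeping in the relative latching maps for the pushout-product claim: one has to confirm that forming the external product with a simplicial set commutes appropriately with the latching objects $L_n(-)=\colim_{\Lambda^+(n)}(-)$ and respects the $C_{n+1}=\Aut\langle n\rangle$-action, so that a morphism which is levelwise an (acyclic) cofibration of $C_{n+1}$-simplicial sets is genuinely a generalized Reedy (acyclic) cofibration. Once this compatibility is in hand, everything else is a formal consequence of the facts established earlier in the section.
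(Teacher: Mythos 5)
Your proposal is correct and follows essentially the same route as the paper: the paper proves this proposition by exactly the preceding discussion (unwinding the relative matching maps via the mapping-space/product adjunction to characterize the (acyclic) fibrations by lifting against the displayed pushout-products, and then citing the standard pushout-product argument of \cite[9.3.4]{hirsch} for the simplicial structure), with the existence of the model structure already supplied by \cite[1.6]{bm}. Your explicit invocation of the recognition theorem and your flagged check on the compatibility of latching objects with external products are reasonable elaborations of steps the paper leaves implicit, not a departure from its argument.
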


We conclude our discussion of the generalized Reedy model structure In analogy with the case of usual Reedy categories, the generalized Reedy model structure on $\SSets^{\Lambda}$ sits between the projective and injective model structures. 

\begin{prop}\label{prop:Reedy_fib_is_fib_on_matching}
    Let $f \colon X\to Y$ be a Reedy fibration between cyclic simplicial sets. Then $M_nX\to M_nY$ is a fibration for every $n$.
\end{prop}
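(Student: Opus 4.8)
The plan is to proceed by induction on the degree $n$, mirroring the classical argument that Reedy fibrations induce fibrations on matching objects, but carrying the $C_{n+1}$-equivariance along. First I would recall that, by definition, $f$ being a Reedy fibration means that for every $n \geq 0$ the relative matching map $X_n \to M_n X \times_{M_n Y} Y_n$ is a fibration in $\SSets^{C_{n+1}}$, and in particular a fibration of underlying simplicial sets. The goal is to deduce that $M_n X \to M_n Y$ is a fibration of simplicial sets (one should check whether the intended statement is equivariant or just underlying; the matching object $M_n X$ carries a $C_{n+1}$-action, but the most natural and most useful statement, and the one that feeds into comparing the generalized Reedy structure with the injective and projective ones, is the underlying one, which I will prove—the equivariant refinement follows by the same bookkeeping if desired).

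The key structural input is that the matching object $M_n X = \lim_{\Lambda^-(n)} X_s$ is itself assembled, via the duality $\Lambda \cong \Lambda^{\op}$, from the cyclic spaces $X_m$ for $m < n$, and that the diagram $\Lambda^-(n)$ has an initial-segment/skeletal filtration compatible with degree. Concretely, I would express $M_n X = \Map_{\SSets^\Lambda}(\partial\Lambda[n], X)$ using the notation introduced just before the statement, where $\partial\Lambda[n] = \colim_{\Lambda^+(n)} \Lambda[m]$ is built out of the representables $\Lambda[m]$ with $m < n$. The inductive hypothesis supplies that $M_m X \to M_m Y$ is a fibration for all $m < n$; combined with the hypothesis that each $X_m \to M_m X \times_{M_m Y} Y_m$ is a fibration, a standard pullback-and-limit argument shows that applying $\Map(-, X) \to \Map(-, Y)$ to the cells $\Lambda[m] \hookrightarrow \Lambda[n]$ building $\partial\Lambda[n]$ produces a fibration: each stage of the skeletal filtration of $\partial\Lambda[n]$ is obtained by attaching copies of $\Lambda[m]$ along $\partial\Lambda[m]$, and $\Map(-,X)$ turns such pushouts into pullbacks, with the relevant maps being fibrations by induction and by the relative matching hypothesis. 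Taking the (finite, transfinite composition being unnecessary since degrees are bounded below $n$) limit over this filtration, $M_n X \to M_n Y$ is a composite of pullbacks of fibrations, hence a fibration.

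The main obstacle I anticipate is the careful identification of the diagram category $\Lambda^-(n)$ and its filtration: unlike in the ordinary Reedy case, the automorphism group $C_{n+1}$ acts on everything, so when decomposing $\partial\Lambda[n]$ into cells one must track the free $C_{m+1}$- or $C_{n+1}$-orbits of non-degenerate simplices, and verify that the attaching maps respect the equivariant structure so that the pushout-product and pullback arguments take place in $\SSets^{C_{n+1}}$ (or in $\SSets$ after forgetting, which is what we need). The second delicate point is confirming that $\Map(-,X)$ sends the relevant colimits defining $\partial\Lambda[n]$ to the limits defining $M_n X$ in a way compatible with the pullback $M_n X \times_{M_n Y} Y_n$ — this is where the representability of $\Lambda[m]$ and the Yoneda identification $\Map(\Lambda[m], X) \cong X_m$ from the earlier discussion of $c\Set$ are used — and that no subtlety from the non-trivial automorphisms invalidates the formula $M_n X = \Map(\partial\Lambda[n], X)$ stated before the proposition. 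Once these identifications are in hand, the induction is routine.
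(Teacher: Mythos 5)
Your proposal is correct and follows essentially the same strategy as the paper: both arguments filter by degree and reduce everything to the relative matching maps of $f$ at levels below $n$. The paper packages this dually, as an induction over a filtration of the matching category $\Lambda^{-}(n)$ in which one extends cones one $C_{n+1}$-orbit at a time by solving lifting problems against the relative matching maps $X_{k+1}\to M_{k+1}X\times_{M_{k+1}Y}Y_{k+1}$; this is the same induction as your skeletal filtration of $\partial\Lambda[n]$ combined with closure of fibrations under pullback and composition.
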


Note that, since $\SSets^{C_{n+1}}$ is equipped with the projective model structure, it does not matter whether we take this matching map to be a fibration in $\SSets$ or a fibration in $\SSets^{C_{n+1}}$. 

\begin{proof}
    Following \cite[Lem. 15.3.9]{hirsch}, we define a filtration of the category $\Lambda^{-}(n)$ by declaring $F^k\Lambda^{-}(n)$ to consist of only those $\langle n\rangle \to \langle \ell\rangle$ such that $\ell \leq k$. Taking limits over this filtration gives rise to a sequence of $C_{n+1}$-spaces 
    \[ \begin{tikzcd}
    M_nX \arrow[r]\arrow[d]  &(M_nX)^{\leq n-2} \arrow[r]\arrow[d] & \cdots \arrow[r] & (M_nX)^{\leq 0}\arrow[d] \\
    M_nY \arrow[r]  &(M_nY)^{\leq n-2} \arrow[r] & \cdots \arrow[r] & (M_nY)^{\leq 0},
    \end{tikzcd} \]
    where the $C_{n+1}$-action is given by composing with the cyclic shifts of $\langle n\rangle$. 
	
    Given a ${C_{n+1}}$-cofibration $\iota \colon A\to B$ and a lifting problem 
    \[ \begin{tikzcd}
		A \arrow[r]\arrow[d,"\iota"'] & M_nX\arrow[d,"M_nf"] \\
		B\arrow[r]\arrow[ur,dashed] & M_nY,
    \end{tikzcd} \] 
    we want to construct a lift inductively. First note that the category $F^0\Lambda^{-}(n)$ is precisely the discrete category whose objects are $\langle n\rangle \to \langle 0\rangle$. Consequently, the corresponding map $(M_nX)^{\leq 0}\to (M_nY)^{\leq 0}$ is simply 
    \[ X_0^{n+1}\to Y_0^{n+1} \]
    with $C_{n+1}$-action given by permuting the factors. Since $M_0X\cong \ast\cong M_0Y$ is the terminal object, the requirement that $f$ be a Reedy fibration implies that $X_0\to Y_0$ is a fibration. Thus, $(M_nX)^{\leq 0}\to (M_nY)^{\leq 0}$ is a fibration, and so we obtain a $C_{n+1}$-map $g_0:B\to (M_nX)^{\leq 0}$ lifting the corresponding square. 
	
    Suppose now that we have defined $g_k \colon B \to (M_nX)^{\leq k}$ lifting the corresponding square. This map corresponds to a cone over the restricted diagram 
    \[ \begin{tikzcd}
    X \colon  &[-3em] F^k \Lambda^{-}(n)\arrow[r] & \SSets,
    \end{tikzcd} \] 
    and our aim is now to extend this cone to $F^{k+1}\Lambda^{-}(n)$. We denote the component of the cone at $\varphi \colon \langle n\rangle \to \langle \ell \rangle$ by $\mu_\varphi$. 
	
    Let $\psi \colon \langle n\rangle \to \langle k+1\rangle$ be an object of $F^{k+1}\Lambda^{-}(n)$ on which $\mu$ is not defined. We wish to define a corresponding component $\mu_{\psi} \colon B\to X_{k+1}$. Composition with $\psi$ defines a functor 
    \[ \begin{tikzcd}
    I_\psi \colon &[-3em]\Lambda^{-}(k+1)\arrow[r] & F^{k}\Lambda^{-}(n) 
    \end{tikzcd} \]
    and so induces a commutative diagram
    \[ \begin{tikzcd}
    (M_nX)^{\leq k}\arrow[r]\arrow[d] &  M_{k+1}X\arrow[d]\\
    (M_nY)^{\leq k}\arrow[r] &  M_{k+1}Y.
    \end{tikzcd} \]
    A morphism $\mu_\psi \colon B\to X_{k+1}$ compatible with the existing cone is equivalently a lift of the induced diagram 
    \[ \begin{tikzcd}
		A \arrow[r]\arrow[d,"\iota"'] & X_{k+1}\arrow[d]\\
		B\arrow[r]\arrow[ur,dashed,"\mu_\psi"] & M_{k+1}X\times_{M_{k+1}Y}Y_{k+1}
    \end{tikzcd} \] 
    which exists since $f$ is a Reedy fibration. 
	
    To extend the cone to all of $F^{k+1}\Lambda^{-}(n)$, we define $\mu_{\psi\circ \tau^\ell}$, where $\tau$ is the cyclic shift automorphism of $\langle n\rangle$, to be the composite of $\mu_\psi$ with the corresponding cyclic shift of $B$. We perform this procedure for each orbit of morphisms $\langle n\rangle \to \langle k+1\rangle$ under the $C_{n+1}$-action, thereby obtaining an extension of the cone to $F^{k+1}\Lambda^{-}(n)$.
	
    By induction, we thus obtain a lift of the original diagram 
    \[ \begin{tikzcd}
		A \arrow[r]\arrow[d,"\iota"'] & M_nX\arrow[d,"M_nf"] \\
		B\arrow[r]\arrow[ur,dashed,"\mu"] & M_nY
    \end{tikzcd} \]
    and so $M_nX\to M_nY$ is a fibration. 
\end{proof}

\begin{prop} \label{prop:inj_proj_Reedy}
    The identity functors  
    \[ \begin{tikzcd}
		\left(\SSets^{\Lambda}\right)^{\on{proj}} \arrow[r,"\on{id}"] &  \left(\SSets^{\Lambda}\right)^{\on{Reedy}}\arrow[r,"\on{id}"] & \left(\SSets^{\Lambda}\right)^{\on{inj}}
    \end{tikzcd} \]
    define left Quillen equivalences.
\end{prop}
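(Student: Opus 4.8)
The plan is to leverage the fact that all three model structures on $\SSets^{\Lambda}$ have \emph{the same} class of weak equivalences, namely the maps that are levelwise weak equivalences of simplicial sets (for the generalized Reedy structure this is because the projective model structure on $\SSets^{C_{n+1}}$ has the underlying weak equivalences of simplicial sets). Once this is noted, the proof reduces to two purely formal observations together with two inclusions of classes of cofibrations. The formal part: a Quillen adjunction in which both adjoints are the identity on a category with two model structures sharing the same weak equivalences is automatically a Quillen equivalence, since for cofibrant $X$ and fibrant $Y$ a map $X\to Y$ is a weak equivalence in one structure if and only if it is one in the other. Moreover, since the identity preserves weak equivalences, to see that $\id\colon(\SSets^{\Lambda})^{\proj}\to(\SSets^{\Lambda})^{\Reedy}$ and $\id\colon(\SSets^{\Lambda})^{\Reedy}\to(\SSets^{\Lambda})^{\inj}$ are left Quillen functors it suffices to check that they preserve cofibrations; preservation of acyclic cofibrations then comes for free. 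So the whole statement comes down to: (a) every projective cofibration is a generalized Reedy cofibration, and (b) every generalized Reedy cofibration is an injective cofibration.

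For (b), I would argue that a Reedy cofibration $f\colon X\to Y$ is a levelwise monomorphism. Running the latching-object induction dual to the proof of Proposition~\ref{prop:Reedy_fib_is_fib_on_matching} — filtering the latching category $\Lambda^+(n)$ by degree and extending over $C_{n+1}$-orbits of morphisms $\langle n\rangle\to\langle k\rangle$ — one shows that if $f$ is a Reedy cofibration then the induced map $L_nX\to L_nY$ is a cofibration in $\SSets^{C_{n+1}}$ for every $n$. Combining this with the hypothesis that each relative latching map $X_n\cup_{L_nX}L_nY\to Y_n$ is a cofibration in $\SSets^{C_{n+1}}$, the map $f_n\colon X_n\to Y_n$ factors as a cobase change of the $C_{n+1}$-cofibration $L_nX\to L_nY$ followed by the relative latching map, hence is itself a cofibration in $\SSets^{C_{n+1}}$; in particular its underlying map of simplicial sets is a monomorphism. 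Thus $f$ is an injective cofibration.

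For (a), since Reedy cofibrations are closed under cobase change, transfinite composition, and retracts, it is enough to show the generating projective cofibrations are Reedy cofibrations. These have the form $\partial\Delta[m]\times\Lambda[n]\to\Delta[m]\times\Lambda[n]$, which is exactly the pushout-product of the simplicial-set cofibration $\partial\Delta[m]\to\Delta[m]$ with the map $\emptyset\to\Lambda[n]$. As established earlier, the generalized Reedy structure on $\SSets^{\Lambda}$ is simplicial, so the pushout-product of a Reedy cofibration with a cofibration of simplicial sets is a Reedy cofibration; it therefore suffices to prove that $\Lambda[n]$ is Reedy cofibrant, i.e.\ that each latching map $L_k\Lambda[n]\to\Lambda[n]_k$ is a cofibration in $\SSets^{C_{k+1}}$. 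Here I would invoke the canonical factorization, Proposition~\ref{prop:Lambda_CF}: it identifies $\Lambda[n]_k=\Hom_{\Lambda}(\langle k\rangle,\langle n\rangle)$ with $\Hom_{\Delta}([k],[n])\times\Aut_{\Lambda}(\langle k\rangle)$, and the $C_{k+1}=\Aut_{\Lambda}(\langle k\rangle)$-action (by precomposition) is right multiplication on the second factor, hence free. So $\Lambda[n]_k$ is a free discrete $C_{k+1}$-set; since the latching map is $C_{k+1}$-equivariant, its image $L_k\Lambda[n]$ is a union of (free) orbits, as is the complement, and therefore the latching map is a cofibration of $C_{k+1}$-sets, hence a cofibration in $\SSets^{C_{k+1}}$.

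The step I expect to be the real obstacle is the Reedy cofibrancy of the representables $\Lambda[n]$ — specifically, verifying that the latching map $L_k\Lambda[n]\to\Lambda[n]_k$ not only is a monomorphism but has a \emph{free} $C_{k+1}$-set as its complement, which is precisely what is needed for it to be a cofibration in the projective model structure on $\SSets^{C_{k+1}}$. This is the one place where the nontrivial automorphisms of $\Lambda$ genuinely intervene, and it is handled by the canonical factorization. Everything else is either formal (the reduction to ``left Quillen implies left Quillen equivalence'', using only that the three structures have the same weak equivalences) or a standard latching-object induction carried out in the $C_{n+1}$-equivariant setting exactly as in the proof of Proposition~\ref{prop:Reedy_fib_is_fib_on_matching}; and the composite $(\SSets^{\Lambda})^{\proj}\to(\SSets^{\Lambda})^{\inj}$ is then a left Quillen equivalence because composites of such are.
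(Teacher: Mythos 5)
Your argument is correct, but it takes a genuinely different route from the paper for half of the statement. The formal reduction (shared weak equivalences, so it suffices to show the identities preserve cofibrations, and any such Quillen adjunction is automatically a Quillen equivalence) is the same. For the comparison $\left(\SSets^{\Lambda}\right)^{\Reedy}\to\left(\SSets^{\Lambda}\right)^{\on{inj}}$ the paper simply observes that each generating Reedy cofibration is levelwise a pushout-product of cofibrations of simplicial sets, hence an injective cofibration; you instead prove the stronger general fact that every Reedy cofibration is a levelwise cofibration via the latching filtration, which works but does more than is needed. The real divergence is in the comparison $\left(\SSets^{\Lambda}\right)^{\proj}\to\left(\SSets^{\Lambda}\right)^{\Reedy}$: the paper argues on the right-adjoint side, using Proposition \ref{prop:Reedy_fib_is_fib_on_matching} to see that a Reedy fibration $f$ factors levelwise as $X_n\to M_nX\times_{M_nY}Y_n\to Y_n$ with both maps fibrations, so Reedy fibrations are projective fibrations; you argue on the left-adjoint side, reducing to the generating projective cofibrations $\partial\Delta[m]\times\Lambda[n]\to\Delta[m]\times\Lambda[n]$ and hence to the Reedy cofibrancy of the representables $\Lambda[n]$, which you establish via the canonical factorization and the freeness of the $C_{k+1}$-action. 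Your route is valid and the freeness observation is a nice point, but it carries a real extra burden that the paper's proof avoids entirely: you must also know that the latching map $L_k\Lambda[n]\to\Lambda[n]_k$ is a monomorphism (an Eilenberg--Zilber-type statement for $\Lambda$, essentially Berger--Moerdijk's verification that crossed simplicial groups are EZ-categories), which you flag but do not prove. The matching-object argument sidesteps this entirely by reusing a result already established in the paper, which is why it is the more economical choice here.
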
 

\begin{proof}
    Since the classes of weak equivalences in all three model structures agree, it suffices to show that the identity functors above preserve cofibrations, or that their inverse identity functors preserve fibrations.
	
    First, consider a generating Reedy cofibration
    \[ \begin{tikzcd}
		{\partial \Delta[m] \times C_{n+1}\times \Lambda[n]\cup \Delta[m] \times C_{n+1}\times \partial\Lambda[n]}\arrow[r] & {\Delta[m]\times C_{n+1}\times \Lambda[n]}
    \end{tikzcd} \]
    and notice that the component of this transformation at an objct $\langle k\rangle$ of $\Lambda$ is a pushout-product of cofibrations of simplicial sets. Consequently, the generating Reedy cofibrations are, in particular, injective cofibrations, and thus the right-hand adjunction is a Quillen functor. 
	
    For the left-hand adjunction, suppose that $f \colon X\to Y$ is a Reedy fibration. We can factor $f_n$ as  
    \[ \begin{tikzcd}
	X_n \arrow[r] & M_nX\times_{M_nY}Y \arrow[r] & Y_n. 
    \end{tikzcd} \]
    Since $f$ is a Reedy fibration, the first of these maps is a fibration, and by Proposition \ref{prop:Reedy_fib_is_fib_on_matching}, the second is also. Thus, the map $f_n \colon X_n\to Y_n$ is always a fibration of simplicial sets, and so the identity functor 
    \[ \begin{tikzcd}
		(\SSets^\Lambda)^{\on{Reedy}} \arrow[r,"\on{id}"]& (\SSets^{\Lambda})^{\on{proj}}
    \end{tikzcd} \]
    is right Quillen, completing the proof. 
\end{proof}

As an immediate corollary, we obtain one final property of the generalized Reedy model structure.

\begin{cor}
    The projective and generalized Reedy model structures on $\SSets^\Lambda$ are left proper. 
\end{cor}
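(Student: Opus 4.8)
The plan is to obtain left properness essentially for free, by transferring it from the injective model structure (where it is manifest) using the comparison of cofibrations recorded in Proposition~\ref{prop:inj_proj_Reedy}. The underlying principle is that left properness depends only on the classes of weak equivalences and cofibrations together with the ambient colimits: if two model structures live on the same cocomplete category, have the same weak equivalences, and the cofibrations of the first are contained in the cofibrations of the second, then left properness of the second implies left properness of the first. Indeed, a pushout square witnessing left properness in the first structure -- a weak equivalence pushed out along a cofibration -- is literally the same square in the second structure, where one leg remains a weak equivalence and the other is now a cofibration, so the conclusion (the opposite leg is a weak equivalence) transfers back verbatim.

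The first step is to check that $(\SSets^{\Lambda})^{\inj}$ is left proper. Its cofibrations are the levelwise monomorphisms of simplicial sets, so the map $\varnothing \to X$ is an injective cofibration for every cyclic space $X$, since every simplicial set is cofibrant in the Quillen structure on $\SSets$. Hence every object of $(\SSets^{\Lambda})^{\inj}$ is cofibrant, and any model category with this property is left proper, as the pushout of a weak equivalence between cofibrant objects along a cofibration is a weak equivalence (see, e.g., \cite[13.1.2]{hirsch}).

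Next I would invoke Proposition~\ref{prop:inj_proj_Reedy}: the identity functors exhibit every projective cofibration and every generalized Reedy cofibration as an injective cofibration, and, as noted in its proof, all three model structures share the same weak equivalences, namely the levelwise weak equivalences of simplicial sets. Applying the transfer principle of the first paragraph -- once to $(\SSets^{\Lambda})^{\proj}$ and once to $(\SSets^{\Lambda})^{\Reedy}$ against $(\SSets^{\Lambda})^{\inj}$ -- then gives that both are left proper. There is no genuine obstacle here; the only point requiring care is bookkeeping, namely that pushouts in all three structures are computed in the common underlying category $\SSets^{\Lambda}$, so that a single square can legitimately be reinterpreted across the three model structures.
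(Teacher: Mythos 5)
Your proof is correct and follows essentially the same route as the paper: establish left properness of the injective structure, then transfer it to the projective and generalized Reedy structures using that all three share the same weak equivalences and that their cofibrations are injective cofibrations (Proposition \ref{prop:inj_proj_Reedy}). The only minor difference is that you deduce left properness of the injective structure from every object being cofibrant, whereas the paper inherits it levelwise from left properness of $\SSets$; both are valid.
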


\begin{proof}
    The model structure on $\SSets$ is left proper, and so the injective model structure is as well. Since the injective model structure is left proper, and the classes of weak equivalences in all three model structures in Proposition \ref{prop:inj_proj_Reedy} are the same, it follows from the fact that the cofibrations in the projective and generalized Reedy model structures are injective cofibration that these model structures are left proper.
\end{proof}

\section{Model structures for cyclic Segal spaces} \label{Segalmcs}

We now wish to localize the model structures in the previous section with respect to an analogue of the maps used to obtain the Segal space model structure on simplicial spaces.   We begin by reviewing the techniques of localization of model categories, then proceed to  use them to develop our desired model structure.

\subsection{Local objects and localization}

We understand a \emph{localization} of a model category at a collection $\mathcal{S}$ of morphisms to be a model category with the same underlying category, but with weak equivalences obtained by formally adding $\mathcal{S}$ to the weak equivalences of the original model structure. To have better control of this procedure, we will use the framework of \emph{left Bousfield localization} to produce our desired model structures. 

Loosely speaking, the left Bousfield localization arises by formally keeping the same cofibrations as the original model structure when we add $\mathcal{S}$ to the weak equivalences. There are many ways to formally characterize left Bousfield localizations, but for our purposes the most convenient is via local objects and local equivalences. Since the model structures we wish to consider are simplicially enriched, we will discuss the localization procedure in this setting. 

In any simplicial model structure, given weak equivalence $A\to B$ and a fibrant object $X$, the induced map on derived mapping spaces 
\[ \begin{tikzcd}
    R\!\Map(B,X)\arrow[r] & R\!\Map(A,X) 
\end{tikzcd} \]
is a weak equivalence in the model structure on simplicial sets.  We can define the left Bousfield localization by reversing this observation to obtain candidate classes for the fibrant objects and weak equivalences of our localized model structure. In what follows, let $\mathcal M$ be a simplicial  model category and $S$ the set of morphisms at which we want to localize. 
%let $\mathcal C$, $\mathcal F$, and $\mathcal{W}$ \jbnote{inconsistent font choices} be the cofibrations, fibrations, and weak equivalences respectively of a simplicial model structure on a category $\mathcal{M}$, and let $\mathcal{S}$ be the set of morphisms at which we wish to localize.  

\begin{definition}
    A fibrant object $X$ of $\mathcal{M}$ is \emph{$S$-local} if, for every morphism $A\to B$ in $S$, the induced map on derived mapping spaces 
    \[ \begin{tikzcd}
		R\!\Map(B,X)\arrow[r] & R\!\Map(A,X) 
    \end{tikzcd} \]
    is a weak equivalence. 
	
    A morphism $f \colon C\to D$ in $\mathcal{M}$ is an \emph{$S$-local equivalence} if, for every $S$-local object $Y$, the induced map on derived mapping spaces 
    \[ \begin{tikzcd}
		R\!\Map(C,Y)\arrow[r] & R\!\Map(D,Y) 
    \end{tikzcd} \]
    is a weak equivalence. 
\end{definition}

The idea is that, if the localized model structure exists, then the fibrant objects in the localized model structure must be $S$-local objects. Assuming that the fibrant objects are precisely the $S$-local objects, then the weak equivalences must be the $S$-local equivalences. We thus arrive at the following definition. 

\begin{definition}
    The \emph{left Bousfield localization} of $\mathcal{M}$ at the set $S$, if it exists, is the model structure uniquely determined by the requirements that 
	\begin{itemize}
		\item the cofibrations are precisely the cofibrations in the original model structure $\mathcal M$; and 
  
		\item the weak equivalences are the $\mathcal{V}$-local equivalences. 
	\end{itemize}
\end{definition}

The key result that we need for Bousfield localizations is the following result from \cite[Theorem 4.46]{barwick}.

\begin{theorem} \label{thm:Bousfield_existence}
    Let $\mathcal{M}$ be a left proper, combinatorial, simplicial model category such that the sources of the generating cofibrations and trivial cofibrations are cofibrant. Let $\mathcal{S}$ be a set of morphisms in $\mathcal{M}$. Then the left Bousfield localization $L_S\mathcal{M}$ of $\mathcal{M}$ at $S$ exists and is a left proper, combinatorial, simplicial model category. 
\end{theorem}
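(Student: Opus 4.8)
The plan is to recognize this as an instance of the general existence theory for left Bousfield localizations of combinatorial model categories, and in particular to reduce it to Jeff Smith's recognition theorem; this is exactly the content of \cite[Theorem 4.46]{barwick}, building on Hirschhorn's treatment of the left proper cellular case in \cite{hirsch}. So the real task is to produce the two defining classes of $L_S\mathcal{M}$ together with a \emph{set} of generating trivial cofibrations, and then to inherit left properness and the simplicial structure.

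First I would fix the candidate structure: declare the cofibrations of $L_S\mathcal{M}$ to be those of $\mathcal{M}$ and the weak equivalences to be the $S$-local equivalences. The easy closure properties come first --- $S$-local equivalences satisfy two-out-of-three and are closed under retracts, since they are detected by the homotopy groups of the derived mapping spaces $R\!\Map(-,X)$ into $S$-local objects $X$, and every weak equivalence of $\mathcal{M}$ is an $S$-local equivalence. The hypothesis that $\mathcal{M}$ is left proper with cofibrant sources of generating (trivial) cofibrations is used to guarantee that $R\!\Map(-,X)$ is computed by cofibrant replacement in a way compatible with the small object argument, so that an $S$-local fibrant replacement functor can be built by attaching, via the small object argument, pushouts of the pushout-product horns associated to the maps in $S$ together with the original generating trivial cofibrations.

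The heart of the argument is to show that the class of $S$-local acyclic cofibrations is generated, as a saturated class, by a \emph{set} $J_S$. This is where combinatoriality is essential: $\mathcal{M}$ is locally presentable, the cofibrations are already generated by a set, and one checks that the class of $S$-local equivalences is accessible and accessibly embedded in the arrow category --- concretely, a map is an $S$-local equivalence if and only if it becomes a weak equivalence after the accessible $S$-localization functor described above, and accessible functors reflect such classes. Smith's theorem then assembles the candidate cofibrations and weak equivalences into a combinatorial model structure whose fibrant objects are exactly the $S$-local objects and whose acyclic fibrations coincide with those of $\mathcal{M}$. I expect this accessibility/solution-set step to be the main obstacle: one must verify that the $S$-local equivalences are closed under the appropriate filtered colimits and form an accessible class, which is the technical core of Smith's criterion. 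Once the model structure exists, left properness is inherited because cofibrations and the ambient pushouts are unchanged while $S$-local equivalences are stable under cobase change along cofibrations (a gluing argument using that $R\!\Map(-,X)$ sends such pushouts to homotopy pullbacks), and the simplicial structure is inherited because the pushout-product adjunction is unchanged and interacts correctly with the unchanged cofibrations and the new, smaller class of fibrant objects. In a survey of this scope one would in the end simply cite \cite[Theorem 4.46]{barwick}, but the sketch above is the shape of that argument.
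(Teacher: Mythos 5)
Your proposal is correct and matches the paper's treatment: the paper gives no proof of this statement, simply citing \cite[Theorem 4.46]{barwick}, which is exactly the reduction you identify. Your sketch of the underlying argument (Smith's recognition theorem, accessibility of the class of $S$-local equivalences as the technical core, and inheritance of left properness and the simplicial structure) is an accurate account of how that cited result is established.
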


\subsection{The Segal localization}

Let us first recall the localization used to obtain a model structure for Segal spaces, so that we can modify it appropriately to get a cyclic analogue.

Let $G(n)$ denote the simplicial set given by the colimit 
\[ \Delta[1] \overset{d_0}{\rightarrow} \Delta[0] \overset{d_1}{\leftarrow} \Delta[1] \overset{d_0}{\rightarrow} \cdots \overset{d_1}{\leftarrow} \Delta[1], \]
where there are $n$ copies of $\Delta[1]$.  There is a natural inclusion $G(n) \rightarrow \Delta[n]$; the image is sometimes called the \emph{spine} of $\Delta[n]$.  Regarding this inclusion as a map of discrete simplicial spaces, if we map into a fixed simplicial space $X$, then we get maps
\[ X_n = \Map(\Delta[n], X) \rightarrow \Map(G(n),x) = \underbrace{X_1 \times_{X_0} \cdots \times_{X_0} X_1}, \]
called \emph{Segal maps}.  

A model structure for Segal spaces is obtained by localizing either the injective or the projective model structure on simplicial spaces with respect to the maps $G(n) \rightarrow \Delta[n]$ for $n \geq 2$.  As a result, the fibrant objects are precisely the simplicial spaces $X$ that are fibrant in the underlying  model structure and for which the Segal maps are weak equivalences of simplicial sets for $n \geq 2$.

We want to give a similar localization to obtain a model structure on the category of cyclic spaces in which the fibrant objects are \emph{cyclic Segal spaces}, or cyclic spaces whose underlying simplicial spaces are Segal spaces.

First, let us make the appropriate definitions.  Let $\Gamma(n)$ denote the cyclic set that is the colimit of the diagram
\[ \Lambda[1] \overset{d_0}{\rightarrow} \Lambda[0] \overset{d_1}{\leftarrow} \Lambda[1] \overset{d_0}{\rightarrow} \cdots \overset{d_1}{\leftarrow} \Lambda[1], \]
which has a natural inclusion into the representable cyclic set $\Lambda[n]$.  As before, we treat this map $\Gamma[n] \rightarrow \Lambda[n]$ as a map of discrete cyclic spaces.

We can now localize any one of our model structures on the category of cyclic spaces with respect to the maps $\Gamma(n) \rightarrow \Lambda[n]$ for all $n \geq 2$.  It remains to show that this model structure has the desired fibrant objects, namely, the cyclic Segal spaces that are fibrant in the appropriate original model structure.

We denote the set of morphisms at which we localize by 
\[ \Seg_\Lambda:=\{\Gamma(n)\to \Lambda[n]\}_{n\geq 2}. \]
To identify the fibrant objects in the localized model structure, it suffices to identify the $\Seg_\Lambda$-local fibrant objects. Since the class of fibrant objects in the injective model structure contains the fibrant objects in the other two, we can thus identify the $\Seg_\Lambda$-local fibrant objects in the other two model structures. 

\begin{prop}
    An injectively fibrant object $X$ of $\SSets^{\Lambda}$ is $\Seg_\Lambda$-local if and only if it is a cyclic Segal space. 
\end{prop}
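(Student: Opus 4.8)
The plan is to reduce the statement to the analogous, already-understood fact for simplicial spaces by carefully analyzing the relationship between the cyclic mapping spaces $R\!\Map_{\SSets^\Lambda}(\Lambda[n],X)$ and the simplicial ones. The key observation is that, by the canonical factorization of Proposition~\ref{prop:Lambda_CF}, each representable cyclic set $\Lambda[n]$ decomposes so that its underlying simplicial set $j^\ast\Lambda[n]$ is identified with $S^1 \times |\Delta[n]|$ at the level of realization (Proposition~\ref{prop:cyc_reps_circle_bundles}); more to the point, for a fibrant cyclic space $X$ one has a natural weak equivalence $R\!\Map(\Lambda[n],X) \simeq X_n$, and the same holds with $\Gamma(n)$ in place of $\Lambda[n]$, where by construction $R\!\Map(\Gamma(n),X)$ is the homotopy limit computing the iterated fiber product $X_1\times^h_{X_0}\cdots\times^h_{X_0}X_1$. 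Thus the map $R\!\Map(\Lambda[n],X)\to R\!\Map(\Gamma(n),X)$ is precisely the (homotopy) Segal map of the underlying simplicial space $j^\ast X$.

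First I would set up the computation of $R\!\Map(\Gamma(n),X)$: since $\Gamma(n)$ is, by definition, the colimit of the diagram $\Lambda[1]\rightrightarrows\Lambda[0]\leftleftarrows\cdots$, and since the injective model structure on $\SSets^\Lambda$ is simplicial with all objects cofibrant, $R\!\Map(\Gamma(n),X)$ is the homotopy limit over the opposite diagram of the spaces $R\!\Map(\Lambda[1],X)\simeq X_1$ and $R\!\Map(\Lambda[0],X)\simeq X_0$. I would check that the structure maps are exactly $d_0$ and $d_1$, so that this homotopy limit is the homotopy fiber product $X_1\times^h_{X_0}\cdots\times^h_{X_0}X_1$ ($n$ factors of $X_1$). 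Here I would need that these $d_i$ are computed the same way whether viewed in $\SSets^\Lambda$ or $\SSets^{\Deltaop}$, which follows because $j^\ast$ is a left adjoint sending $\Lambda[k]$ to (something with underlying simplicial set) $j^\ast\Lambda[k]$ and the face maps $d_0,d_1\colon \Lambda[1]\to\Lambda[0]$ restrict from maps already in $\Delta$.

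Next I would observe that $X$ is $\Seg_\Lambda$-local if and only if, for every $n\geq 2$, the map induced by $\Gamma(n)\hookrightarrow\Lambda[n]$ on derived mapping spaces is a weak equivalence; by the identifications above, this is exactly the assertion that the Segal map $X_n \to X_1\times^h_{X_0}\cdots\times^h_{X_0}X_1$ of the underlying simplicial space $j^\ast X$ is a weak equivalence for all $n\geq 2$. Since an injectively fibrant cyclic space has a Reedy-fibrant (hence Rezk-fibrant) underlying simplicial space, this is precisely the condition that $j^\ast X$ is a Segal space, i.e.\ that $X$ is a cyclic Segal space by our definition. One direction is then immediate and the other follows from unwinding the definition of ``cyclic Segal space'' as a cyclic space whose underlying simplicial space is a Segal space.

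The main obstacle is the bookkeeping in the first two steps: one must be careful that the homotopy limit presentation of $R\!\Map(\Gamma(n),X)$ genuinely matches the homotopy fiber product appearing in the classical Segal condition, and that passing through $j^\ast$ does not distort the mapping spaces. The potential subtlety is that $\Lambda[n]$ has nontrivial automorphisms, so a priori $R\!\Map(\Lambda[n],X)$ could differ from $X_n$; but the point is that $\Lambda[n]$, being representable, satisfies $\Map_{\SSets^\Lambda}(\Lambda[n],X)=X_n$ strictly, and $\Lambda[n]$ is cofibrant, so no derived correction is needed on that side. On the $\Gamma(n)$ side the subtlety is genuine and handled by the cofibrancy of the defining pushout diagram together with the simpliciality of the model structure. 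Once these identifications are in place the proof is a direct translation of the simplicial statement recalled just before the proposition.
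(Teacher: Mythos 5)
Your proposal is correct and follows essentially the same route as the paper: use injective cofibrancy of all objects and fibrancy of $X$ to replace derived mapping spaces by strict ones, apply representability of $\Lambda[n]$ and the colimit description of $\Gamma(n)$ to identify the induced map with the Segal map of $j^\ast X$, and conclude. The only cosmetic difference is that you phrase $R\!\Map(\Gamma(n),X)$ as a homotopy fiber product where the paper works directly with the strict limit, which is justified by the same fibrancy considerations you mention.
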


\begin{proof}
    Since the object $X$ is injectively fibrant, and all objects are injectively cofibrant, the derived mapping spaces $R\Map(\Gamma(n),X)$ and $R \Map(\Lambda[n], X)$ are equivalent to the corresponding underived mapping spaces. Consequently, $X$ is $\Seg_\Lambda$-local if and only if, for every $n\geq 2$, the maps of simplicial sets 
    \[ \begin{tikzcd}
		\Map(\Lambda[n],X) \arrow[r] & \Map(\Gamma(n),X) 
    \end{tikzcd} \] 
    are weak equivalences. Applying the representability of $\Lambda[n]$ and the fact that $\Map$ sends colimits in the first variable to limits, we obtain a commutative diagram 
    \[ \begin{tikzcd}
		\Map(\Lambda[n],X)\arrow[d,"\cong"'] \arrow[r] & \Map(\Gamma(n),X)\arrow[d,"\cong"] \\
		X_n \arrow[r] & {X_1\times_{X_0}X_1\times_{X_0}\cdots \times_{X_0} X_1 }.
    \end{tikzcd} \]
    The bottom horizontal map is precisely the Segal map for $X$, and thus, we see that $X$ is $\Seg_\Lambda$-local if and only if $X$ is a cyclic Segal space. 
\end{proof}

Applying Theorem \ref{thm:Bousfield_existence}, we obtain our desired localization.  

\begin{theorem}
    Localizing the injective, generalized Reedy, or projective model structure on $\SSets^{\Lambda}$ at the set $\Seg_\Lambda$ produces a left proper, combinatorial, simplicial model category whose fibrant objects are the cyclic Segal spaces that are fibrant in the original model structure.  
\end{theorem}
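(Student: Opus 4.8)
The plan is to apply Theorem \ref{thm:Bousfield_existence} to each of the three model structures in turn, and then to read off the fibrant objects of the resulting localizations from the preceding proposition.

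First I would verify that each of the injective, generalized Reedy, and projective model structures on $\SSets^{\Lambda}$ meets the hypotheses of Theorem \ref{thm:Bousfield_existence}. Left properness of all three was established in the corollary following Proposition \ref{prop:inj_proj_Reedy}. All three are cofibrantly generated --- standard for the injective and projective structures, and recorded with explicit generating sets above for the generalized Reedy structure --- while $\SSets^{\Lambda}$ is locally presentable because $\SSets$ is and $\Lambda$ is small; hence all three are combinatorial. Compatibility with the simplicial enrichment was checked for the generalized Reedy structure in the discussion above and is standard for the other two. Finally one must check that the sources of the generating cofibrations and generating trivial cofibrations are cofibrant: for the injective structure every object is cofibrant, so there is nothing to do; for the projective structure these sources have the form $F_c(A)$ with $A$ a cofibrant simplicial set and $F_c$ left adjoint to evaluation at an object of $\Lambda$, hence projectively cofibrant; and for the generalized Reedy structure it suffices to know that $\Lambda[n]$ and $\partial\Lambda[n]$ are cofibrant cyclic spaces --- which follows from standard properties of the generalized Reedy category $\Lambda$ in the sense of \cite{bm} --- since the displayed generating (trivial) cofibrations are pushout--products of maps of such cyclic spaces with (trivial) cofibrations of simplicial sets.

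With the hypotheses verified, Theorem \ref{thm:Bousfield_existence} produces, for each choice of underlying model structure $\mathcal M$, a left proper, combinatorial, simplicial left Bousfield localization $L_{\Seg_\Lambda}\mathcal M$. By the general theory of left Bousfield localization, the fibrant objects of $L_{\Seg_\Lambda}\mathcal M$ are precisely those objects that are fibrant in $\mathcal M$ and $\Seg_\Lambda$-local, so it remains only to identify the $\Seg_\Lambda$-local objects among the fibrant objects of each $\mathcal M$. For the injective structure this is exactly the preceding proposition. For the projective and generalized Reedy structures I would repeat its proof, using that $\Gamma(n)$ and $\Lambda[n]$ are cofibrant in those structures as well (being discrete cyclic spaces assembled from representables), so that for fibrant $X$ the derived mapping spaces $R\Map(\Gamma(n),X)$ and $R\Map(\Lambda[n],X)$ agree with the underived ones; representability of $\Lambda[n]$ and the fact that $\Map$ takes colimits in the first variable to limits then identify $\Seg_\Lambda$-locality with the cyclic Segal condition exactly as in the injective case. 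Since Proposition \ref{prop:inj_proj_Reedy} shows that an object fibrant in the projective or generalized Reedy structure is also injectively fibrant, the notion of ``cyclic Segal space'' coincides in all three settings, and the theorem follows.

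I expect the main obstacle to be the cofibrancy bookkeeping in the generalized Reedy setting: one must confirm both that the generating (trivial) cofibrations have cofibrant sources and that $\Gamma(n)$, $\Lambda[n]$, and $\partial\Lambda[n]$ are Reedy cofibrant, and since $\Lambda$ has nontrivial automorphisms the latching objects carry $C_{n+1}$-actions, so the relevant relative latching maps must be checked to be $C_{n+1}$-cofibrations and not merely cofibrations of underlying simplicial sets.
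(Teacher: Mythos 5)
Your proposal follows essentially the same route as the paper: verify the hypotheses of Theorem \ref{thm:Bousfield_existence} (left properness via the corollary to Proposition \ref{prop:inj_proj_Reedy}, combinatoriality, the simplicial enrichment, cofibrancy of the relevant sources) and then identify the fibrant objects of the localization as the $\Seg_\Lambda$-local fibrant objects using the preceding proposition; the paper's own proof is just this, stated in one line. The only point I would flag is your assertion that $\Gamma(n)$ is projectively cofibrant ``being assembled from representables'': a colimit of representables glued along maps that are not projective cofibrations need not be projectively cofibrant (discrete projectively cofibrant cyclic spaces are essentially retracts of coproducts of representables), so in the projective case one should instead argue via a cofibrant replacement of $\Gamma(n)\to\Lambda[n]$, or use that $\Seg_\Lambda$-locality is detected by derived mapping spaces and hence is independent of which of the three Quillen-equivalent structures one computes in.
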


\section{Cyclic 2-Segal spaces} \label{2Segalmcs}

We can analogously study models for cyclic 2-Segal spaces, i.e. cyclic spaces whose underlying simplicial space has the 2-Segal property property defined by Dyckerhoff and Kapranov in \cite[\S 2.3]{dk}. 

Let us start with the definition of 2-Segal spaces, which were independently introduced under the name \emph{decomposition spaces} by G\'alvez-Carrillo, Kock, and Tonks in \cite[\S 1.3]{gkt}.  The 2-Segal conditions generalize the Segal conditions, and there are multiple ways of seeing this generalization. 

Recall from the previous section that the Segal maps are induced by the spine inclusion $G(n)\to \Delta[n]$. We can view $G(n)$ as a triangulation of the interval with precisely $n+1$ vertices. Moving up one dimension we can consider polygons instead of intervals, and triangulations of these same with a fixed set of vertices. We can consider an $(n+1)$-gon $P_{n+1}$ and consider triangulations of $P_{n+1}$ which have the same vertices as $P_{n+1}$. In the case of the square, we have precisely two triangulations, as pictured in Figure \ref{fig:triang_squares}.
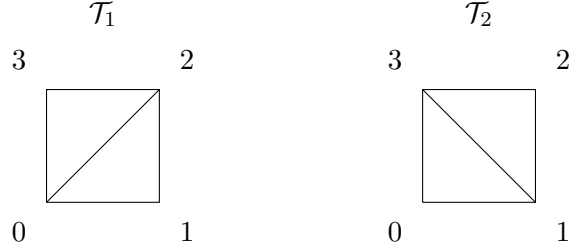
\begin{figure}
    \begin{tikzpicture}
		\draw (0,0) -- (1.5,0) -- (1.5,1.5) -- (0,1.5) -- (0,0); 
		\path (0,0) node[label=below left:$0$] {};
		\path (1.5,0) node[label=below right:$1$] {};
		\path (1.5,1.5) node[label=above right:$2$] {};
		\path (0,1.5) node[label=above left:$3$] {};
		\draw (0,0) to (1.5,1.5); 
		
		\path (0.75,2.5) node {$\mathcal{T}_1$};
		
		\begin{scope}[xshift=5cm]
			\draw (0,0) -- (1.5,0) -- (1.5,1.5) -- (0,1.5) -- (0,0); 
			\path (0,0) node[label=below left:$0$] {};
			\path (1.5,0) node[label=below right:$1$] {};
			\path (1.5,1.5) node[label=above right:$2$] {};
			\path (0,1.5) node[label=above left:$3$] {};
			\draw (1.5,0) to (0,1.5);
			\path (0.75,2.5) node {$\mathcal{T}_2$};
		\end{scope}
    \end{tikzpicture}
    \caption{The two triangulations of the square.} \label{fig:triang_squares}
\end{figure}

We can consider the triangles of these triangulations as 2-simplices in $\Delta[n]$, and thus consider $\mathcal{T}_1$ and $\mathcal{T}_2$ as simplicial subsets of $\Delta[n]$, consisting of those 2-simplices in the triangulation, together with their faces and degeneracies. As simplicial sets, the triangulations of the square are
\[ \begin{tikzcd}
    \mathcal T_1 \colon & 3 & 2 \arrow[l] & \mathcal T_2 \colon & 3 & 2 \arrow[l] \\ 
    & 0 \arrow[u] \arrow[r] \arrow[ur] & 1 \arrow[u] && 0 \arrow[u] \arrow[r] & 1. \arrow[u] \arrow[ul]
\end{tikzcd} \]
Treating these simplicial sets as discrete simplicial spaces, we can map into a fixed simplicial space $X$ to get maps 
\[  \begin{tikzcd}
    \Map_{\SSets^{\Deltaop}}(\mathcal T_1, K) &  \Map_{\SSets^{\Deltaop}}(\Delta[3], K) \arrow[l]\arrow[r] & \Map_{\SSets^{\Deltaop}}(\mathcal{T}_2, K) 
\end{tikzcd} \]
precisely as we did with $G(n)\to \Delta[n]$. These induced maps can be rewritten as 
\[ \begin{tikzcd}
    K_2 \times_{K_1} K_2 & K_3 \arrow[l]\arrow[r] & K_2 \times_{K_1} K_2.
\end{tikzcd} \]
Note that the two pullbacks here are different, since the simplicial subsets of $\Delta[3]$ used the define them are different, as indicated by the triangulations.

More generally, let us call a triangulation of an $(n+1)$-gon which has the same vertices as that $(n+1)$-gon a \emph{triangulation by vertices}. Then we can define a 2-Segal simplicial space as follows. 

\begin{definition}
    A \emph{2-Segal space} is a simplicial space $K$ such that for every $n \geq 3$ and every triangulation of a regular $(n+1)$-gon by vertices, the induced map
    \[ K_n \to \underbrace{K_2 \times_{K_1} \cdots \times_{K_1} K_2}_{n-1} \]
    is a weak equivalence of simplicial sets.
\end{definition}

Roughly speaking, the condition that $K$ be 2-Segal tells us that we can obtain the space of $n$-simplices of $K$ by gluing together 2-dimensional data according to a triangulation of a polygon. 

In parallel to this geometric characterization of the 2-Segal conditions, there is another, more algebraic interpretation. Compared with the up-to-homotopy categories provided by Segal spaces, we get a similar, but weaker structure from a 2-Segal space $K$. The simplicial space still provides spaces of ``objects'' $K_0$ and ``morphisms'' $K_1$, but if we consider the span
\[ \begin{tikzcd}
    K_1\times_{K_0}K_1 & K_2 \arrow[l,"{(d_2,d_0)}"'] \arrow[r,"{d_1}"] & K_1,
\end{tikzcd} \]
which in a Segal space would define the composition, we see that the map $(d_2,d_0)$ no longer need be a weak equivalence. However, this ``composition'' is still associative, at least up to homotopy, as can be seen from looking at the two inclusions of the triangulations of the square into $\Delta[3]$ above. 

As the preceding discussion suggests, the 2-Segal conditions do, indeed generalize the Segal conditions, as the following proposition makes precise. 

\begin{prop} \cite[Prop. 2.3.3]{dk} \label{prop:1-Seg_2-Seg}
    Every Segal space $K$ is a 2-Segal space.
\end{prop}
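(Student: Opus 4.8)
The plan is to translate the assertion into a statement about mapping complexes of simplicial spaces and then prove it by an induction on triangulations, peeling off one triangle at a time. Throughout we may and do assume that $K$ is Reedy fibrant (as is standard when asserting the $2$-Segal property; this ensures that for a monomorphism $A \hookrightarrow B$ of discrete simplicial spaces the restriction $\Map(B,K) \to \Map(A,K)$ is a fibration, so that $\Map(-,K)$ carries a pushout square of monomorphisms to a homotopy pullback square). Fix $n \ge 2$ and a triangulation $\mathcal{T}$ of the $(n{+}1)$-gon by vertices, regarded as the simplicial subset of $\Delta[n]$ spanned by its triangles. Since $\Map(-,K)$ sends colimits to limits and $\Map(\Delta[m],-) = (-)_m$, one has $\Map(\mathcal{T},K) \cong K_2 \times_{K_1} \cdots \times_{K_1} K_2$ (with $n-1$ factors, one for each triangle), and the $2$-Segal map attached to $\mathcal{T}$ is identified with the restriction $\Map(\Delta[n],K) \to \Map(\mathcal{T},K)$; so it is this map we must show is a weak equivalence.

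First I would note that the spine $G(n)$ is contained in $\mathcal{T}$, since every boundary edge of the polygon belongs to $\mathcal{T}$, and that the restriction $\Map(\Delta[n],K) \to \Map(G(n),K)$ is a weak equivalence --- this is exactly the Segal condition on $K$. Hence, by the two-out-of-three property, it suffices to prove:
\[ (\star)\qquad \text{for every triangulation }\mathcal{T}\text{ of an }(n{+}1)\text{-gon, the restriction }\Map(\mathcal{T},K)\to\Map(G(n),K)\text{ is a weak equivalence.} \]

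I would prove $(\star)$ by induction on $n$; for $n \le 2$ it is trivial (there $\mathcal{T} = \Delta[n]$). The inductive step rests on a combinatorial lemma: every triangulation of an $(n{+}1)$-gon with $n \ge 3$ has an \emph{interior ear}, i.e.\ a triangle $T = \Delta^{\{i-1,i,i+1\}}$ with $1 \le i \le n-1$ whose two non-diagonal edges are boundary edges. (Such a triangulation has at least two ears; at most one ear has apex $0$ and at most one has apex $n$, and if these were the only two ears their diagonals $\{1,n\}$ and $\{0,n-1\}$ would cross, which is impossible, so some ear has apex in $\{1,\dots,n-1\}$.) Deleting the vertex $i$ exhibits $\mathcal{T}$ as a pushout $\mathcal{T} = \mathcal{T}'\cup_{\Delta^{\{i-1,i+1\}}} T$, where $\mathcal{T}'$ triangulates the $n$-gon on $\{0,\dots,\widehat{i},\dots,n\}$; its spine $G'$ is obtained from $G(n)$ by replacing the two edges $\{i-1,i\}$, $\{i,i+1\}$ with the single edge $\{i-1,i+1\}$. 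Applying $\Map(-,K)$ to this pushout (a homotopy pushout of monomorphisms) together with the inductive hypothesis $\Map(\mathcal{T}',K)\xrightarrow{\ \sim\ }\Map(G',K)$ yields a weak equivalence $\Map(\mathcal{T},K)\xrightarrow{\ \sim\ }\Map\!\big(G'\cup_{\Delta^{\{i-1,i+1\}}} T,\,K\big) =: \Map(\mathcal{W},K)$. Finally $G(n)\subseteq\mathcal{W}$, and in fact $\mathcal{W} = G(n)\cup_{G(2)}T$, where $G(2)\subseteq T$ is the spine of $T$; so $\Map(\mathcal{W},K) = \Map(G(n),K)\times_{K_1\times_{K_0}K_1}K_2$, and the map $K_2\to K_1\times_{K_0}K_1$ occurring here is the (one-dimensional) Segal map, a weak equivalence. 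As the square is a homotopy pullback, $\Map(\mathcal{W},K)\to\Map(G(n),K)$ is a weak equivalence, and composing gives $(\star)$ at level $n$. Combined with the Segal condition and two-out-of-three, this shows $K$ is $2$-Segal.

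The main obstacle is bookkeeping rather than any conceptual difficulty: one must (i) prove the interior-ear lemma carefully, (ii) keep track of which edge of $\Delta[n]$ each factor of the various fibre products names --- in particular that removing an interior ear ``recomposes'' the two edges of $T$ into the diagonal of $G'$ exactly as the Segal condition for $K_2$ dictates --- and (iii) check that every pushout square to which $\Map(-,K)$ is applied is a pushout of monomorphisms, so that the resulting squares are genuine homotopy pullbacks and ``a weak equivalence on one corner forces a weak equivalence on the pullback'' applies. An alternative, less self-contained route would be to first invoke the reduction (from \cite{dk}) that it suffices to verify the $2$-Segal map for a single triangulation of each $(n{+}1)$-gon --- any two triangulations being connected by a sequence of diagonal flips, and a flip being a pushout along the two triangulations of the square --- and then run the same induction on the fan triangulation $\{\Delta^{\{0,i,i+1\}}\}_{i=1}^{n-1}$, whose fibre products are all taken over the edges $\{0,i\}$.
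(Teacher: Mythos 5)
Your proposal is correct and follows the same strategy the paper sketches: factor the Segal map as $\Map(\Delta[n],K)\to\Map(\Delta[\mathcal T],K)\to\Map(G(n),K)$ and use two-out-of-three, reducing to showing the right-hand restriction is an equivalence via the Segal conditions. You additionally supply the ear-peeling induction (and the interior-ear lemma and Reedy-fibrancy bookkeeping) that the paper omits by citing \cite[Prop.\ 2.3.3]{dk}, and those details check out.
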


While we do not replicate the proof here, we can comment on the key point. Given a triangulation by vertices $\mathcal{T}$ of an $(n+1)$-gon in the plane, and assuming the vertices of the $(n+1)$-gon are labeled counterclockwise by $0, \ldots, n$, we have inclusions of simplicial sets 
\[ \begin{tikzcd}
    G(n)\arrow[r] & \Delta[\mathcal{T}] \arrow[r] & \Delta[n] 
\end{tikzcd} \]  
that, in turn, induce maps 
\[ \begin{tikzcd}
    \Map(G(n),K) & \Map(\Delta[\mathcal{T}],K)\arrow[l] & \Map(\Delta[n],K), \arrow[l]
\end{tikzcd} \]
where $\Delta[\mathcal T]$ is the simplicial set corresponding to the triangulation $\mathcal T$, using the labeling of the vertices.  The right-hand map is the one we must show is an equivalence to show that $K$ is 2-Segal, and the composite is one of the Segal maps. The proof then proceeds by using the Segal conditions to show that the left-hand map is an equivalence.

With these preliminaries in mind, we can return to cyclic spaces. 

\begin{definition}
    A cyclic space $X \colon \Lambda^{\op}\to \SSets$ is \emph{2-Segal} if its underlying simplicial space $j^\ast X$ is a 2-Segal space.
\end{definition}

In \cite[\S 5]{dk}, Dyckerhoff and Kapranov describe a model structure for 2-Segal spaces, given by localizing the injective model structure on simplicial spaces at the set of maps 
\[ \begin{tikzcd}
\Delta[\mathcal{T}] \arrow[r] & \Delta[n]
\end{tikzcd} \]
for $n\geq 3$ and $\mathcal{T}$ a triangulation of the $(n+1)$-gon by vertices. The corresponding local objects are then precisely the injectively fibrant 2-Segal spaces. 

We can follow a similar procedure to describe cyclic 2-Segal spaces.  Let $\Lambda[\mathcal T]$ be the union of copies of $\Lambda[2]$ as prescribed by a triangulation of an $(n+1)$-gon for $n \geq 3$, which has an induced map to $\Lambda[n]$.  We localize the levelwise model structures with respect to all such maps to obtain model structures for cyclic 2-Segal objects. 

More formally, let $2\Seg_\Lambda$  denote the set of all morphisms of the form
\[ \Lambda[\mathcal{T}]\to \Lambda[n] \]
for $n\geq 3$ and $\mathcal{T}$ a triangulation of the $(n+1)$-gon by vertices. Applying Theorem \ref{thm:Bousfield_existence} as we did for Segal spaces then yields the desired model structures. 

\begin{theorem}
    Localizing the injective, generalized Reedy, or projective model structure on the category of cyclic spaces with respect to the set $2\Seg_\Lambda$ yields a left-proper, combinatorial, simplicial model category whose fibrant objects are the 2-Segal objects that are fibrant in the respective original model category.
\end{theorem}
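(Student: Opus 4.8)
The plan is to follow the same strategy used above for cyclic Segal spaces: realize each of the three desired model structures as a left Bousfield localization via Theorem~\ref{thm:Bousfield_existence}, and then identify the fibrant objects by a mapping-space computation. First I would verify the hypotheses of Theorem~\ref{thm:Bousfield_existence} for each of the injective, generalized Reedy, and projective model structures on $\SSets^{\Lambda}$. Combinatoriality holds because $\SSets^{\Lambda}$ is a category of presheaves on a small category, hence locally presentable, and each of the three structures is cofibrantly generated: for the Reedy and projective cases this is recorded in the propositions above, and for the injective case it is standard since $\SSets$ is combinatorial. All three structures are simplicial model categories, again as recorded above. Left properness was established for the projective and generalized Reedy structures in the corollary following Proposition~\ref{prop:inj_proj_Reedy}, and the injective structure is left proper because $\SSets$ is. Finally, the sources of the generating cofibrations and generating acyclic cofibrations are cofibrant: in the injective structure every object is cofibrant, while in the other two structures these generating maps are pushout-products of (acyclic) cofibrations of simplicial sets with Reedy cofibrations between cofibrant objects, so their sources are cofibrant by the pushout-product property recorded in the discussion preceding Proposition~\ref{prop:inj_proj_Reedy}.

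Granting these hypotheses, Theorem~\ref{thm:Bousfield_existence} produces the left Bousfield localization $L_{2\Seg_\Lambda}\mathcal{M}$ for $\mathcal{M}$ any of the three model structures, and it is again left proper, combinatorial, and simplicial. Its fibrant objects are precisely the $2\Seg_\Lambda$-local objects, so it remains to identify these. Exactly as in the Segal case, it suffices to treat the injective structure, since an object that is fibrant in the generalized Reedy or projective structure is in particular injectively fibrant, and the localized fibrant objects in each case are the original-fibrant $2\Seg_\Lambda$-local objects.

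The heart of the argument is then the following analogue of the proposition identifying $\Seg_\Lambda$-local objects: an injectively fibrant $X \in \SSets^{\Lambda}$ is $2\Seg_\Lambda$-local if and only if it is a cyclic $2$-Segal space. Since $X$ is injectively fibrant and every object is injectively cofibrant, we have $R\Map(-,X) \simeq \Map(-,X)$, so $X$ is $2\Seg_\Lambda$-local precisely when for every $n \geq 3$ and every triangulation $\mathcal{T}$ of the $(n+1)$-gon by vertices the map $\Map(\Lambda[n],X) \to \Map(\Lambda[\mathcal{T}], X)$ is a weak equivalence of simplicial sets. By the Yoneda lemma $\Map(\Lambda[k], X) \cong X_k$ for each $k$, and writing $\Lambda[\mathcal{T}]$ as the colimit of copies of $\Lambda[2]$ glued along $\Lambda[1]$'s and $\Lambda[0]$'s according to $\mathcal{T}$, the fact that $\Map(-,X)$ carries this colimit to the corresponding limit identifies $\Map(\Lambda[\mathcal{T}],X)$ with the iterated fiber product $\underbrace{X_2 \times_{X_1} \cdots \times_{X_1} X_2}_{n-1}$ from the definition of a $2$-Segal space, with the map out of $X_n$ being exactly the one induced by the triangulation. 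Note that this computation uses only the representability of $\Lambda[k]$ in $c\Set$, and not any compatibility of $j^\ast$ with representables; indeed $j^\ast \Lambda[n] \neq \Delta[n]$, but the indexing diagram for $\Lambda[\mathcal{T}]$ has the same shape as the one for $\Delta[\mathcal{T}]$ and the relevant face maps agree, so the maps in question are literally the $2$-Segal maps of $j^\ast X$. Hence $X$ is $2\Seg_\Lambda$-local if and only if $j^\ast X$ is a $2$-Segal space, i.e.\ if and only if $X$ is a cyclic $2$-Segal space; combined with Theorem~\ref{thm:Bousfield_existence} this completes the proof.

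As for the main obstacle: structurally the proof is essentially the same as in the cyclic Segal case, and the one genuinely new feature — that for each $n$ there are now several triangulations $\mathcal{T}$ rather than a single spine — causes no difficulty, since the localization and the local-object computation run over the whole set $2\Seg_\Lambda$ uniformly. The point requiring the most care is therefore the bookkeeping in the mapping-space computation: checking that the colimit presentation of $\Lambda[\mathcal{T}]$ really does have the same diagram shape as that of $\Delta[\mathcal{T}]$ and that $\Map(\Lambda[\mathcal{T}], X)$ is the fiber product over the correct face maps, so that $X$ being $2\Seg_\Lambda$-local reduces precisely to $j^\ast X$ satisfying the $2$-Segal conditions of the definition above.
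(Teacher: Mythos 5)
Your proposal is correct and follows essentially the same route as the paper, which simply invokes Theorem~\ref{thm:Bousfield_existence} together with the local-object computation already carried out verbatim in the cyclic Segal case. One small caution: the reduction ``Reedy or projectively fibrant $\Rightarrow$ injectively fibrant'' is stated backwards (the injective structure has the \emph{most} cofibrations, hence the \emph{fewest} fibrant objects, so the containment goes the other way); the paper makes the same slip, and it is harmless here because $\Lambda[n]$ and $\Lambda[\mathcal{T}]$ are projectively cofibrant, hence cofibrant in all three structures, so the underived mapping spaces compute the derived ones for $X$ fibrant in whichever structure is being localized and the identification of local objects goes through unchanged.
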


As with cyclic Segal spaces, we again can compare the resulting model structures. 

\begin{prop}
    The identity functors define left Quillen equivalences 
    \[ \begin{tikzcd} L_{2\Seg_\Lambda}\left(\SSets^{\Lambda}\right)^{\proj} \arrow[r,"\on{id}"] &  L_{2\Seg_\Lambda}\left(\SSets^{\Lambda}\right)^{\Reedy}\arrow[r,"\on{id}"] & L_{2\Seg_\Lambda}\left(\SSets^{\Lambda}\right)^{\inj}
    \end{tikzcd}\]
    between localized model structures. 
\end{prop}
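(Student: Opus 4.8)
The plan is to adapt the proof of Proposition~\ref{prop:inj_proj_Reedy} to the localized setting. Two facts make this possible: left Bousfield localization leaves the cofibrations unchanged, and the three model structures being localized are already related by identity Quillen equivalences before localization.

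First, by construction (via Theorem~\ref{thm:Bousfield_existence}) each localized model structure $L_{2\Seg_\Lambda}(\SSets^\Lambda)^{?}$, for $?\in\{\proj,\Reedy,\inj\}$, has the same cofibrations as the underlying structure $(\SSets^\Lambda)^{?}$. By Proposition~\ref{prop:inj_proj_Reedy}, the identity functors
\[ (\SSets^\Lambda)^{\proj}\to (\SSets^\Lambda)^{\Reedy}\to (\SSets^\Lambda)^{\inj} \]
preserve cofibrations; hence the identity functors between the corresponding localizations do as well.

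Next I would verify that the three localized model structures have the same class of weak equivalences, namely the $2\Seg_\Lambda$-local equivalences. Since the three unlocalized structures have the same (levelwise) weak equivalences and are Quillen equivalent via the identity functors (Proposition~\ref{prop:inj_proj_Reedy}), their derived mapping spaces $R\Map(-,-)$ agree up to natural weak equivalence. Consequently, any object that is $2\Seg_\Lambda$-local and injectively fibrant admits a projectively fibrant replacement that is again $2\Seg_\Lambda$-local, and similarly for the Reedy structure, so that mapping into the $2\Seg_\Lambda$-local objects detects the same class of morphisms in each of the three structures. Thus each localization is the left Bousfield localization, at the same set $2\Seg_\Lambda$, of a model structure with the same weak equivalences, and their weak equivalences coincide. (This is also an instance of the general principle that a Quillen equivalence localizes to a Quillen equivalence of localizations; cf.\ \cite{hirsch}.)

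Having matched cofibrations and weak equivalences, the identity functors are left Quillen functors between the localized structures; and since all three localized homotopy categories are the localization of $\SSets^\Lambda$ at the single class of $2\Seg_\Lambda$-local equivalences, the total left derived identities are equivalences of homotopy categories, so the identity functors are left Quillen equivalences. The one point requiring care---and the only real departure from a verbatim transcription of Proposition~\ref{prop:inj_proj_Reedy}---is the middle step, namely checking that passing to Bousfield localizations does not destroy the comparison, i.e.\ that the three resulting notions of ``$2\Seg_\Lambda$-local equivalence'' genuinely agree.
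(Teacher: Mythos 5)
Your proposal is correct and follows the argument the paper intends (the paper states this proposition without proof, deferring implicitly to Proposition \ref{prop:inj_proj_Reedy} and the general behavior of left Bousfield localization): cofibrations are unchanged by localization, and since the three unlocalized structures share the same underlying category and weak equivalences, their derived mapping spaces and hence their classes of $2\Seg_\Lambda$-local objects and local equivalences coincide. The one subtlety you flag is handled correctly, and can even be simplified: every injective fibration is a Reedy fibration is a projective fibration, so an injectively fibrant local object is already fibrant in the other two structures and no replacement is needed.
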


Moreover, Proposition \ref{prop:1-Seg_2-Seg} allows us to compare the 1-Segal and 2-Segal model structures. 

\begin{prop}
    Fixing one of the levelwise model structures on cyclic spaces, the identity functor defines left Quillen functors on the corresponding localizations 
    \[ \begin{tikzcd}
		\SSets^{\Lambda} \arrow[r] & L_{2\Seg_\Lambda} \left(\SSets^{\Lambda} \right) \arrow[r] & L_{\Seg_\Lambda} \left(\SSets^{\Lambda} \right).
	\end{tikzcd} \]
\end{prop}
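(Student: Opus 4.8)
The plan is to use that every functor in the display is the identity functor, so that each is left Quillen if and only if it preserves acyclic cofibrations; since left Bousfield localization leaves the cofibrations unchanged, this reduces to the assertion that the weak equivalences of the source lie among those of the target. For the first arrow $\SSets^{\Lambda}\to L_{2\Seg_\Lambda}(\SSets^{\Lambda})$ there is nothing to do: the weak equivalences of any of the three levelwise model structures are always among the $\mathcal{S}$-local equivalences of a left Bousfield localization, so the identity into a localization is always left Quillen. The entire content therefore lies in the second arrow.

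For the identity $L_{2\Seg_\Lambda}(\SSets^{\Lambda})\to L_{\Seg_\Lambda}(\SSets^{\Lambda})$ I would invoke the standard dictionary between local objects and local equivalences. Recall that the maps of a localizing set $\mathcal{S}$ are themselves $\mathcal{S}$-local equivalences, and that a map is an $\mathcal{S}$-local equivalence precisely when it is inverted by $R\!\Map(-,Y)$ for all $\mathcal{S}$-local $Y$. From these two facts it follows formally that the $2\Seg_\Lambda$-local equivalences are contained in the $\Seg_\Lambda$-local equivalences if and only if every $\Seg_\Lambda$-local object is $2\Seg_\Lambda$-local: if the inclusion of classes of objects holds, then any $2\Seg_\Lambda$-local equivalence is inverted by $R\!\Map(-,Y)$ for every $\Seg_\Lambda$-local $Y$, hence is a $\Seg_\Lambda$-local equivalence; conversely, a map in $2\Seg_\Lambda$ is a $2\Seg_\Lambda$-local equivalence, hence a $\Seg_\Lambda$-local equivalence, so $R\!\Map(-,Y)$ inverts it for $\Seg_\Lambda$-local $Y$, which says exactly that such $Y$ is $2\Seg_\Lambda$-local. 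It therefore suffices to show that every $\Seg_\Lambda$-local object is $2\Seg_\Lambda$-local.

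Here I would appeal to the identifications of local objects already obtained: a cyclic space that is fibrant in the chosen levelwise model structure is $\Seg_\Lambda$-local exactly when it is a cyclic Segal space, and $2\Seg_\Lambda$-local exactly when it is a cyclic $2$-Segal space (proved above for the injective structure, and identified the same way for the other two localizations; alternatively, since all three levelwise structures share the same weak equivalences, the class of $\mathcal{S}$-local equivalences is the same for all of them and one may check the inclusion once). By definition a cyclic space is Segal, respectively $2$-Segal, when its underlying simplicial space is; so what remains is precisely the statement that every Segal space is a $2$-Segal space, which is Proposition~\ref{prop:1-Seg_2-Seg}. Granting this, the second arrow is left Quillen, and hence so is the composite $\SSets^{\Lambda}\to L_{\Seg_\Lambda}(\SSets^{\Lambda})$.

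The main obstacle is this middle step, and in particular making sure the object-level characterizations of $\Seg_\Lambda$- and $2\Seg_\Lambda$-local objects are available uniformly across the injective, generalized Reedy, and projective base structures, rather than only in the injective case treated in the text. One can instead bypass this by arguing concretely that each generating map $\Lambda[\mathcal{T}]\to\Lambda[n]$ of $2\Seg_\Lambda$ is a $\Seg_\Lambda$-local equivalence: the cyclic spine $\Gamma(n)$ sits inside $\Lambda[\mathcal{T}]$ --- any triangulation of the $(n{+}1)$-gon by vertices contains all its boundary edges, in particular the chain $0\to1\to\cdots\to n$ --- and the composite $\Gamma(n)\to\Lambda[\mathcal{T}]\to\Lambda[n]$ is the spine inclusion, which lies in $\Seg_\Lambda$; two-out-of-three then reduces matters to showing $\Gamma(n)\to\Lambda[\mathcal{T}]$ is a $\Seg_\Lambda$-local equivalence, which one proves by induction on $n$, splitting the polygon at the triangle containing the closing boundary edge and using the case $\Gamma(2)\to\Lambda[2]$ together with left properness to control the gluings. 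Either way, the essential input is Proposition~\ref{prop:1-Seg_2-Seg}.
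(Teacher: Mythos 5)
Your proposal is correct and matches the paper's intent: the paper offers no written proof, merely noting that the proposition follows from Proposition~\ref{prop:1-Seg_2-Seg}, and your argument supplies exactly the standard reduction that is implicit there (identity into a localization is automatically left Quillen for the first arrow; for the second, the containment of $2\Seg_\Lambda$-local equivalences in $\Seg_\Lambda$-local equivalences is equivalent to every $\Seg_\Lambda$-local object being $2\Seg_\Lambda$-local, which is the statement that Segal implies 2-Segal). Your attention to the fact that the local-object characterizations were only verified for the injective structure, together with the observation that the local equivalences depend only on the shared class of weak equivalences, is a worthwhile point of care that the paper glosses over.
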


\end{document}